\newtheorem{thm}{Theorem}[section]
\newtheorem{prop}{Proposition}[section]
\newtheorem{lemma}{Lemma}[section]
\newtheorem{rem}{Remark}[section]
\newtheorem{rems}{Remarks}[section]
\newcommand{\R}{\mathbb{R}}             
\newcommand{\N}{\mathbb{N}}             
\newcommand{\Z}{\mathbb{Z}}             
\newcommand{\C}{\mathbb{C}}             
\newcommand{\e}{\epsilon}
\newcommand{\ds}{\displaystyle}
\newcommand{\Section}[1]{\section{#1} \setcounter{equation}{0}}
\begin{document}

\title{The anisotropic Calder\'{o}n problem on $3$-dimensional conformally St\"ackel manifolds}
\author{Thierry Daud\'e \footnote{Research supported by the ANR JCJC Horizons ANR-16-CE40-0012-01 and the \emph{Initiative d'excellence Paris-Seine}}, Niky Kamran \footnote{Research supported by NSERC grant RGPIN 105490-2018} and Francois Nicoleau \footnote{Research supported by the French GDR Dynqua}}





\maketitle


\begin{abstract} 

Conformally St\"ackel manifolds can be characterized as the class of $n$-dimensional pseudo-Riemannian manifolds $(M,G)$ on which the Hamilton-Jacobi equation $G(\nabla u, \nabla u) = 0$ for null geodesics and the Laplace equation $-\Delta_G \, \psi = 0$ are solvable by R-separation of variables. In the particular case in which the metric has Riemannian signature, they provide explicit examples of metrics admitting a set of $n-1$ \emph{commuting conformal symmetry operators} for the Laplace-Beltrami operator $\Delta_G$. In this paper, we solve the anisotropic Calder\'on problem on compact $3$-dimensional Riemannian manifolds with boundary which are conformally St\"ackel, that is we show that the metric of such manifolds is uniquely determined by the Dirichlet-to-Neumann map measured on the boundary of the manifold, up to diffeomorphims that preserve the boundary.

\end{abstract}


\vspace{1cm}

\noindent \textit{Keywords}. Inverse problem, anisotropic Calderon problem, conformally St\"ackel manifolds, fixed energy R-separation, Weyl-Titchmarsh function, complex angular momentum. \\


\noindent \textit{2010 Mathematics Subject Classification}. Primaries 81U40, 35P25; Secondary 58J50.


\tableofcontents


\Section{Introduction}

\subsection{A prototype of an inverse problem: the anisotropic Calder\'on problem}

The anisotropic Calder\'on problem, named after the seminal paper \cite{Cal1980} by Alberto Pedro Calder\'on, addresses the question of determining the anisotropic conductivity of a body (\textit{i.e.} a domain in $\R^n$) from current and voltage measurements made only \emph{on the boundary} of the body, up to a change of coordinates fixing the boundary. It is well known (see \cite{LeU1989}) that, in dimension three or higher, this problem can be reformulated in a geometric manner as the problem of determining the Riemannian metric of a compact connected Riemannian manifold with boundary from the Dirichlet-to-Neumann (DN) map (the quantities measured on the boundary of the manifold), up to diffeomorphisms fixing the boundary \footnote{Note that in dimension $2$, there exists another gauge invariance for this problem due to the covariance of the Laplace-Beltrami operator under conformal changes of the metric. The anisotropic Calder\'on problem amounts then to determining the Riemannian metric of a smooth compact connected Riemannian manifold with boundary from the DN map measured on the boundary of the manifold, up to diffeomorphisms fixing the boundary and/or a conformal change of the metric}. 

There has been an intense activity around the anisotropic Calder\'on problem in the last 30 years. In dimension $2$, a complete positive answer was given in \cite{LeU1989, LU2001} in the case of smooth metrics and in \cite{ALP2005} for $L^\infty$ metrics. In dimension $3$ or higher, the anisotropic Calder\'on problem has been solved positively for \emph{real-analytic metrics} in the serie of papers \cite{LeU1989, LU2001, LTU2003}. All these works use crucially the boundary determination results of \cite{LeU1989} (see also \cite{KY2002} for a local version), that is the fact that the DN map determines uniquely the metric and all its derivatives (including the normal ones) on the boundary of the manifold. Then, the real-analyticity of the metric allows one to extend the boundary determination of the metric to the whole manifold, up to natural gauge invariances. We also refer to the recent paper \cite{LLS2018} where a new proof of uniqueness in the Calder\'on problem for real-analytic metrics is given that uses a different approach. The anisotropic Calder\'on problem was also solved for Einstein metrics (which are real-analytic in the interior of the manifold) in \cite{GSB2009}. 

The anisotropic Calder\'on problem for \emph{smooth} metrics in dimension $n\geq 3$ remains however a major open problem even though some important results have been obtained in the last decade in \cite{DSFKSU2009, DSFKLS2016} for classes of smooth compact connected Riemannian manifolds with boundary that are \emph{conformally transversally anisotropic} (CTA), meaning that
$$
  M \subset \subset \R \times M_0, \quad G = c ( e \oplus g_0),
$$
where $(M_0,g_0)$ is a $n-1$ dimensional smooth compact connected Riemannian manifold with boundary, $e$ is the Euclidean metric on the real line and $c$ is a smooth strictly positive function in the cylinder $\R \times M_0$. It has been shown in \cite{DSFKSU2009}, Theorem 1.2 and Lemma 2.9, that CTA manifolds are characterized by the existence of a limiting Carleman weight \footnote{In fact, the precise result states that if an open manifold $(M,G)$ admits a limiting Carleman weight, then it must be locally a CTA manifold.} or equivalently, by the existence of a nontrivial conformal Killing vector field. 
The existence of a limiting Carleman weight can then be used to construct complex geometrical optics (CGO) solutions on the manifolds. Under some additional conditions on the transversal part $(M_0,g_0)$ of the cylinder (simplicity in \cite{DSFKSU2009} and injectivity of the geodesic ray-transform in \cite{DSFKLS2016}), CGO's technique can then be used to prove that the conformal factor $c$ is uniquely determined from the knowledge of the DN map. We refer to \cite{GT2013, KS2014, Sa2013, Uh2009, Uh2014} for surveys on the use of CGO's technique in the anisotropic Calder\'on problem. Finally, we mention the recent paper \cite{DSFKLLS2017} that provides a positive solution to the linearized Calder\'on problem on CTA manifolds under less restrictive conditions on the transversal part $(M_0,g_0)$.

Even though uniqueness is expected in the \emph{global} Calder\'on problem on \emph{smooth compact} connected Riemannian manifolds, some counterexamples to uniqueness are known in several cases where the above hypotheses fail. For instance, it is shown in \cite{LTU2003} that there exists a pair of compact and complete non-compact $2$-dimensional manifolds with boundary having the same DN map. This counterexample was obtained using a blow-up map. Some analogous non-uniqueness results for highly singular metrics on a compact manifold have been obtained in the study of invisibility phenomena in \cite{GLU2003, GKLU2007, GKLU2009a, GKLU2009b, DLU2017, DKN2018b}. The idea behind these cloaking devices is to hide an arbitrary object from measurements  by coating it with a meta-material that corresponds to a \emph{degenerate} Riemannian metric. Counterexamples to uniqueness in the case of \emph{H\"older continuous} metrics for a \emph{local} (meaning that the DN map is measured on a proper open subset of the boundary) Calder\'on problem has been obtained recently in \cite{DKN2019c} while several types of non-uniqueness results have been obtained in the Calder\'on problem with \emph{disjoint Dirichlet and Neumann data} in \cite{DKN2019a, DKN2019b, DKN2018a}. 

As conveyed by the title of this section, the anisotropic Calder\'on problem is the prototype of an inverse problem that shares many similarities with other important inverse problems; for instance, inverse scattering experiments on some non-compact Riemannian manifolds having ends where the question amounts to determining the metric from the knowledge of the scattering matrix at a fixed energy, the scattering matrix being understood as a kind of DN map measured at infinity (\textit{i.e.} the ends). Particularly close to the anisotropic Calder\'on problem on a compact manifold with boundary are the inverse scattering problems at fixed energy on \emph{asymptotically hyperbolic manifolds}, a class of manifolds in which the usual boundary is replaced by a conformal boundary of hyperbolic type. We refer to \cite{Iso2004a, Iso2004b, Iso2007a, Iso2007b} and especially the book \cite{IK2014}, Section 5.2. for the link between inverse boundary value problems and inverse scattering on asymptotically hyperbolic manifolds. Though a complete answer of the inverse scattering problem at a fixed energy is not known in that setting, some interesting and important results in that direction have been proved in \cite{JSB2000, SB2005, IK2014, IKL2017, HSB2015, HSB2018} for some general asymptotically hyperbolic manifolds. However, a complete positive answer was found in \cite{DN2011, DN2017, DGN2016, DKN2015, Go2018} for some Riemannian asymptotically hyperbolic manifolds or de-Sitter like black holes spacetimes having a sufficient amount of (hidden) symmetries. \\

In this paper, we wish to continue the series of works \cite{DN2011, DN2017, DGN2016, DKN2015, Go2018} by studying the anisotropic Calder\'on problem on \emph{conformally St\"ackel manifolds}, a class of $n$-dimensional completely integrable Riemannian manifolds with the property that the Laplace-Beltrami operator possess $n-1$ commuting second order conformal symmetry operators that allows to solve the corresponding Laplace PDE by separation of variables. Important to say in this introduction are the facts that :

\begin{itemize}
\item This class of $n$-dimensional conformally St\"ackel manifolds is rather large since we will show below that it depends locally on $n^2$ functions of one variable and a function of $n-1$ variables, precisely a function $\eta \in C^\infty(\partial M)$.

\item Moreover, it does not belong to the class of CTA manifolds that are characterized by the existence of a conformal Killing vector field. Instead, the class of conformally St\"ackel manifolds are "almost" characterized by the existence of $n-1$ independent \emph{conformal Killing tensors of rank 2} \footnote{A symmetric contravariant two-tensor $K=(K^{ij})$ is a conformal Killing tensor for the contravariant metric $G=(G^{ij})$ if there exists a vector field $X=(X^i)$ such that $[G,K] = 2 X \odot G$ where we denote by $[.,.]$ the Lie-Schouten bracket on contravariant symmetric tensors and by $\odot$ the symmetric tensor product.}. 

\item Since conformally St\"ackel manifolds are not CTA manifolds, we stress the fact that our proof of uniqueness in the Calder\'on problem does not use the usual CGO techniques but relies rather a mix of boundary determination results and what we would like to call the multi-parameter complex angular momentum (CAM) method, which is made possible thanks to the (hidden) symmetries of the manifolds under consideration. 

\end{itemize}

In order to keep the presentation of the model and of the ideas of the proof transparent, and to keep the notation as light as possible, we will now introduce conformally St\"ackel manifolds and solve the corresponding anisotropic Calder\'on problems in dimension three. Except for additional notational complexity, the extension to higher dimensions is in every regard identical.


\subsection{The model of conformally St\"ackel manifolds} 

We follow the presentation of conformally St\"ackel manifolds given in \cite{CR2006} (see also \cite{BCR2002a, BCR2002b, BCR2005}) and also refer to \cite{Ei1934, KM1980, KM1982, KM1983, KM1984, Mil1988, Rob1928, Sta1893} for other classical references in the variable separation theory. Even though the description of these models is only local in nature, we make it global by considering $\Omega$ to be a smooth compact connected three-dimensional manifold with smooth boundary having the global topology of a toric cylinder,
$$
 \Omega =[0,A] \times {\mathbb{T}}^{2}\,.
$$ 
Let us denote by $(x^1, x^2, x^3)$ a global coordinate system on $\Omega$ and note that the boundary $\partial \Omega$ of $\Omega$ has two connected components given by
$$
\partial \Omega = \Omega_{0}\cup \Omega_{1}\,,\quad \Omega_{0}=\{0\}\times  {\mathbb{T}}^{2}\,,\quad \Omega_{1}=\{A\}\times  {\mathbb{T}}^{2}\,.
$$
We equip the manifold $\Omega$ with a smooth Riemannian metric $G$ of the form  
\begin{equation} \label{MetricConf}
  G = c^{4} g = \sum_{i=1}^{3} H_{i}^{2}(dx^{i})^{2}\,.
\end{equation}
In the above expression, the Riemannian metric $g$ is a St\"ackel metric, that is  
\begin{equation} \label{MetricStackel}
  g = \sum_{i=1}^{3}{h}_{i}^{2}(dx^{i})^{2}\,, \quad h_{i}^{2}=\frac{\det S}{s^{i1}}\,,
\end{equation}
with $S$ being a St\"ackel matrix, that is a non-singular matrix of the form 
\begin{equation}\label{StackelMatrix}
S=\left(
\begin{matrix}
s_{11}(x^{1})&s_{12}(x^{1})&s_{13}(x^{1}) \\
s_{21}(x^{2})&s_{22}(x^{2}) & s_{23}(x^{2}) \\
s_{31}(x^{3})&s_{32}(x^{3})&s_{33}(x^{3})
\end{matrix}
\right)\,,
\end{equation}
and $s^{ij}$ denotes the cofactor of the component $s_{ij}$ of the matrix $S$. Observe that the diagonal components $h_{i}^{2}$ of the St\"ackel metric are given by the entries of the first column of the inverse St\"ackel matrix $A=S^{-1}$. Of course we demand the diagonal coefficients $h_i^2$ of the metric $g$ to be positive. 

Furthermore, the conformal factor $c^{4}$ is assumed to be a positive solution of the linear elliptic PDE on $\Omega$ given by:
\begin{equation} \label{PDEc}
-\Delta_{g}c - \sum_{i=1}^3 h_{i}^{2}\big(\phi_{i}+\frac{1}{4}\gamma_{i}^{2}-\frac{1}{2}\partial_{i}\gamma_{i}\big)c = 0\,.
\end{equation}
where $\Delta_g$ denotes the Laplace-Beltrami operator associated to the St\"ackel metric $g$ on $\Omega$, given in local coordinates by
$$
  -\Delta_g = -\frac{1}{\sqrt{|g|}} \ \sum_{i,j=1}^{3}\  \partial_i \left( \sqrt{|g|} g^{ij} \partial_j \right)\,,
$$
and
$$
\gamma_{i}:=-\partial_{i}\log \frac{h_{1}h_{2}h_{3}}{h_{i}^{2}}\,,
$$
are the contracted Christoffel symbols associated to $g$ and $\phi_{i}=\phi_{i}(x^{i})$ are arbitrary smooth functions of the indicated variable. 

We shall review in Section \ref{II1} the theory of variable separation on conformally St\"ackel manifolds for the Hamilton-Jacobi and Laplace equations, and recall some intrinsic characterizations of such manifolds in terms of the existence of conformal Killing tensors having certain properties. However, let us emphasize here the remarkable fact that all solutions of the Laplace equation 
\begin{equation} \label{Laplace}
  -\Delta_G \, \psi = 0, \quad \textrm{on} \ \Omega,
\end{equation}
can be written as an infinite (countable) linear superposition of functions of the form 
$$
  \psi = R(x^1,x^2,x^3) u, \quad u = u_1(x^1) u_2(x^2) u_3(x^3),
$$
for a well chosen factor $R$. This will be crucial in the later analysis. More precisely, we will show that any solution $\psi$ of (\ref{Laplace}) can be written as
\begin{equation} \label{Decomposition}
  \psi = R(x^1,x^2,x^3) \sum_{m=1}^\infty u_m(x^1) Y_m(x^2,x^3), \quad Y_m(x^2,x^3) = v_m(x^2) w_m(x^3), 
\end{equation}
such that, for a convenient choice of factor $R$, each $u_m, v_m, w_m$ satisfies the coupled separated ODEs :
\begin{equation} \label{Separateda}
	-u_m'' + [\,\mu_m^2 s_{12}(x^1) + \nu_m^2 s_{13}(x^1) - \phi_1(x^1)] \, u_m = 0, 
\end{equation}
\begin{equation} \label{Separatedb}
	-v_m'' + [\,\mu_m^2 s_{22}(x^2) + \nu_m^2 s_{23}(x^2) - \phi_2(x^2)] \, v_m = 0, 
\end{equation}
\begin{equation} \label{Separatedc}	
	-w_m'' + [\,\mu_m^2 s_{32}(x^3) + \nu_m^2 s_{33}(x^3) - \phi_3(x^3)] \, w_m = 0. 
\end{equation}
Here the constants of separation $(\mu_m^2, \nu_m^2)$ can be understood as the joint spectrum of the commuting elliptic selfadjoint operators $(H,L)$ on $\mathbb{T}^2$ defined by :
\begin{equation} \label{HL}
  \left( \begin{array}{c} H \\ L \end{array} \right) = \frac{1}{s^{11}} \left( \begin{array}{cc} -s_{33} & s_{23} \\ s_{32} & -s_{22} \end{array} \right) \left( \begin{array}{c} A_2 \\ A_3 \end{array} \right),
\end{equation} 
where for all $j=1,2,3$, we set :
\begin{equation} \label{A}
  A_j = -\partial_{j}^2 - \phi_j(x^j). 
\end{equation}
The common eigenfunctions of $(H,L)$ take the form $Y_{m} = v_m(x^2) w_m(x^3)$ and satisfy (by definition) :
\begin{equation} \label{EigenHL}
  H Y_m = \mu_m^2 Y_m, \quad L Y_m = \nu_m^2 Y_m, \quad \forall m \geq 1. 
\end{equation}
Finally, the eigenfunctions $Y_m$ form a Hilbert basis of $L^2(\mathbb{T}^2)$ in the following sense :
\begin{equation} \label{HB}
  L^2(\mathbb{T}^2 \,; s^{11} dx^2 dx^3) = \bigoplus_{m \geq 1} \langle Y_m \rangle. 
\end{equation}
The proof of the above statement will be given at the beginning of Section \ref{II2}. As a consequence, we will be able to show that the DN map possesses a very special structure. Precisely, we will show that the DN map can be "almost" diagonalized onto the Hilbert basis $(Y_m)_{m \geq 1}$ as follows. First recall that the boundary of the cylinder $\Omega$ has two connected components $\Omega_0$ and $\Omega_1$ both isomorphic to $\mathbb{T}^2$. We thus identify the Sobolev spaces $H^s(\partial \Omega), \ s \in \R$, with
$$
  H^s(\partial \Omega) = H^s(\Omega_0) \oplus H^s(\Omega_1), \quad H^s(\Omega_j) \simeq H^s(\mathbb{T}^2), \ j=0,1, 
$$
and use a $2 \times 2$-matrix notation for the DN map $\Lambda_G\,: \, H^{\frac{1}{2}}(\partial \Omega) \longrightarrow H^{-\frac{1}{2}}(\partial \Omega)$, \textit{i.e.}
$$
  \Lambda_G = \left( \begin{array}{cc} \Lambda_{G,\Omega_0, \Omega_0} & \Lambda_{G,\Omega_0, \Omega_1} \\ \Lambda_{G,\Omega_1, \Omega_0} & \Lambda_{G,\Omega_1, \Omega_1} \end{array} \right). 
$$ 
Here the operators $\Lambda_{G,\Omega_i, \Omega_j}\, : \, H^{\frac{1}{2}}(\mathbb{T}^2) \longrightarrow H^{-\frac{1}{2}}(\mathbb{T}^2)$ correspond to the DN map when the Dirichlet data are imposed on $\Omega_i$ and the Neumann data are measured on $\Omega_j$. From (\ref{HB}), we see that for $s \geq 0$, any element of the Sobolev spaces $H^s(\Omega_j), \ j=0,1$, can be decomposed onto the Hilbert basis $(Y_m)_{m \geq 1}$. With these notations, we will show that the DN map on a conformally St\"ackel cylinder has the following structure : 
\begin{equation} \label{DNStructure}
  \Lambda_G = \left( \begin{array}{cc} \frac{-1}{H_1(0,x^2,x^3)} & 0 \\ 0 & \frac{1}{H_1(A,x^2,x^3)} \end{array} \right) \Bigg[ 
	            \left( \begin{array}{cc} \frac{\Gamma_1(0,x^2,x^3)}{2} & 0 \\ 0 & \frac{\Gamma_1(A,x^2,x^3)}{2} \end{array} \right) \hspace{2cm}
\end{equation}
$$							
	\hspace{2cm}	+ \ \left( \begin{array}{cc} R(0,x^2,x^3) & 0 \\ 0 & R(A,x^2,x^3) \end{array} \right) A_G \left( \begin{array}{cc} \frac{1}{R(0,x^2,x^3)} & 0 \\ 0 & \frac{1}{R(A,x^2,x^3)} \end{array} \right) \Bigg]
$$
where 
$$
\Gamma_{i}:=-\partial_{i}\log \frac{H_{1}H_{2}H_{3}}{H_{i}^{2}}\,, \ \ i=1,2,3, 
$$
are the contracted Christoffel symbols associated to the conformally St\"ackel metric $G$ and where the operator $A_G$ is completely diagonalizable onto the Hilbert basis $(Y_m)_{m \geq 1}$, its restriction on $\langle Y_m \rangle$ being defined by : 
\begin{equation} \label{AG}
  (A_G)_{|\langle Y_m \rangle} := \left( \begin{array}{cc} M(\mu_m^2,\nu_m^2)  & \frac{1}{\Delta(\mu_m^2,\nu_m^2)} \\ \frac{1}{\Delta(\mu_m^2,\nu_m^2)} & N(\mu_m^2,\nu_m^2) \end{array} \right).
\end{equation}
Finally, the function $\Delta(\mu^2,\nu^2)$ and the functions $M(\mu^2,\nu^2)$ and $N(\mu^2,\nu^2)$ are the (not so classical) \emph{characteristic} and \emph{Weyl-Titchmarsh} (WT) functions associated to the radial ODE (\ref{Separateda}) with Dirichlet boundary conditions. We emphasize the worlds \emph{not so classical} since the characteristic and WT functions depend on the \emph{two} spectral parameters $\mu^2,\nu^2$ which appear as the constants of separation in the variable separation procedure. 

The construction and the explanation of this special structure of the DN map as well as a review of the elementary properties of the characteristic and WT functions will be done in Section \ref{II2}. 

For the moment, since we are interested in the anisotropic Calder\'on problem on conformally St\"ackel manifolds, let us simply (and formally) count the number of unknown functions defining them. A priori, a St\"ackel metric $g$ depends on nine functions $s_{ij}(x^i)$ of one variable while the conformal factor $c$ depends on three additional unknown functions $\phi(x^i)$ through the Laplace type pde (\ref{PDEc}). Let us choose (this is always possible) the functions $\phi_i$ in such a way that the zero-order term be nonnegative, \textit{i.e.}
\begin{equation} \label{H1c}
- \sum_{i=1}^3 h_{i}^{2}\big(\phi_{i}+\frac{1}{4}\Gamma_{i}^{2}-\frac{1}{2}\partial_{i}\Gamma_{i}\big) \geq 0,
\end{equation}
and solve the Dirichlet problem
\begin{equation} \label{DirichletPbc}
  \left\{ \begin{array}{rcl} 
	-\Delta_{g}c - \sum_{i=1}^3 h_{i}^{2}\big(\phi_{i}+\frac{1}{4}\Gamma_{i}^{2}-\frac{1}{2}\partial_{i}\Gamma_{i}\big)c & = & 0 \quad \textrm{on} \ \ \Omega, \\
	c & = & \eta, \quad \textrm{on} \ \ \partial \Omega. 
	\end{array} \right. 
\end{equation}
According to the maximum principle \cite{Ev2010, Tay2011a}, for any positive boundary function $\eta$ on $\partial \Omega$, there exists a unique positive solution $c$ of (\ref{DirichletPbc}). We conclude that the conformal factor $c$ depends roughly speaking on three unknown functions $\phi_i(x^i)$ of one variable (that satisfiy (\ref{H1c})) and a positive function $\eta \in C^\infty(\partial \Omega)$.  

This makes twelve unknown functions of one variable and one unknown function of two variables for the metric $g$. Nevertheless, it is possible to remove three of the unknown functions of one variable by a simple change of coordinates that preserves the conformally St\"ackel structure. Indeed, given positive functions $f_i(x^i)$, define the new variables $y^i$ by : 
\begin{equation} \label{CV}
  y^i = \int_0^{x^i} \sqrt{f_i(s)} ds. 
\end{equation}
Then the new metric $\bar{G}$ is given 
\begin{equation} \label{NewG}
  \bar{G} = \bar{c}^4 \bar{g},
\end{equation}
where $\bar{g}$ is the metric 
\begin{equation} \label{Newg}
  \bar{g} = \sum_{i=1}^{3} \bar{h}_{i}^{2}(dy^{i})^{2}\,, \quad \bar{h}_{i}^{2} = \frac{h_i^2(x^i(y^i))}{f_i(x^i(y^i))},
\end{equation}
which can be shown to be a St\"ackel metric
\begin{equation} \label{Newgs}
  \bar{g} = \sum_{i=1}^{3} \bar{h}_{i}^{2}(dy^{i})^{2}\,, \quad \bar{h}_{i}^{2} = \frac{\det \bar{S}}{\bar{s}^{i1}}\,,
\end{equation}
associated to the new St\"ackel matrix
\begin{equation} \label{NewS}
  \bar{S} = \left( \bar{s}_{ij}(y^i) \right)_{1 \leq i,j \leq 3} := \left( \frac{s_{ij}(x^i(y^i))}{f_i(x^i(y^i))} \right)_{1 \leq i,j \leq 3}.
\end{equation}
In other words, the change of variables (\ref{CV}) amounts to dividing each line of the initial St\"ackel matrix by the functions $f_i$, a step which allows us to remove one unknown function in each variable $x^i$. Note finally that the conformal factor $\bar{c}$ now satisfies : 
$$
-\Delta_{\bar{g}} \bar{c} - \sum_{i=1}^3 \bar{h}_{i}^{2}\big( \bar{\phi}_{i} + \frac{1}{4}\bar{\gamma}_{i}^{2} - \frac{1}{2}\partial_{i}\bar{\gamma}_{i} \big) \bar{c} = 0\,,
$$
where the arbitrary functions $\bar{\phi_i} = \bar{\phi_i}(y^i)$ are given by
\begin{equation} \label{NewPhii}
  \bar{\phi}_i = \frac{\phi_i}{f_i} - \frac{(\dot{\log f_i})^2}{16} - \frac{(\ddot{\log f_i})}{4},
\end{equation}
(here the dot denotes the derivative with respect to $y^i$) and the $\bar{\gamma}_i$ are the contracted Christoffel symbols associated to the metric $\bar{g}$. In conclusion, we deduce that a conformally St\"ackel metric effectively depends on $9 = 3^2$ unknown functions of one variable and one positive function $\eta \in C^\infty(\partial \Omega)$ of $2$ variables.


\subsection{The results} 

We will study the anisotropic Calder\'on problem in the class of smooth compact connected Riemannian manifolds with boundary $(M,G)$ that are embedded in a conformally St\"ackel cylinder $\Omega$, \textit{i.e.} we will consider $(M,G)$ where
\begin{equation} \label{Embed}
  M \subset \subset \Omega = [0,A] \times \mathbb{T}^2,
\end{equation}
and $G$ is a Riemannian metric on $M$ that possesses a smooth extension (still denoted by $G$) to the whole cylinder $\Omega$ given by (\ref{MetricConf}) - (\ref{StackelMatrix}). 

Let us consider the corresponding Dirichlet problem
\begin{equation} \label{Eq000}
  \left\{ \begin{array}{cc} -\Delta_G\, u = 0, & \textrm{on} \ M, \\ u = f, & \textrm{on} \ \partial M. \end{array} \right.
\end{equation}
It is well known \cite{Sa2013} that, for any $f \in H^{1/2}(\partial M)$, there exists a unique weak solution $u \in H^1(M)$ of the Dirichlet problem (\ref{Eq000}). So, we can define the Dirichlet-to-Neumann map as the operator $\Lambda_{G}$ from $H^{1/2}(\partial M)$ to $H^{-1/2}(\partial M)$ given by
\begin{equation} \label{DN-Abstract}
  \Lambda_{G} (f) = \left( \partial_\nu u \right)_{|\partial M}.
\end{equation}
Here, $u$ is the unique solution of (\ref{Eq000}) and $\left( \partial_\nu u \right)_{|\partial M}$ is its normal derivative with respect to the unit outer normal $\nu$ on $\partial M$. Note that this normal derivative has to be understood in the weak sense as an element of $H^{-1/2}(\partial M)$ via the bilinear form
$$
  \left\langle \Lambda_{G} (f) \, , h \right \rangle = \int_M \langle du, dv \rangle_G \  \textrm{dVol}_G,
$$
where $f, h  \in H^{1/2}(\partial M)$, $u$ is the unique solution of the Dirichlet problem (\ref{Eq000}), and where $v$ is any element of $H^1(M)$ such that $v_{|\partial M} = h$. Of course, when $f$ is sufficiently smooth, this definition coincides with the usual one in local coordinates, that is
\begin{equation} \label{DN-Coord}
\partial_\nu u = \sum_i \nu^i \partial_i u.
\end{equation}
Finally, we will use the notations $\Lambda_G = \Lambda_{G,M}$ and $\Lambda_{G,\Omega}$ to distinguish between the DN map measured on $M$ and $\Omega$ respectively.

Let us now formulate our main result. 

\begin{thm} \label{Main}
  Let $(M,G)$ and $(M,\tilde{G})$ be two conformally St\"ackel manifolds satisfying (\ref{MetricConf}) - (\ref{StackelMatrix}) and (\ref{Embed}). We will add a subscript $\ \tilde{}$ to all the quantities related to $(M,\tilde{G})$. Assume that
$$
  \Lambda_G = \Lambda_{\tilde{G}}.
$$	
Then there exists a diffeomorphism $\varphi: \, M \longrightarrow \, M$ with $\varphi_{|\partial M} = Id$ whose pull-back satisfies
$$
  \tilde{G} = \varphi^* G, 
$$
\end{thm}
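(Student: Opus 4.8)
The plan is to reduce the anisotropic Calder\'on problem on $(M,G)$ to the study of a family of one-dimensional inverse spectral problems governed by the separated radial ODE \eqref{Separateda}, exploiting the special structure \eqref{DNStructure}--\eqref{AG} of the DN map. First I would use the boundary determination results of \cite{LeU1989} (together with the known formula \eqref{DN-Coord}): since $\Lambda_G = \Lambda_{\tilde G}$, the Taylor expansions of $G$ and $\tilde G$ at $\partial M$ coincide up to a boundary-preserving diffeomorphism, so after composing $\tilde G$ with such a diffeomorphism we may assume $G$ and $\tilde G$, together with all their normal derivatives, agree on $\partial M$. By a standard argument (e.g.\ as in \cite{LU2001}) this allows us to glue and extend both metrics to the \emph{same} conformally St\"ackel cylinder $\Omega$, and to compare the DN maps $\Lambda_{G,\Omega}$ and $\Lambda_{\tilde G,\Omega}$ on the full toric cylinder; the point is that the interior of $M$ already determines the restriction of the cylinder DN map on the relevant portion. (This extension step is where I expect to pay some technical price; see below.)

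Next I would bring in the separation of variables. The operators $(H,L)$ of \eqref{HL} are built from the boundary data $s_{ij}$, $\phi_i$ of the transversal directions; since $\Lambda_G = \Lambda_{\tilde G}$ implies, via the diagonalization \eqref{DNStructure}, that the two DN maps are simultaneously "almost diagonalized", one first shows that the joint spectra $\{(\mu_m^2,\nu_m^2)\}$ and $\{(\tilde\mu_m^2,\tilde\nu_m^2)\}$ coincide and that the Hilbert bases $(Y_m)$ and $(\tilde Y_m)$ of \eqref{HB} can be matched — this is essentially the statement that the elliptic operators $(H,L)$ on $\mathbb{T}^2$ are isospectral with a common eigenbasis, which pins down the transversal St\"ackel data up to the residual gauge of \eqref{CV}. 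Once this is done, on each joint eigenspace $\langle Y_m\rangle$ the matrix \eqref{AG} is determined, hence the characteristic function $\Delta(\mu_m^2,\nu_m^2)$ and the Weyl--Titchmarsh functions $M(\mu_m^2,\nu_m^2)$, $N(\mu_m^2,\nu_m^2)$ of the radial ODE \eqref{Separateda} are known for the infinite family of parameter values $(\mu_m^2,\nu_m^2)$.

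Here is where the multi-parameter complex angular momentum (CAM) method enters. The functions $\mu^2 \mapsto M(\mu^2,\nu^2)$, etc., extend holomorphically (with controlled growth) in the spectral parameters, so knowing them on the infinite discrete set $\{(\mu_m^2,\nu_m^2)\}$ — which, by a Weyl-law / counting argument, accumulates densely enough in suitable complex sectors — forces them to agree as meromorphic functions of two complex variables by a Carlson-type uniqueness theorem. One then invokes the classical Borg--Marchenko / Weyl--Titchmarsh inverse uniqueness for the one-dimensional Schr\"odinger operator in \eqref{Separateda}: the full WT function as a function of the spectral parameter determines the potential $\mu^2 s_{12}(x^1) + \nu^2 s_{13}(x^1) - \phi_1(x^1)$, and letting $\mu^2,\nu^2$ vary separates out $s_{12}$, $s_{13}$ and $\phi_1$ individually. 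Combined with the transversal reconstruction of the previous step and the boundary normalization, this recovers all of $G$ up to the coordinate change \eqref{CV}, i.e.\ up to a boundary-fixing diffeomorphism, which is exactly the asserted conclusion $\tilde G = \varphi^* G$.

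The main obstacle, as I see it, is twofold and both pieces are genuinely delicate rather than routine. The first is justifying the passage from "the DN map of $M$ coincides" to "the DN map of the extended cylinder $\Omega$ coincides on the transversal spectral data" — one must be careful that extending $G$ past $\partial M$ is done consistently for both metrics and that the interior measurements on $M$ actually control the cylinder Weyl--Titchmarsh functions; the analytic continuation in the $x^1$ variable of the radial solutions (away from $M$ towards the full interval $[0,A]$) has to be handled, and this is presumably where hypothesis \eqref{H1c} and the maximum principle for \eqref{DirichletPbc} are used to control $c$. The second is the two-variable Carlson/Phragm\'en--Lindel\'of step: one needs sharp exponential-type bounds on $M(\mu^2,\nu^2)$, $\Delta(\mu^2,\nu^2)$ jointly in both parameters, together with a sufficiently good lower bound on the density of the joint spectrum $\{(\mu_m^2,\nu_m^2)\}$ in the relevant complex directions, to conclude the uniqueness of the holomorphic extension — this interplay between the two separation constants is exactly the "\emph{not so classical}" feature emphasized in the introduction and is the technical heart of the multi-parameter CAM argument.
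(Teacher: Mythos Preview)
Your outline correctly captures Steps~1--3 of the paper's strategy (extension to the cylinder, boundary determination plus matching of the joint spectrum and eigenbasis, then the two-variable CAM/Borg--Marchenko argument for the radial data), but there is a genuine gap at the end: after those three steps you have \emph{not} recovered all of $G$. The separated ODEs \eqref{Separateda}--\eqref{Separatedc} and the operators $(H,L)$ involve only the last two columns of the St\"ackel matrix and the $\phi_i$; the first column $(s_{11},s_{21},s_{31})$ and the conformal factor $c$ never appear there. So at the end of your Step~3 you know $g_0 = \sum_i (s^{i1})^{-1}(du^i)^2$ (since the cofactors $s^{i1}$ depend only on the last two columns), but the scalar $\alpha = c^4\det S$ --- which carries both $c$ and the first column --- is still undetermined. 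Your sentence ``this recovers all of $G$ up to the coordinate change \eqref{CV}'' is therefore not justified.

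The paper closes this gap with a fourth, independent step (Section~\ref{III4}): one checks that the generalized Robertson condition \eqref{GenRob}, rewritten in terms of $\alpha$, is a \emph{linear} second-order elliptic PDE
\[
  -\Delta_{g_0}\alpha - Q_{g_0,\bar\phi_i}\,\alpha = 0,
\]
whose coefficients depend only on $g_0$ and the $\bar\phi_i$, all of which have already been shown to coincide for $G$ and $\tilde G$. Hence $\alpha$ and $\tilde\alpha$ satisfy the \emph{same} elliptic equation, and the boundary determination results (equalities \eqref{c9}, \eqref{d4}) give matching Cauchy data at $u^1=0$; classical unique continuation then forces $\alpha=\tilde\alpha$ on $\Omega$. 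Note also that hypothesis \eqref{H1c} and the maximum principle for \eqref{DirichletPbc} are used only to guarantee the \emph{existence} of positive conformal factors in the model --- they play no role in the uniqueness argument, contrary to what you suggest in your discussion of obstacles. Finally, the extension to the cylinder is done via a direct transmission/gluing argument for harmonic functions (Theorem~\ref{ExtensionThm}), not by analytically continuing radial solutions in $x^1$.
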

Let us make some comments on this result.

\begin{itemize}
\item [\textbf{1.}] The diffeomorphim $\varphi$ appearing in the statement of the main Theorem is simply a change of variables of the special form (\ref{CV}) that preserves the structure of conformally St\"ackel manifolds. 

\item [\textbf{2.}] Theorem \ref{Main} solves positively the uniqueness issue in the Calder\'on problem on conformally St\"ackel manifolds. It is an extension of the results in \cite{Go2018} where the inverse scattering problem at a fixed energy on St\"ackel asymptotically hyperbolic manifolds was considered. One of the main differences between the model in \cite{Go2018} and our model is the fact that in \cite{Go2018} the conformal factor $c$ is assumed to be identically $1$ and the PDE (\ref{PDEc}) on the conformal factor $c$ is then replaced by the so called Robertson conditions $\partial_j \gamma_i = 0, \ \forall 1 \leq i \ne j \leq 3$ (see \cite{Rob1928}). Note that under these hypotheses, the PDE (\ref{PDEc}) is trivially satisfied. The Robertson conditions restrict the class of St\"ackel metrics drastically and this is the sense in which our result extends \cite{Go2018}. We refer to \cite{Go2018}, Example 1.2, for a list of examples of St\"ackel metrics satisfying the Robertson conditions. 

\item [\textbf{3.}]  As already mentioned, conformally St\"ackel manifolds aren't generically CTA manifolds. It would be the case however if one of the line in the St\"ackel matrix $S$ was a line of constant functions. Assume for instance that the $s_{1j}$ are constants for all $j=1,2,3$. Then $\partial_{x^1}$ is a Killing vector field for the St\"ackel metric $g$ and thus a conformal Killing vector field for the metric $G$. Hence $(M,G)$ would lie within the class of CTA manifolds. This is clear if we notice that the metric $G$ can then be written as:
$$
  G = \left( c^4 \frac{\det(S)}{s^{11}} \right) \left[ (dx^1)^2 + g_0 \right], \quad g_0 = \frac{s^{11}}{s^{21}}(dx^2)^2 + \frac{s^{11}}{s^{31}}(dx^3)^2. 
$$
Even in that case however, we could not apply directly the results of \cite{DSFKSU2009, DSFKLS2016} since the injectivity of the  geodesic X-ray transform on the \emph{closed} transversal manifold 
$$
  (M_0,g_0) = (\mathbb{T}^2, g_0),
$$
is not guaranteed in general. 
\end{itemize}


The proof of Theorem \ref{Main} will be divided in four steps. \\

\noindent \textit{Step 1. Extension to the whole cylinder $\Omega$.} Note first that the Laplace equation $-\Delta_G \,\psi = 0$ on $M$ is usually not separable since the boundary $\partial M$ need not be compatible with variable separation, unlike the case on the whole cylinder $(\Omega,G)$. Hence we cannot use a priori the form (\ref{Decomposition}) for the solutions of the Laplace equation as well as the structure (\ref{DNStructure}) of the DN map. However we can reduce the Calder\'on problem on $(M,G)$ to the Calder\'on problem on the extended cylinder $(\Omega,G)$ by the following result which is similar to the corresponding results on asymptotically hyperbolic manifolds from \cite{IK2014}, chapter 5, Theorems 2.3 and 4.6. 

\begin{thm} \label{ExtensionThm}
  Let $M_1 \subset \subset M_2$ be two smooth compact connected manifolds with boundary. Let $G$ and $\tilde{G}$ be two Riemannian metrics on $M_2$ such that $G = \tilde{G}$ on $M_2 \setminus M_1$. Denote by $\Lambda_{G,j}$ the DN map associated to $G$ on $M_j$ for $j=1,2$. Then 
$$
  \Lambda_{G,1} = \Lambda_{\tilde{G},1} \ \ \Longrightarrow \ \ \Lambda_{G,2} = \Lambda_{\tilde{G},2}. 	
$$	
\end{thm}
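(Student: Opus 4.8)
The statement is a "monotonicity/localization" property of the DN map: enlarging the manifold from $M_1$ to $M_2$ (keeping the metric unchanged on the collar $M_2\setminus M_1$) preserves equality of DN maps. The natural strategy is to show that $\Lambda_{G,2}$ is *determined* by the pair $(\Lambda_{G,1}, G|_{M_2\setminus M_1})$, and since $G=\tilde G$ on $M_2\setminus M_1$ and $\Lambda_{G,1}=\Lambda_{\tilde G,1}$ by hypothesis, the conclusion follows. Concretely, I would take $f\in H^{1/2}(\partial M_2)$, let $u$ solve $-\Delta_G u=0$ on $M_2$ with $u|_{\partial M_2}=f$, and try to reconstruct $u$ on $M_2$ from the data. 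The idea is to solve the problem on the collar region $N:=M_2\setminus M_1^{\circ}$ (whose boundary is $\partial M_2 \sqcup \partial M_1$), coupling it to the interior via the DN map $\Lambda_{G,1}$ as a nonlocal boundary condition on $\partial M_1$.

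First I would set up the transmission problem on the collar $N$: find $w$ with $-\Delta_G w = 0$ on $N$, $w|_{\partial M_2}=f$, and the matching condition on $\partial M_1$ that the Neumann data of $w$ from the collar side agrees with $\Lambda_{G,1}(w|_{\partial M_1})$ — i.e. $\partial_\nu^{N} w|_{\partial M_1} = -\Lambda_{G,1}(w|_{\partial M_1})$ (sign according to orientation of normals). This is a well-posed elliptic problem because $\Lambda_{G,1}$ is a nonnegative, self-adjoint pseudodifferential operator of order one on $\partial M_1$, so the associated bilinear form $a(w,v)=\int_N\langle dw,dv\rangle_G\,d\mathrm{Vol}_G + \langle \Lambda_{G,1}(w|_{\partial M_1}), v|_{\partial M_1}\rangle$ is coercive on the subspace of $H^1(N)$ with $w|_{\partial M_2}=0$ (Lax–Milgram, after subtracting a lift of $f$); one should note $w\equiv$ const is excluded since $\partial M_2\neq\emptyset$, killing the kernel. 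Then I would verify, using unique continuation / the uniqueness of the Dirichlet problem on $M_1$, that the function obtained by gluing $w$ on $N$ with the $G$-harmonic extension of $w|_{\partial M_1}$ into $M_1$ is exactly the global solution $u$ of $-\Delta_G u=0$ on $M_2$: this is where the transmission condition guarantees $H^1$-regularity across $\partial M_1$ and hence global harmonicity. Consequently $\Lambda_{G,2}(f)=\partial_\nu u|_{\partial M_2}=\partial_\nu^{N} w|_{\partial M_2}$ is computed entirely from $G|_N$ and $\Lambda_{G,1}$.

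The final step is purely formal: run the same construction for $\tilde G$. Since $\tilde G|_N = G|_N$ the collar bilinear form is literally the same operator, and since $\Lambda_{\tilde G,1}=\Lambda_{G,1}$ the nonlocal boundary term is the same; by uniqueness of the solution to the transmission problem, $\tilde w = w$, hence $\Lambda_{\tilde G,2}(f)=\partial_\nu^{N}\tilde w|_{\partial M_2}=\partial_\nu^{N} w|_{\partial M_2}=\Lambda_{G,2}(f)$ for every $f$, giving $\Lambda_{G,2}=\Lambda_{\tilde G,2}$.

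**Main obstacle.** The routine parts (well-posedness of the Dirichlet problem, mapping properties of the DN map) are standard; the delicate point is justifying the transmission/gluing argument rigorously at $H^1$-regularity — namely that matching Dirichlet traces and matching Neumann traces across $\partial M_1$ really produces a global weak solution, and conversely that the global solution restricted to $N$ solves the transmission problem with boundary operator $\Lambda_{G,1}$. One must be careful with the orientation of the unit normals on the two sides of $\partial M_1$ and with the weak (distributional) meaning of the Neumann traces, integrating by parts against arbitrary test functions on all of $M_2$ to see that no singular layer appears on $\partial M_1$. An alternative, perhaps cleaner, route is to avoid the transmission formalism and instead argue directly: given $f$ on $\partial M_2$, extend arbitrarily to $H^1(M_2\setminus M_1)$, use Runge-type density or the layer-potential representation, and invoke that two global solutions agreeing in Cauchy data on $\partial M_1$ (forced by $\Lambda_{G,1}=\Lambda_{\tilde G,1}$) must have the same Neumann data on $\partial M_2$; this is essentially the argument used for asymptotically hyperbolic manifolds in \cite{IK2014}, Chapter 5, and I expect the authors' proof to follow that template.
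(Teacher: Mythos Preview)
Your proposal is correct, but the paper's argument is closer to the ``alternative, cleaner route'' you sketch at the end than to your primary transmission-problem approach. The paper never sets up or solves a nonlocal boundary value problem on the collar $N=M_2\setminus M_1$; instead it starts from the \emph{given} global $G$-harmonic solution $u$ on $M_2$ with $u|_{\partial M_2}=f$, sets $\psi=u|_{\partial M_1}$, solves the ordinary Dirichlet problem $-\Delta_{\tilde G}u_{in}=0$ on $M_1$ with $u_{in}|_{\partial M_1}=\psi$, and defines $v$ by gluing $u$ on $M_2\setminus M_1$ to $u_{in}$ on $M_1$. Since $G=\tilde G$ on $M_2\setminus M_1$, $v$ is $\tilde G$-harmonic away from $\partial M_1$; the hypothesis $\Lambda_{G,1}=\Lambda_{\tilde G,1}$ then forces the one-sided Neumann traces of $v$ to match across $\partial M_1$, so $v$ is globally $\tilde G$-harmonic with boundary value $f$, whence $v=\tilde u$ by uniqueness of the Dirichlet problem and $u=\tilde u$ on $M_2\setminus M_1$.

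The difference is that you prove $\Lambda_{G,2}$ is a \emph{function} of $(\Lambda_{G,1},G|_N)$ by solving for $w$ on the collar via Lax--Milgram with the DN map as a boundary operator, whereas the paper avoids this machinery entirely by using the global solution $u$ as a free input and only invoking uniqueness (not existence) for the auxiliary problem. Your route buys a slightly stronger statement (an explicit reconstruction formula for $\Lambda_{G,2}$) at the cost of having to verify coercivity and well-posedness of the transmission problem; the paper's route is shorter and needs only the standard Dirichlet problem on $M_1$ and $M_2$. The ``delicate point'' you flag --- that matching Dirichlet and Neumann traces across $\partial M_1$ yields a global weak solution --- is exactly the one step the paper also uses, and it handles it in one line.
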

Together with the well-known boundary determination results from \cite{KY2002, LeU1989} or \cite{DSFKSU2009}, Section 8, we will deduce from Theorem \ref{ExtensionThm} :

\begin{prop} \label{ExtensionCalderon}
Assume that the hypotheses of Theorem \ref{ExtensionThm} hold. Then
$$
  \Lambda_{G,M} = \Lambda_{\tilde{G},M} \ \ \Longrightarrow \ \ \Lambda_{G,\Omega} = \Lambda_{\tilde{G},\Omega},	
$$
where the extended metrics $G$ and $\tilde{G}$ on the whole cylinder $\Omega$ are conformally St\"ackel metrics that can be chosen so as to satisfy $G = \tilde{G}$ on $\Omega \setminus M$ and the generic condition :
\begin{equation} \label{Generic}
  \left( \begin{array}{c} -s_{13}(0) \\ s_{12}(0) \end{array} \right), \  \left( \begin{array}{c} -s_{13}(A) \\ s_{12}(A) \end{array} \right) \ \textrm{are linearly independent}. 
\end{equation}
\end{prop}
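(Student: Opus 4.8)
The plan is to deduce Proposition \ref{ExtensionCalderon} from Theorem \ref{ExtensionThm} in two stages: first pass from the DN map on $M$ to the DN map on $\Omega$, and second arrange for the extended metrics to agree outside $M$ and to satisfy the genericity condition (\ref{Generic}). The first stage requires building, out of the given metrics $G$ and $\tilde G$ on $M$, conformally St\"ackel metrics on the whole cylinder $\Omega$ that coincide with $G$, resp. $\tilde G$, near $\partial M$, and crucially that coincide with \emph{each other} on $\Omega \setminus M$. This is where the boundary determination results enter: since $\Lambda_{G,M} = \Lambda_{\tilde G,M}$, the results of \cite{KY2002, LeU1989} (or \cite{DSFKSU2009}, Section 8) give that $G$ and $\tilde G$ have the same Taylor expansion at $\partial M$ in suitable boundary normal coordinates. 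However, to apply Theorem \ref{ExtensionThm} we need literal equality $G = \tilde G$ on a collar, not merely equality of jets, so first I would invoke the standard reduction: there is a boundary-fixing diffeomorphism $\psi$ of $M$ (indeed of a collar of $\partial M$) such that $\psi^*\tilde G = G$ near $\partial M$. Replacing $\tilde G$ by $\psi^*\tilde G$ changes neither the DN map nor the conformally St\"ackel character (the latter because, as noted in comment 1 after Theorem \ref{Main}, the relevant diffeomorphisms are the coordinate changes (\ref{CV}); one should check that the boundary-fixing diffeomorphism from boundary determination can be taken of this form, or at least absorbed into the class preserved by the argument). After this normalization $G$ and $\tilde G$ agree on a neighborhood of $\partial M$ in $\Omega$.

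Next I would construct the common extension across $\Omega \setminus M$. Recall from (\ref{Embed}) that $M \subset\subset \Omega = [0,A]\times\mathbb T^2$, so $\Omega \setminus M$ consists of two collar regions near $\Omega_0 = \{0\}\times\mathbb T^2$ and $\Omega_1 = \{A\}\times\mathbb T^2$. On these regions I am free to prescribe the metric: I simply choose, once and for all, a fixed conformally St\"ackel metric on each collar that matches $G$ (hence $\tilde G$) smoothly along $\partial M$, and — this is the point of the proposition — I choose the underlying St\"ackel matrix $S$ so that its first and third rows, evaluated at $x^1 = 0$ and $x^1 = A$, are arranged to make the two vectors $(-s_{13}(0), s_{12}(0))$ and $(-s_{13}(A), s_{12}(A))$ linearly independent. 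Since we have total freedom in choosing $s_{12}, s_{13}$ as functions of $x^1$ on the collars (subject only to smooth matching of the finitely many derivatives dictated by boundary determination at $x^1$ equal to the boundary values of $M$, and to keeping the $h_i^2$ positive), this is an open, generic condition that is trivially achievable; concretely one can take $s_{12}, s_{13}$ to be suitable constants on each collar far from $\partial M$ and interpolate. The same extension data is used for both $G$ and $\tilde G$ on $\Omega\setminus M$, so $G = \tilde G$ there, and both extended metrics are conformally St\"ackel on all of $\Omega$ (the conformal factors $c, \tilde c$ are then redetermined on $\Omega$ by solving the Dirichlet problem (\ref{DirichletPbc}) with the appropriate boundary data and functions $\phi_i$ extended arbitrarily but smoothly to the collars, as in (\ref{H1c})--(\ref{DirichletPbc})).

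With $G = \tilde G$ on $\Omega \setminus M$ and $\Lambda_{G,M} = \Lambda_{\tilde G,M}$, I apply Theorem \ref{ExtensionThm} with $M_1 = M$ and $M_2 = \Omega$ to conclude $\Lambda_{G,\Omega} = \Lambda_{\tilde G,\Omega}$, which is exactly the assertion of the proposition, with the extended metrics conformally St\"ackel, equal outside $M$, and satisfying (\ref{Generic}).

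I expect the main obstacle to be the matching step: ensuring that the boundary-determination-driven identification of $G$ and $\tilde G$ near $\partial M$ can be carried out \emph{within the conformally St\"ackel class} and by an admissible diffeomorphism, and then that the common collar extension can be chosen to be conformally St\"ackel while simultaneously matching all normal derivatives of $G$ at $\partial M$ (which are finite in number for smoothness at each finite order) and enforcing (\ref{Generic}). Since the St\"ackel data $s_{ij}(x^i)$ are functions of a single variable each, this is really a one-dimensional interpolation/extension problem on each coordinate $x^i$ over the collar intervals, together with solving the scalar elliptic Dirichlet problem (\ref{DirichletPbc}) for the conformal factor on $\Omega$ — both routine once set up — so the content of the proof is organizational rather than analytic. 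I would take care, in particular, to verify that the genericity condition (\ref{Generic}) is compatible with positivity of the $h_i^2$ near $\Omega_0, \Omega_1$, which follows because one only needs it at the two endpoints and positivity is an open condition that the fixed collar metric can be chosen to respect.
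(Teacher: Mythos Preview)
Your approach is essentially the paper's: use boundary determination to match $G$ and $\tilde G$ along $\partial M$, extend both identically across $\Omega\setminus M$ (arranging the generic condition (\ref{Generic}) there), and invoke Theorem \ref{ExtensionThm} with $M_1=M$, $M_2=\Omega$.

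The one substantive difference is your detour through a boundary-fixing diffeomorphism to force \emph{literal} equality $G=\tilde G$ on a collar of $\partial M$. The paper avoids this. It simply defines $\hat G$ piecewise as $\tilde G$ on $M$ and $G$ on $\Omega\setminus M$, and observes that jet equality of $G$ and $\tilde G$ on $\partial M$ (from boundary determination) already makes $\hat G$ smooth across $\partial M$; equality on $\Omega\setminus M$ is then automatic by construction, which is all Theorem \ref{ExtensionThm} requires. So your worry that ``to apply Theorem \ref{ExtensionThm} we need literal equality $G=\tilde G$ on a collar, not merely equality of jets'' is misplaced: jets on $\partial M$ suffice for smoothness of the glued metric, and the hypothesis of Theorem \ref{ExtensionThm} concerns $\Omega\setminus M$, not a collar inside $M$. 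This shortcut also sidesteps your (legitimate) concern about whether the boundary-determination diffeomorphism can be taken of the special form (\ref{CV}) --- the paper never needs it.

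Your remaining concern, that the glued metric $\hat G$ be conformally St\"ackel on all of $\Omega$, is one the paper also faces (it asserts this ``since $G$ and $\tilde G$ are''); your more explicit construction via one-variable interpolation of the St\"ackel data and the Dirichlet problem (\ref{DirichletPbc}) for the conformal factor is a reasonable way to flesh this out.
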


In conclusion, it will be enough to prove uniqueness in the Calder\'on problem on conformally St\"ackel cylinders $(\Omega,G)$ for which we can use separation of variables. The proof of Theorem \ref{ExtensionThm} and Proposition \ref{ExtensionCalderon} will be given in Section \ref{III1}. \\


\noindent \textit{Step 2. Boundary determination.} After reducing the Calder\'on problem to the whole conformally St\"ackel cylinders $\Omega$ satisfying the conclusions of Proposition \ref{ExtensionCalderon}, we use the standard boundary determination results \footnote{Precisely we use the fact $\Lambda_{G,\Omega} = \Lambda_{\tilde{G},\Omega}$ imply the equality of $G_{|\partial \Omega}$ and $\tilde{G}_{| \partial \Omega}$ as well as the equality between the normal derivatives $(\partial_\nu G)_{|\partial \Omega}$ and $(\partial_{\tilde{\nu}} \tilde{G})_{|\partial \Omega}$ on the boundary $\partial \Omega$.} from \cite{KY2002, LeU1989} and the particular structure of the metrics $G$ and $\tilde{G}$ given by (\ref{MetricConf}) - (\ref{StackelMatrix}) to prove in a successive series of steps that first (from the equality of the metrics on the boundary)
\begin{equation} \label{T=}
  \left( \begin{array}{cc} s_{22} & s_{23} \\ s_{32} & s_{33} \end{array} \right) = \left( \begin{array}{cc} \tilde{s}_{22} & \tilde{s}_{23} \\ \tilde{s}_{32} & \tilde{s}_{33} \end{array} \right), 
\end{equation}
as functions of $x^2,x^3$ and
\begin{equation} \label{G=}
  \left\{ \begin{array}{c}
	(c^4 \, \det S) (x^1,x^2,x^3) = (\tilde{c}^4 \, \det \tilde{S})(x^1,x^2,x^3), \\  
  R(x^1,x^2,x^3) = \tilde{R}(x^1,x^2,x^3), \\  
  H_1(x^1,x^2,x^3) = \tilde{H}_1(x^1,x^2,x^3), 
	\end{array} \right. \quad x^1 = 0, A, \ \forall x^2, x^3.
\end{equation}
and second (from the equality of the normal derivatives of the metrics on the boundary)
\begin{equation} \label{Gnu=}
\left\{ \begin{array}{c}
  (\partial_1 \log c^4 \, \det S) (x^1,x^2,x^3) = ( \partial_1 \log \tilde{c}^4 \, \det \tilde{S}) (x^1,x^2,x^3), \\
  \Gamma_1(x^1,x^2,x^3) = \tilde{\Gamma}_1(x^1,x^2,x^3), 
	\end{array} \right. \quad x^1 = 0, A, \ \forall x^2, x^3.
\end{equation}
where we recall that
$$
\Gamma_{i}:=-\partial_{i}\log \frac{H_{1}H_{2}H_{3}}{H_{i}^{2}}\,, \ \ i=1,2,3. 
$$
Then, using the special structure (\ref{DNStructure}) of the DN map, we will infer from (\ref{T=}) - (\ref{Gnu=}) that 
\begin{equation} \label{AG=}
  A_G = A_{\tilde{G}},
\end{equation}
where the operator $A_G$ is defined in (\ref{AG}). From this and some additional work, we will be able to show the equality of the eigenfunctions $Y_m$   
\begin{equation} \label{Ym=}
  Y_m = \tilde{Y}_m, \ \forall m,
\end{equation}
the equality of the joint spectra 
\begin{equation} \label{JointS=}
  (\mu_m^2, \nu_m^2 )= (\tilde{\mu}_m^2, \tilde{\nu}_m^2), \quad \forall m,
\end{equation}
and the equality between the $\phi_2$ and $\phi_3$  
\begin{equation} \label{Phi23=}
  \phi_2 = \tilde{\phi}_2, \quad \phi_3 = \tilde{\phi}_3.  
\end{equation}
Hence at the end of the second step, we will have recovered most of the unknown functions of one variable depending on one of the angular variables $x^2,x^3$, and in fact all of them if we keep in mind the possibility of removing some of these unknown functions thanks to the change of variables (\ref{CV}). 

Note finally that the above results clearly depend on the particular structure of conformally St\"ackel metrics on the cylinder $\Omega$ but also on a clear understanding of the different invariances of the St\"ackel metrics with respect to different but equivalent choices of the associated St\"ackel matrices. These invariances will be explained in Section \ref{II1} whereas the boundary determination results and their consequences will be given in Section \ref{III2}. \\ 

\noindent \textit{Step 3. The multi-parameter CAM method.} At this stage, it remains essentially to determine the unknown functions depending on the radial variable $x^1$ and the conformal factor $c$. To determine the former, we start from the equality
\begin{equation} \label{MJS=}
  M(\mu_m^2,\nu_m^2) =\tilde{ M}(\mu_m^2,\nu_m^2), \quad \forall m,
\end{equation}
which is a consequence of (\ref{AG=}) and (\ref{JointS=}). Recall that the WT function $M$ only depends on the radial ODE (\ref{Separateda}) and contains all the information on the functions $s_{12}, s_{13}, \phi_1$ through the well known Borg-Marchenko Theorem \cite{Be2001, Bo1946, Bo1952, GS2000}. Our first task is thus to extend the equality (\ref{MJS=}) which is initially true on the joint spectrum $J = \{(\mu_m^2,\nu_m^2), \ m \geq 1 \}$ to the whole plane $\C^2$, that is we aim to complexify the angular momenta as it was done for the first time by Regge in \cite{Re1959} and applied to solve some inverse problems in \cite{DeAR1965, DN2011, DN2016, DN2017, DGN2016, DKN2015, DKN2018a, DKN2018b, DKN2019a, DKN2019b, Go2018, Lo1968, New2002, Ra1999} and references therein. For this, we use a multi-parameter CAM method as in \cite{Go2018} which allows us to prove that
\begin{equation} \label{M=}
  M(\mu^2,\nu^2) =\tilde{ M}(\mu^2,\nu^2), \quad \forall \mu, \nu \in \C \backslash\{poles\}
\end{equation}
Once it is done, an application of Borg-Marchenko Theorem leads to
\begin{equation} \label{s1jphi1=}
  \phi_1 = \tilde{\phi}_1, \quad s_{12} = \tilde{s}_{12}, \quad s_{13} = \tilde{s}_{13},  
\end{equation}
up to a change of $x^1$-variable of the type (\ref{CV}). We would like to emphasize that the multi-parameter CAM method that permits to infer (\ref{M=}) from (\ref{MJS=}) is far from being as simple as in the case of a single angular momentum. Indeed, the method lies within the realm of functions of several complex variables and not one complex variable. Moreover, a good understanding of the joint spectrum $J$ is needed. We follow here the corresponding results obtained by Gobin \cite{Go2018}, which should be useful in other contexts as well. The results on the CAM method will be given in Section \ref{III3}. \\

\noindent \textit{Step 4. A unique continuation argument for the conformal factor.} We finish the proof of our main Theorem by remarking first that the metric $G$ can be written as
$$
  G = \alpha \, g_0, \quad \alpha = c^4 \det S, \quad g_0 = \frac{1}{s^{11}}(dx^1)^2 + \frac{1}{s^{21}}(dx^2)^2 + \frac{1}{s^{31}}(dx^3)^2, 
$$
Note from the results of Steps 1 to 3 that we have 
\begin{equation} \label{g0}
  g_0 = \tilde{g_0},
\end{equation}	
up to a change of coordinates (\ref{CV}). Thus it only remains to prove that $\alpha = \tilde{\alpha}$. The second crucial remark consists in using (\ref{PDEc}) to show that the conformal factor $\alpha$ satisfies the elliptic PDE
\begin{equation} \label{PDEalpha}
  -\Delta_{g_0} \alpha - Q_{g_0,\phi_i} \alpha = 0, 
\end{equation} 
where
\begin{equation} \label{Q}
  Q_{g_0,\phi_i}=\sum_{i=1}^3 g_0^{ii} \left[ \frac{\partial^2_{ii} \log \det g_0}{4}  + \frac{\partial_{i} \log \det g_0}{8} + \frac{( \partial_{i} \log \det g_0)^2}{16} + \phi_i \right].  
\end{equation}
Thanks to (\ref{Phi23=}), (\ref{s1jphi1=}) and (\ref{g0}), we thus observe one additional (and last) remarkable fact: the conformal factors $\alpha$ and $\tilde{\alpha}$ satisfy the \emph{same} second order elliptic PDE (\ref{PDEalpha}). Finally, we use (\ref{G=}), (\ref{Gnu=}) and a classical unique continuation principle \cite{HoI2013, Tat1995, Tat2004} to prove $\alpha = \tilde{\alpha}$. As a consequence, we find that
$$
  G = \tilde{G},
$$
up to some isometries of the type (\ref{CV}) that preserve the boundary. The derivation of the elliptic PDE (\ref{PDEalpha}) satisfied by $\alpha$ and the unique continuation argument will be given in Section \ref{III4}.


\Section{The DN map on conformally St\"ackel manifolds}

\subsection{A review of complete integrability and separability's properties on conformally St\"ackel manifolds} \label{II1}


\noindent \textit{The case of St\"ackel manifolds}. St\"ackel manifolds (or systems) date back to the work by St\"ackel \cite{Sta1893}, Robertson \cite{Rob1928} and Eisenhart \cite{Ei1934} on the theory of orthogonal variable separation for the Hamilton-Jacobi (HJ) equation $g(\nabla u, \nabla u) = E$ and the Helmholtz equation $-\Delta_g \, \psi = E \psi$ on a $n$-dimensional pseudo-Riemannian manifold $(M,g)$. Here by \emph{orthogonal separation}, we mean that we look for diagonal metrics $g$ satisfying $g_{ij} = 0, \ i \ne j$ such that : 
\begin{itemize}
\item the HJ equation possesses locally a solution $u(x,c)$ parametrized by $n$ constants $c = (c_1, \dots, c_n)$ of the form 
$$
  u(x,c) = \sum_{i=1}^n u_i(x^i,c), \quad x = (x^1, \dots, x^n), 
$$
satisfying the completeness condition
$$
  \det \left[ \frac{ \partial^2 u}{\partial x^i \partial c_j} \right] \ne 0.
$$
\item the Helmholtz equation possesses locally a solution $\psi(x,c)$ parametrized by $2n$ constants $c = (c_1, \dots, c_{2n})$ of the form 
$$
  \psi(x,c) = \prod_{i=1}^n \psi_i(x^i,c), \quad x = (x^1, \dots, x^n), 
$$
satisfying the completeness condition
$$
  \det \left[ \begin{array}{c} \frac{ \partial u_i}{\partial c_J} \\ \frac{ \partial v_i}{\partial c_J} \end{array} \right] \ne 0, \quad u_i = \frac{\psi_i'}{\psi_i}, \quad v_i = \frac{\psi_i''}{\psi_i}.
$$
\end{itemize}
The three classical theorems of St\"ackel, Robertson and Eisenhart are as follows 

\begin{thm}[St\"ackel, 1893] \label{Stackel}
The HJ equation is separable in orthogonal coordinates $x=(x^1, \dots, x^n)$ for all values of the energy $E$ if and only if the metric is of the form 
$$
  g = \sum_{i=1}^{n}{h}_{i}^{2}(dx^{i})^{2}\,, \quad h_{i}^{2}=\frac{\det S}{s^{i1}}\,,
$$
with $S=\left( s_{ij}(x^i) \right)$ being a St\"ackel matrix, that is a non-singular matrix such that each entry $s_{ij}$ depends only the coordinate $x^i$, and $s^{ij}$ denotes the cofactor of the component $s_{ij}$ of the matrix $S$. 
\end{thm}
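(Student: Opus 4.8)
The plan is to prove the two implications separately; the ``if'' direction is an explicit construction and the ``only if'' direction carries the weight.

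\textbf{Sufficiency.} Suppose $g=\sum_{i=1}^n h_i^2(dx^i)^2$ with $h_i^2=\det S/s^{i1}$ for a St\"ackel matrix $S=(s_{ij}(x^i))$. The algebraic fact driving everything is that $g^{ii}=1/h_i^2=s^{i1}/\det S$ is exactly the $(1,i)$ entry of $A:=S^{-1}$, so that $\sum_{i=1}^n g^{ii}(x)\,s_{ij}(x^i)=\sum_i A_{1i}S_{ij}=\delta_{1j}$. Given constants $c=(c_1,\dots,c_n)$ with $c_1=E$, I solve the $n$ uncoupled first-order ODEs $(u_i'(x^i))^2=\sum_{j=1}^n s_{ij}(x^i)c_j$ on the open set where the right-hand sides are positive, and set $u(x,c)=\sum_i u_i(x^i,c)$. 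Then $\sum_i g^{ii}(u_i')^2=\sum_j c_j\sum_i g^{ii}s_{ij}=c_1=E$, so $u$ is an additively separated solution of the Hamilton--Jacobi equation at energy $E$; completeness is immediate since $\partial^2 u/\partial x^i\partial c_j=\partial_{c_j}u_i'=s_{ij}(x^i)/(2u_i')$, whence $\det[\partial^2 u/\partial x^i\partial c_j]=\det S/(2^n\prod_i u_i')\neq 0$.

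\textbf{Necessity.} Conversely, assume that for each $E$ the equation $\sum_i g^{ii}(x)(\partial_i u)^2=E$ admits a complete additively separated solution $u=\sum_i u_i(x^i,c)$, normalized so that $c_1=E$, and set $\pi_i(x^i,c):=(u_i'(x^i,c))^2$. The completeness condition $\det[\partial u_i'/\partial c_j]\neq0$ gives $\det[\partial\pi_i/\partial c_j]\neq0$ off a thin set, so at fixed $x$ the one-forms $d_c\pi_i$ (exterior differential in the $c$ variables only) are pointwise independent, and the separated identity $\sum_i g^{ii}(x)\pi_i(x^i,c)=c_1$ becomes $dc_1=\sum_i g^{ii}(x)\,d_c\pi_i$, exhibiting $(g^{11}(x),\dots,g^{nn}(x))$ as the coordinates of the fixed covector $dc_1$ in this moving coframe. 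I would then exploit this in one of two ultimately equivalent ways. Differentiating the identity in two further coordinates $x^j,x^k$ with $i,j,k$ distinct, and using that $\pi_i$ and $\partial_{x^i}\pi_i$ depend only on $x^i$, one re-expands $d_c(\partial_{x^j}\pi_j)$ and $d_c(\partial_{x^k}\pi_k)$ in the coframe $\{d_c\pi_i\}$ and matches coefficients, obtaining a closed second-order PDE system for the $g^{ii}$ --- the classical \emph{St\"ackel conditions}. Equivalently, reading $p_i=u_i'(x^i,c)$ on the cotangent bundle, the non-energy constants $c_2,\dots,c_n$ become $n-1$ functionally independent integrals of $H=\sum_i g^{ii}p_i^2$ in involution, each necessarily of the diagonal quadratic form $F_a=\sum_i\lambda_a^i(x)p_i^2$ with $\lambda_1^i=g^{ii}$.

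\textbf{Conclusion and main obstacle.} In either formulation, the remaining --- and genuinely delicate --- step is a local normal-form argument: one must show that the St\"ackel PDE system (equivalently, that an involutive family of diagonal quadratic integrals exists) forces the matrix $\Lambda=(\lambda_a^i)$ to have inverse $S=\Lambda^{-1}$ whose $i$-th row consists of functions of $x^i$ alone, i.e.\ a St\"ackel matrix; since $F_1=H$ its first row is $(g^{ii})_i$, so $g^{ii}=(S^{-1})_{1i}=s^{i1}/\det S$ and $h_i^2=\det S/s^{i1}$, as claimed. The difficulty here is bookkeeping of the gauge freedom --- the St\"ackel matrix attached to a given metric is determined only up to a constant invertible right factor fixing the normalization $c_1=E$, and up to the one-variable row rescalings corresponding to the coordinate changes (\ref{CV}) --- so that the reconstructed matrix is genuinely of St\"ackel type with the prescribed first column.
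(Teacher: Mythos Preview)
The paper does not actually prove Theorem~\ref{Stackel}: it is stated as a classical result attributed to St\"ackel (1893) and quoted without argument as part of the background review in Section~\ref{II1}, alongside the companion theorems of Robertson and Eisenhart. So there is no ``paper's own proof'' to compare against; your proposal is an independent attempt at a classical theorem the authors take for granted.

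On the merits of what you wrote: the sufficiency direction is clean and correct --- the identity $\sum_i g^{ii}s_{ij}=\delta_{1j}$ is exactly the right observation, and the explicit family $u(x,c)$ with its completeness check is the standard construction. The necessity direction, however, is only a sketch. You correctly isolate the decisive step (showing that the coefficient matrix $\Lambda=(\lambda_a^i)$ of the diagonal quadratic integrals has an inverse whose $i$-th row depends on $x^i$ alone), and you correctly flag it as the ``main obstacle,'' but you do not carry it out. The Levi--Civita/St\"ackel integrability conditions you allude to --- obtained by differentiating $\sum_i g^{ii}\pi_i=c_1$ in pairs of coordinates --- do yield the result, but the actual derivation (that $\partial_k(S^{-1})_{ij}=0$ for $k\ne i$, equivalently that $\partial_k\lambda_a^i$ is proportional to $\lambda_a^i-\lambda_a^k$ with a proportionality factor independent of $a$) requires a concrete computation you have not supplied. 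As written, the proof is incomplete at precisely the point you yourself identify.
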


\begin{thm}[Robertson, 1928] \label{Robertson}
The Helmholtz equation is separable in orthogonal coordinates $x=(x^i)$ for all values of the energy $E$ if and only if in these coordinates the metric $g$ is St\"ackel and moreover the following Robertson condition is satisfied
$$
  \partial_j \gamma_i = 0, \quad i \ne j, 
$$
with 
$$
  \gamma_{i}:=-\partial_{i}\log \frac{h_{1}h_{2}h_{3}}{h_{i}^{2}}\,.
$$
\end{thm}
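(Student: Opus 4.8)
The plan is to establish the two implications separately; the forward one --- separability of the Helmholtz equation implies that the metric is St\"ackel \emph{and} satisfies the Robertson condition --- is the substantial part. \emph{Sufficiency.} Suppose $g$ is a St\"ackel metric and $\partial_j\gamma_i=0$ for $i\neq j$, so each $\gamma_i=\gamma_i(x^i)$ is a function of a single variable. Starting from the orthogonal-coordinate expression $-\Delta_g=-\sum_i\frac1{h_i^2}\bigl(\partial_i^2-\gamma_i\partial_i\bigr)$ and using $1/h_i^2=s^{i1}/\det S$, I would look for product solutions $\psi=\prod_i\psi_i(x^i)$ of $-\Delta_g\psi=E\psi$ whose factors solve
$$
  \psi_i'' - \gamma_i(x^i)\,\psi_i' + \Bigl(\sum_j s_{ij}(x^i)\,c_j\Bigr)\psi_i = 0,\qquad c_1:=E.
$$
This is a bona fide ODE in the single variable $x^i$ precisely because the Robertson condition makes $\gamma_i$ depend only on $x^i$ (the St\"ackel property does the same for the $s_{ij}$). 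Dividing $-\Delta_g\psi=E\psi$ by $\psi$, multiplying by $\det S$, and invoking the St\"ackel identity $\sum_i s^{i1}s_{ij}=\delta_{1j}\det S$ --- equivalently $\sum_i s_{ij}/h_i^2=\delta_{1j}$ --- collapses the equation to the single identity $c_1=E$, so every such $\psi$ is a solution, for all values of $E$. The completeness condition (non-vanishing of the determinant built from $\partial u_i/\partial c_J$ and $\partial v_i/\partial c_J$, with $u_i=\psi_i'/\psi_i$, $v_i=\psi_i''/\psi_i$) follows from the nonsingularity of $S$, exactly as in the proof of Theorem \ref{Stackel}.

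\emph{Necessity.} Conversely, assume $-\Delta_g\psi=E\psi$ admits, for every $E$, a complete multiplicatively separated solution $\psi=\prod_i\psi_i(x^i,c)$. I would first reduce to St\"ackel's theorem: passing to the eikonal/WKB regime in the separation equations (rescaling the separation constants and letting the large parameter go to infinity) shows that the phases of the factors add up to a complete integral of the Hamilton--Jacobi equation $g(\nabla u,\nabla u)=E$ valid for all $E$, so by Theorem \ref{Stackel} the metric is St\"ackel. With $g$ now known to be St\"ackel, I would again multiply the separated Helmholtz equation by $\det S$ to obtain the identity
$$
  \sum_i s^{i1}\Bigl(-\frac{\psi_i''-\gamma_i\psi_i'}{\psi_i}\Bigr) = E\,\det S,
$$
valid throughout the complete family. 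Here $s^{i1}$ and $\det S$ have coordinate dependence fixed by the St\"ackel structure, whereas $\psi_i'/\psi_i$ and $\psi_i''/\psi_i$ are functions of $x^i$ (and the constants) only; the completeness condition guarantees that, at any point, these ratios --- and their $x^i$-derivatives --- may be varied independently by moving the constants. Fixing all variables but $x^k$ and using this freedom to isolate the term carrying $\psi_k'/\psi_k$ forces $s^{k1}\gamma_k$, hence $\gamma_k$ (as $s^{k1}$ is nowhere zero), to be independent of $x^j$ for each $j\neq k$; were it not so, the identity could not persist as the constants vary. This is exactly $\partial_j\gamma_k=0$ for $j\neq k$.

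The hard part is the final step of the necessity argument: turning ``separability forces the first-order coefficients to decouple'' into a rigorous deduction requires careful bookkeeping with the completeness determinant, because $\gamma_i$, the cofactors $s^{i1}$ and $\det S$ all depend on several coordinates at once, and one must show that the only configuration compatible with the separated structure of the $\psi_i$ is the Robertson one. (The reduction ``Helmholtz separable $\Rightarrow$ Hamilton--Jacobi separable'' is classical, but also deserves care in the way the classical limit of the separation equations is justified.)
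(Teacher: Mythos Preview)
The paper does not prove this theorem. Theorem~\ref{Robertson} is stated in Section~\ref{II1} as a classical result attributed to Robertson (1928), alongside the companion theorems of St\"ackel and Eisenhart, as part of a historical review; no proof is given or sketched. So there is no ``paper's own proof'' to compare against.

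That said, your sketch follows the classical line of argument and is broadly correct. A few comments. Your sufficiency argument is standard and essentially complete: the Robertson condition is exactly what ensures that the first-order coefficient $\gamma_i$ in the Laplace--Beltrami operator depends only on $x^i$, so that the proposed factor equations are genuine ODEs, and the St\"ackel cofactor identity then closes the system. For necessity, your two-step strategy (pass to the eikonal limit to invoke Theorem~\ref{Stackel}, then isolate the first-order term) is the traditional one, but you are right to flag the second step as the delicate point. The clean way to execute it is not to ``vary the constants'' informally but to differentiate the separated Helmholtz identity with respect to the independent parameters $u_i=\psi_i'/\psi_i$ and $v_i=\psi_i''/\psi_i$ (whose independence is precisely the content of the completeness condition), obtaining relations among the coefficients that force each $\gamma_i$ to depend on $x^i$ alone. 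The WKB reduction to Hamilton--Jacobi separability is indeed classical; alternatively one can bypass it by observing that the leading-order (second-derivative) part of the separability condition already reproduces the St\"ackel hypotheses directly.
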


\begin{thm}[Eisenhart, 1934] \label{Eisenhart}
  The Robertson condition is satisfied if and only if the Ricci tensor is diagonal, \textit{i.e.} $R_{ij} = 0$ for $i \ne j$. 
\end{thm}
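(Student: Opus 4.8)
\emph{Overall approach.} I would prove this by a direct tensor computation in the orthogonal coordinates $(x^1,x^2,x^3)$, keeping in mind that the implicit standing hypothesis (inherited from the statement of Robertson's theorem) is that $g=\sum_i h_i^2(dx^i)^2$ is a St\"ackel metric, i.e. $h_i^2=\det S/s^{i1}$ for a St\"ackel matrix $S=(s_{ij}(x^i))$, so that $(1/h_i^2)_i$ is the first row of $S^{-1}$ and hence
$$\sum_i \frac{s_{ij}(x^i)}{h_i^2}=\delta_{j1},\qquad j=1,2,3.$$
This St\"ackel hypothesis is genuinely needed: for the merely orthogonal metric $g=dx^2+dy^2+e^{2(x+y)}dz^2$ the Robertson condition holds trivially (all $\gamma_i$ are constant) while $R_{xy}=-1\neq0$.

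\emph{Step 1: off-diagonal Ricci tensor of an orthogonal metric.} Write $\lambda_i=\log h_i$. The only non-vanishing Christoffel symbols are $\Gamma^i_{ii}=\partial_i\lambda_i$, $\Gamma^i_{ij}=\partial_j\lambda_i$ and $\Gamma^j_{ii}=-e^{2\lambda_i-2\lambda_j}\partial_j\lambda_i$ for $i\neq j$. Substituting these into $R_{ij}=\partial_k\Gamma^k_{ij}-\partial_j\Gamma^k_{ik}+\Gamma^k_{km}\Gamma^m_{ij}-\Gamma^k_{jm}\Gamma^m_{ik}$ and carrying out the (routine but lengthy) cancellations gives, for $\{i,j,k\}=\{1,2,3\}$,
$$R_{ij}=-\partial_i\partial_j\lambda_k+(\partial_i\lambda_k)(\partial_j\lambda_i)+(\partial_i\lambda_j)(\partial_j\lambda_k)-(\partial_i\lambda_k)(\partial_j\lambda_k).$$

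\emph{Step 2: translating the Robertson condition, and closing the loop.} Because the cofactor $s^{i1}$ does not involve $x^i$, the St\"ackel form gives $\partial_i\lambda_i=\frac12\partial_i\log\det S$ for each $i$, hence $\partial_i\partial_j\lambda_i=\partial_i\partial_j\lambda_j$ when $i\neq j$; combined with $\gamma_i=-\partial_i(\lambda_j+\lambda_k-\lambda_i)$ this yields $\partial_i\gamma_j=\partial_j\gamma_i=-\partial_i\partial_j\lambda_k$, so that the Robertson condition $\partial_j\gamma_i=0$ ($i\neq j$) is equivalent to the three equations $\partial_1\partial_2\lambda_3=\partial_1\partial_3\lambda_2=\partial_2\partial_3\lambda_1=0$. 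It then remains to compare these with $R_{12}=R_{13}=R_{23}=0$. Differentiating the St\"ackel equations in the direction $x^m$ produces, for each $m$, the linear system $\sum_i s_{ij}(x^i)\,(\partial_m\lambda_i)/h_i^2=\frac12\,s_{mj}'(x^m)/h_m^2$ ($j=1,2,3$), which expresses the nine first derivatives $\partial_m\lambda_i$ in terms of $S$ and the $h_i$. Feeding these relations into the quadratic part of the formula of Step 1 one checks that the first-order terms are controlled by the mixed second derivatives $\partial_1\partial_2\lambda_3,\partial_1\partial_3\lambda_2,\partial_2\partial_3\lambda_1$, so that the vanishing of all three off-diagonal $R_{ij}$ is equivalent to the vanishing of all three of those, i.e. to the Robertson condition. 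This gives the stated equivalence.

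\emph{Main obstacle.} The crux is Step 2's "closing" computation: it is the only place where one really uses that $g$ is St\"ackel rather than merely orthogonal, and the bookkeeping required to reduce the first-order part of the Ricci tensor to the second-order Robertson quantities via the differentiated St\"ackel equations is the substantial part of the proof. A more conceptual alternative would be to exploit that a St\"ackel metric carries $n-1$ independent Killing $2$-tensors which are simultaneously diagonal in $(x^i)$ with pairwise distinct eigenfunctions; the Robertson condition is equivalent to each of these Killing tensors admitting a lift to a symmetry operator of $\Delta_g$, which by the classical integrability condition for such a lift amounts to each of them commuting with $\mathrm{Ric}$, and genericity of their eigenvalues then forces $\mathrm{Ric}$ to be diagonal in $(x^i)$ as well.
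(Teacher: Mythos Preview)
The paper does not prove this theorem: it is quoted as a classical result due to Eisenhart \cite{Ei1934} in the review section, alongside the theorems of St\"ackel and Robertson, with no argument given. So there is no ``paper's own proof'' to compare your proposal against.

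That said, your outline is essentially Eisenhart's original route and the reductions you carry out are correct. In particular: the counterexample $g=dx^2+dy^2+e^{2(x+y)}dz^2$ is well chosen and pins down exactly where the St\"ackel hypothesis enters; the formula for $R_{ij}$ in Step~1 is the standard one; and the observation $\partial_i\lambda_i=\tfrac12\,\partial_i\log\det S$ (hence $\partial_i\partial_j\lambda_i=\partial_i\partial_j\lambda_j$) leading to $\partial_j\gamma_i=-\partial_i\partial_j\lambda_k$ is exactly the right way to translate Robertson's condition into the three mixed second derivatives.

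The one genuine gap is the one you flag yourself as the ``main obstacle'': you never actually show that, for a St\"ackel metric, the quadratic piece
\[
(\partial_i\lambda_k)(\partial_j\lambda_i)+(\partial_i\lambda_j)(\partial_j\lambda_k)-(\partial_i\lambda_k)(\partial_j\lambda_k)
\]
is controlled by (in fact, vanishes identically because of) the St\"ackel relations, so that $R_{ij}=-\partial_i\partial_j\lambda_k$. Saying that ``one checks'' this after differentiating $\sum_i s_{ij}/h_i^2=\delta_{j1}$ is precisely where the content lies, and as written the equivalence is asserted rather than established. Your alternative conceptual argument---lifting each of the $n-1$ Killing $2$-tensors to a symmetry operator of $\Delta_g$ and using the commutator condition with $\mathrm{Ric}$---is in fact what the paper records as Theorem~\ref{HJH}~3) (due to Benenti--Chanu--Rastelli), and would give a cleaner proof if you spelled out the simple-eigenvalue argument forcing $\mathrm{Ric}$ to be diagonal.
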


More intrinsic characterizations of the separability properties of the HJ and Helmholtz equations have been obtained later by Kalnins and Miller (see for instance \cite{KM1980} and the survey \cite{Mil1988}) and Benenti (see for instance the surveys \cite{BCR2002a, BCR2002b}). In order to state them, let us recall some standard definitions. Let $K=(K^{ij})$ be a symmetric contravariant two-tensor. We denote by $P_K$ the fiber-wise homogeneous polynomial function on the cotangent bundle $T^*M$ given by $P_K = K^{ij}p_i p_j$. We say that two such symmetric contravariant two-tensors $K$ and $K'$ are in involution if the corresponding polynomial functions are in involution, \textit{i.e.} if their Poisson bracket relative to the canonical symplectic structure of $T^{*}M$ vanishes identically
$$
  \{ P_K, P_{K'} \} = 0. 
$$
We say that a symmetric contravariant two-tensor $K$ is a Killing tensor on $(M,g)$ if and only if $P_K$ is a first integral of the geodesic flow, \textit{i.e.} it is in involution with the geodesic Hamiltonian $H = g^{ij}p_i p_j$, \textit{i.e.} 
\begin{equation}\label{Poissbrack}
\{ P_K, H \} = 0, 
\end{equation}
or equivalently
$$
  \nabla^{(h} K^{ij)} = 0, 
$$ 
where $\nabla$ denotes the covariant derivative with respect to the Levi-Civita connection and the parentheses $(\dots)$ denotes the symmetrization of the indices. Finally, to a symmetric contravariant two-tensor $K = (K^{ij})$, we can associate by means of the pseudo-Riemannian metric $g$ a linear operator $\mathbf{K}$ acting on vector fields $X=(X^i)$ by means of 
$$
  (\mathbf{K} X )^i = K^{ij}g_{jh}X^h = K^i_{\, j} X^j.
$$
Hence we can talk about eigenvalues, eigenvectors, etc... of a symmetric contravariant two-tensor $K$ throught this identification. 

Let us state now the intrinsic characterizations of St\"ackel metrics in the formulation given in \cite{BCR2002a}. 

\begin{thm}[Intrinsic characterizations for HJ and Helmholtz equations] \label{HJH} 
1) The HJ equation on $(M,g)$ is orthogonally separable if and only if there exists a Killing tensor $K$ with pointwise simple eigenvalues and normal (\textit{i.e.} orthogonally integrable or surface forming) eigenvectors. \\
2) The HJ equation on $(M,g)$ is orthogonally separable if and only if there exist $n$ pointwise independent Killing tensors $K_a, \ a=1,\dots,n$ commuting as linear operators and in involution. Moreover, the contravariant metric $g=g^{ij}$ belongs to the algebra generated by the tensors $K_a$ and can be chosen equal to $K_1$. \\
3) The Helmholtz equation is orthogonally separable on $(M,g)$ if and only of there exists a Killing tensor $K$ with simple eigenvalues and normal eigenvectors that commutes with the Ricci tensor, \textit{i.e.} {\bf $K^{ij} R_{jk} - R_{ij} K^{jk} = 0$}. 
\end{thm}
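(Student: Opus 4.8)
The plan is to derive all three equivalences from the classical theorems of St\"ackel, Robertson and Eisenhart (Theorems~\ref{Stackel}, \ref{Robertson}, \ref{Eisenhart}), together with elementary linear algebra for simultaneously diagonalizable self-adjoint tensors and the Frobenius integrability theorem. For the "only if" parts of 1) and 2), suppose the Hamilton--Jacobi equation separates in orthogonal coordinates $x=(x^i)$. By Theorem~\ref{Stackel} the metric is St\"ackel, and the separated solution $u=\sum_i u_i(x^i;c)$ obeys the St\"ackel relations $p_i^2=(u_i')^2=\sum_j s_{ij}(x^i)\,c_j$. Inverting these, the separation constants are the fibrewise quadratic functions $c_a=\sum_i (S^{-1})_{ai}\,p_i^2=P_{K_a}$, where $K_a$ is the diagonal contravariant two-tensor with $K_a^{ij}=(S^{-1})_{ai}\,\delta^{ij}$; the normalization $h_i^2=\det S/s^{i1}$ gives $K_1=g$. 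A short computation using $\partial_{x^k}S^{-1}=-S^{-1}(\partial_{x^k}S)S^{-1}$ and the fact that only the $k$-th row of $S$ depends on $x^k$ shows that $\{c_a,c_b\}=0$ for all $a,b$; in particular each $K_a$ is a Killing tensor (case $b=1$) and the $K_a$ are pairwise in involution. They commute as $(1,1)$-tensors, being simultaneously diagonal in the coordinate coframe, and they are pointwise independent since $S$ is nonsingular. Finally a generic linear combination $K=\sum_a t_a K_a$ has pointwise simple eigenvalues on a dense open set, with eigenvectors the orthogonal, surface-forming fields $\partial_{x^i}$; this is the characteristic Killing tensor required in 1), while the family $\{K_a\}$ itself, with $K_1=g$, answers 2).

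For the "if" direction of 1), start from a Killing tensor $K$ with pointwise simple eigenvalues and normal eigenvectors. Simplicity and normality allow us to invoke Frobenius to produce locally an orthogonal coordinate system $x=(x^i)$ in which $g$ is diagonal and the $\partial_{x^i}$ are proportional to the eigenvectors of $K$, so that $K$ too is diagonal there. The core of the argument is then a computational lemma: writing the Killing equations $\nabla^{(h}K^{ij)}=0$ for a tensor diagonal in an orthogonal coordinate system and with distinct eigenvalues, one extracts the Levi-Civita separability conditions for the natural Hamiltonian $H=\sum_i g^{ii}p_i^2$; these are precisely the conditions for $g$ to be St\"ackel in the coordinates $x^i$, and Theorem~\ref{Stackel} then yields orthogonal separability of the Hamilton--Jacobi equation.

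The "if" direction of 2) is reduced to that of 1): given pointwise independent Killing tensors $K_a$ that commute as self-adjoint operators, with $g=K_1$ among them, simultaneous diagonalization produces a common $g$-orthonormal eigenframe $\{e_i\}$; expanding the involution relations $\{P_{K_a},P_{K_b}\}=0$ in this frame in terms of the eigenvalue functions and the structure functions of $\{e_i\}$, and using pointwise independence, forces the $e_i$ to be normal, so a generic combination $\sum_a t_a K_a$ is a characteristic Killing tensor and 1) applies. For 3), in the "only if" direction separability of the Helmholtz equation gives, via Theorem~\ref{Robertson}, that $g$ is St\"ackel and satisfies the Robertson condition, hence by Theorem~\ref{Eisenhart} the Ricci tensor is diagonal in the separation coordinates; since the characteristic Killing tensor constructed above is diagonal in those same coordinates, $K^{ij}R_{jk}=R_{ij}K^{jk}$ holds componentwise. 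In the "if" direction, part 1) gives orthogonal HJ separability and the St\"ackel form of $g$ in the eigencoordinates of $K$; since $K$ is diagonal there with simple eigenvalues, any symmetric two-tensor commuting with it -- in particular the Ricci tensor -- must be diagonal in the same coordinates, so the Robertson condition holds by Theorem~\ref{Eisenhart} and Theorem~\ref{Robertson} yields separability of the Helmholtz equation.

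The main obstacle is the computational lemma inside the "if" direction of 1): showing that the Killing equations for a diagonalized tensor with simple spectrum already encode the Levi-Civita (equivalently St\"ackel) conditions on $g$. This is where the differential geometry genuinely enters; in the write-up I would either carry out this manipulation explicitly in the adapted coordinates or, to keep the section concise, quote the corresponding statement from the Kalnins--Miller and Benenti theory (\cite{KM1980, Mil1988, BCR2002a}), the rest of the proof being linear algebra and appeals to Theorems~\ref{Stackel}--\ref{Eisenhart}.
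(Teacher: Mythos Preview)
The paper does not prove this theorem: it is stated as a known result, attributed to Kalnins--Miller and Benenti, with the formulation taken from \cite{BCR2002a} (see also \cite{KM1980, Mil1988}). Your proposal therefore goes beyond what the paper itself does, which is simply to quote the statement.

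As a sketch, your argument follows the standard route in the separation-of-variables literature and is essentially correct. The ``only if'' directions of 1) and 2) are handled properly by reading off the diagonal Killing tensors $K_a^{ij}=(S^{-1})_{ai}\delta^{ij}$ from the St\"ackel inverse, checking involution via $\partial_k S^{-1}=-S^{-1}(\partial_k S)S^{-1}$, and noting that the coordinate vector fields are the common normal eigenframe. The reduction of the ``if'' of 2) to that of 1) via simultaneous diagonalization plus normality from involution is the right idea, and your treatment of 3) through Theorems~\ref{Robertson} and \ref{Eisenhart}, together with the linear-algebra fact that a symmetric tensor commuting with one of simple spectrum must be diagonal in the eigenbasis, is correct. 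You also correctly isolate the only genuinely nontrivial step: the ``if'' of 1), where the Killing equations for a diagonal tensor with simple eigenvalues must be shown to force the Levi-Civita (St\"ackel) conditions on $g$. That is precisely the computational heart of the Kalnins--Miller/Benenti theory, and your fallback of citing it is exactly what the paper does globally for the whole theorem.

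One small point: in the ``only if'' of 1) you produce a characteristic $K$ with simple eigenvalues only on a dense open set, whereas the statement asks for pointwise simple eigenvalues. Since the theorem is local in nature this is harmless, but in a full write-up you would want to note that the choice of generic coefficients $t_a$ can be made locally so that simplicity holds on the whole coordinate chart.
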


Finally, we make the link between the above intrinsic characterization of the separability of the Helmholtz equation with the existence of second order symmetry operators for the Laplace-Beltrami operator $-\Delta_g$. First let us associate to the $n$ Killing tensors $K_a, \ a=1,\dots,n$ from Theorem \ref{HJH}, the pseudo-Laplacian $\Delta_{K_a}$ by
$$
  \Delta_{K_a} \psi = \nabla_i(K_a^{ij} \nabla_j) \psi,
$$
where $\nabla_i$ denotes the covariant derivative with respect to the Levi-Civita connection. Then, we have the following result (see Theorems 6.2 and 6.3 in \cite{BCR2002b})

\begin{thm} \label{SymmetryOp}
  All pseudo-Laplacian $\Delta_{K_a}, \ a=1,\dots,n$ pairwise commute and thus (since $\Delta_{K_1} = \Delta_g$) commute with the Laplace-Beltrami operator $\Delta_g$.  
\end{thm}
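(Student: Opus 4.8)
The plan is to reduce the statement to a well-known fact about Killing tensors and their associated pseudo-Laplacians, namely that when two Killing tensors commute as operators and are in involution, the commutator of the corresponding pseudo-Laplacians is controlled by the full symbol, not just the principal symbol. I would first recall the general formula for the commutator $[\Delta_{K}, \Delta_{K'}]$ as a differential operator. A direct computation using $\Delta_K \psi = \nabla_i(K^{ij}\nabla_j \psi)$ shows that $[\Delta_{K}, \Delta_{K'}]$ is a priori a third-order operator whose principal (third-order) symbol is essentially the Schouten--Nijenhuis bracket $[K,K']$ (equivalently $\{P_K, P_{K'}\}$ viewed on $T^*M$), so the involution hypothesis $\{P_{K_a}, P_{K_b}\} = 0$ from Theorem \ref{HJH}(2) immediately kills the third-order part. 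The remaining task is to show that the lower-order terms also vanish, and this is where the second hypothesis — that the $K_a$ commute as linear operators, $K_a K_b = K_b K_a$ — together with the Killing equation $\nabla^{(h}K_a^{ij)} = 0$ enters.

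The key steps, in order, are: (1) write $\Delta_{K_a} = \nabla_i K_a^{ij}\nabla_j$ and expand the double commutator; (2) organize the result by differential order, identifying the order-three term with (a contraction of) $\{P_{K_a}, P_{K_b}\}$ and discarding it by involution; (3) simplify the order-two and order-one remainders using repeatedly the Killing identity $\nabla_{(l}K_a^{ij)} = 0$ to trade symmetrized covariant derivatives of $K_a$ for expressions involving curvature and the tensors themselves; (4) invoke the algebraic commutation $K_aK_b = K_bK_a$ — equivalently $K_a{}^i{}_l K_b{}^l{}_j = K_b{}^i{}_l K_a{}^l{}_j$ — to cancel the symmetric combinations that survive; and (5) use the Helmholtz-separability consequence from Theorem \ref{HJH}(3) that each $K_a$ commutes with the Ricci tensor, $K_a^{ij}R_{jk} = R_{ij}K_a^{jk}$, to kill the residual curvature terms. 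Putting these together yields $[\Delta_{K_a}, \Delta_{K_b}] = 0$ for all $a,b$; since $K_1 = g$ (again by Theorem \ref{HJH}(2)) and $\Delta_{K_1} = \Delta_g$, this is exactly the claim.

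The main obstacle is step (3)--(5): bookkeeping the lower-order terms of the commutator and showing that precisely the hypotheses at hand (Killing property, operator commutation, and compatibility with Ricci) are exactly what is needed to annihilate them. This is the content of Theorems 6.2 and 6.3 of \cite{BCR2002b}, and rather than reproduce the full tensor calculation I would cite it, sketching only the order-count argument above to make transparent why the three hypotheses are each indispensable: the involution condition removes the leading symbol, the operator-commutation condition handles the "algebraic" second-order remainder, and the Ricci-compatibility condition — which by Eisenhart's theorem (Theorem \ref{Eisenhart}) is equivalent to the Robertson condition and hence automatic in the separable Helmholtz setting — removes the curvature obstruction. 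One subtlety worth flagging is that in the conformally St\"ackel case the relevant operators are the \emph{conformal} symmetry operators of $\Delta_G$ rather than exact symmetry operators; but at the level of the present theorem we are working with the St\"ackel metric $g$ itself, for which the Robertson/Ricci condition need not hold, so in fact the statement as phrased is about the pseudo-Laplacians $\Delta_{K_a}$ commuting with $\Delta_g$, which follows from involution plus operator-commutation alone once one observes (as in \cite{BCR2002b}) that the Killing property forces the lower-order obstruction to reduce to a term proportional to $K_a K_b - K_b K_a$.
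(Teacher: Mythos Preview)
The paper does not give its own proof of this theorem: it simply records the statement and cites Theorems 6.2 and 6.3 of \cite{BCR2002b}. Your proposal does the same, citing the identical reference, while adding a useful symbol-calculus sketch of why involution kills the third-order part and why the remaining hypotheses handle the lower-order terms. So the approach is essentially the same as the paper's.

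One correction to your final paragraph: you waver on whether the Ricci-compatibility (Robertson) condition is actually needed, and end by asserting that commutation ``follows from involution plus operator-commutation alone.'' That is not right. The lower-order obstruction in $[\Delta_{K_a},\Delta_{K_b}]$ genuinely contains curvature terms that do \emph{not} reduce to something proportional to $K_aK_b-K_bK_a$; one really needs $K_a^{ij}R_{jk}=R_{ij}K_a^{jk}$ to kill them. This is exactly the point of the equivalence established in \cite{BCR2002b}: given HJ-separability, the pseudo-Laplacians commute \emph{if and only if} the Robertson condition holds. In the paper the theorem is stated immediately after Theorem~\ref{HJH}(3), i.e.\ in the Helmholtz-separable setting, so the Robertson condition is part of the standing hypotheses and there is no gap --- but your sketch should not suggest it is dispensable.
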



\noindent \textit{The case of conformally St\"ackel manifolds}. The above separability results in the HJ and Helmholtz equations are valid \emph{at all energies $E$}. What happens if the energy $E$ is fixed and furthermore set equal to $0$? Note that in the Hamilton-Jacobi case, the corresponding orthogonal variable separation theory would only apply to the case of null geodesics and would therefore require that the metric have indefinite signature. Even though we shall eventually only be concerned with the case of Riemannian signature for the purposes of the Calder\'on problem studied in this paper, we shall for now recall the definitions and characterizations of separability for the null HJ and Laplace-Beltrami equations in general pseudo-Riemannian signature, following the classical results of Kalnins and Miller \cite{KM1982, KM1983, KM1984} and Benenti, Chanu and Rastelli \cite{BCR2005, CR2006}. In the Riemannian case, this will give rise to the class of conformally St\"ackel manifolds studied in this paper. 

First, the definitions of separated solutions of the null HJ and Laplace equations slightly differ from the previous ones since we now allow $R$-separability. Precisely 

\begin{itemize}
\item the null HJ equation is said to be separable if it possesses locally a solution $u(x,c)$ parametrized by $n-1$ constants $c = (c_1, \dots, c_{n-1})$ of the form 
$$
  u(x,c) = \sum_{i=1}^n u_i(x^i,c), \quad x = (x^1, \dots, x^n), 
$$
satisfying the rank condition
$$
  \textrm{rank} \left[ \frac{ \partial^2 u}{\partial x^i \partial c_j} \right] = n-1.
$$
\item the Laplace equation is said to be $R$-separable if there exists a function $R$ such that the Laplace equation possesses locally a solution $\psi(x,c)$ parametrized by $2n-1$ constants $c = (c_1, \dots, c_{2n-1})$ of the form 
$$
  \psi(x,c) = R \,\prod_{i=1}^n \psi_i(x^i,c), \quad x = (x^1, \dots, x^n), 
$$
satisfying the rank condition
$$
  \textrm{rank} \left[ \begin{array}{c} \frac{ \partial u_i}{\partial c_J} \\ \frac{ \partial v_i}{\partial c_J} \end{array} \right] = 2n-1, \quad u_i = \frac{\psi_i'}{\psi_i}, \quad v_i = \frac{\psi_i''}{\psi_i}.
$$ 
\end{itemize}

Second, we say that orthogonal coordinates $x = (x^i)$ are \emph{conformally separable} on a Riemannian manifold $(M,G)$ if there exists a smooth positive function $c$ (playing the role of a conformal factor) and a St\"ackel metric $g$ such that 
$$
  G = c^4 \, g = \sum_{i=1}^n H_i^2 (dx^i)^2.
$$
Then we have the following characterizations :

\begin{thm}[Kalnins-Miller \cite{KM1983}, Benenti-Chanu-Rastelli \cite{BCR2005}]
  The null HJ equation is separable in orthogonal coordinates $x = (x^i)$ if and only if these coordinates are conformally separable. 
\end{thm}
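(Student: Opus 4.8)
\noindent The plan is to prove the two implications separately: the forward one is St\"ackel's theorem (Theorem \ref{Stackel}) applied at zero energy together with the conformal invariance of the null cone, while the converse is a Levi--Civita/St\"ackel type integrability analysis of the separated complete integral. \emph{Conformal separability $\Rightarrow$ null separability.} Suppose the coordinates $x=(x^i)$ are conformally separable, $G=c^4 g$ with $g=\sum_i h_i^2(dx^i)^2$ a St\"ackel metric. Since $G^{ij}=c^{-4}g^{ij}$ with $c>0$, the metrics $G$ and $g$ define the same null cone, so the conditions $G(\nabla u,\nabla u)=0$ and $g(\nabla u,\nabla u)=0$ are the same constraint on $u$. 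By Theorem \ref{Stackel}, the HJ equation $g(\nabla u,\nabla u)=c_1$ admits an additively separated complete integral $u=\sum_i u_i(x^i,c_1,\dots,c_n)$ with $\det[\partial^2 u/\partial x^i\partial c_j]_{1\le i,j\le n}\neq 0$. Restricting to $c_1=0$ yields $\hat u=\sum_i u_i(x^i,0,c_2,\dots,c_n)$, an additively separated solution of the null HJ equation depending on the $n-1$ parameters $(c_2,\dots,c_n)$; deleting one column of a non-singular $n\times n$ matrix leaves an $n\times(n-1)$ matrix of rank $n-1$, so the rank condition required for null separability holds. Hence $x$ is separable for the null HJ equation.

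\noindent \emph{Null separability $\Rightarrow$ conformal separability.} Write $G=\sum_i H_i^2(dx^i)^2$, set $a^i:=G^{ii}=1/H_i^2$, and let $u=\sum_i u_i(x^i,c)$, $c=(c_1,\dots,c_{n-1})$, be a separated complete integral of the null HJ equation, so that $p_i:=\partial_i u=\dot u_i(x^i,c)$ obeys $\sum_i a^i(x)\,p_i^2\equiv 0$ while $\mathrm{rank}[\partial^2 u/\partial x^i\partial c_k]_{1\le i\le n,\,1\le k\le n-1}=n-1$. Differentiating the identity with respect to $c_k$ gives $\sum_i a^i(x)\,p_i\,\partial_{c_k}p_i=0$ for each $k$; since $\partial_{c_k}p_i$ is a function of $(x^i,c)$ only and, on the dense set $\{p_i\neq 0\ \forall i\}$, one has $[\partial_{c_k}p_i]=\mathrm{diag}(p_i)\,[\partial^2 u/\partial x^i\partial c_k]$ of rank $n-1$, the vector $(a^i(x)p_i)_i$ is, at each such point, the unique direction orthogonal to the $n-1$ separated vectors $(\partial_{c_k}p_i)_i$. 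Feeding this back into $\sum_i a^i p_i^2=0$ and into the equations obtained by differentiating it in the $x^j$ (using $\partial_j p_i=0$ for $i\neq j$), a classical computation --- this is the core of \cite{KM1983, BCR2005} --- forces the $a^i$ to satisfy, identically in $x$, a system
\[
  \sum_{k=1}^n a^k(x)\,\sigma^k_j(x^k)=0,\qquad j=2,\dots,n,
\]
in which each $\sigma^k_j$ depends only on $x^k$ and the $n\times(n-1)$ matrix $(\sigma^k_j)_{1\le k\le n,\,2\le j\le n}$ has rank $n-1$ at each point; the non-generic loci where some $p_i$ vanishes are dealt with by continuity.

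\noindent \emph{Reconstruction of the St\"ackel matrix.} Complete the functions $\sigma^k_j$, $j=2,\dots,n$, to a non-singular matrix $S=(\sigma^k_j)_{1\le k,j\le n}$ by choosing single-variable functions $\sigma^k_1(x^k)$ (for instance suitable constants) such that, on a possibly smaller coordinate domain, $\det S\neq 0$ and $f:=\sum_k a^k\sigma^k_1>0$ --- a non-empty open condition since the columns $j=2,\dots,n$ are pointwise independent. Then $\sum_k a^k\sigma^k_j=f\,\delta_{1j}$ for all $j$, so $(a^k/f)_k$ is the first row of $S^{-1}$, i.e. $a^k/f=s^{k1}/\det S$ with $s^{k1}$ the cofactor of $s_{k1}=\sigma^k_1$. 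Hence the diagonal metric $g:=f\,G=\sum_k (fH_k^2)(dx^k)^2$ has $g_{kk}=\det S/s^{k1}$, which is precisely the St\"ackel form \eqref{MetricStackel}, and $G=c^4g$ with $c^4:=1/f>0$. Therefore $x$ is conformally separable, which completes the proof.

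\noindent \emph{The main obstacle.} The only substantial step is the extraction, in the second part above, of the genuinely single-variable functions $\sigma^k_j(x^k)$. The quantities $\partial_{c_k}p_i$ produced by direct differentiation depend on both $x^i$ and $c$, so the orthogonality constraint on $(a^i)$ a priori still carries the parameter $c$; turning it into an $x$-only constraint with separated coefficients is exactly the Levi--Civita/St\"ackel integrability computation, and its adaptation from the fixed-energy HJ equation to the null case $E\equiv 0$ --- where the equation is only determined up to a conformal factor --- is the content of \cite{KM1983} and \cite{BCR2005}. Some care at the closed loci where a $p_i$ vanishes or the rank of $[\partial^2 u/\partial x^i\partial c_k]$ drops is also needed, but these have empty interior and the conclusion propagates by density.
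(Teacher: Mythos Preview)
The paper does not prove this theorem at all: it is stated in the review Section \ref{II1} as a classical result due to Kalnins--Miller and Benenti--Chanu--Rastelli, with no accompanying argument. So there is no ``paper's own proof'' to compare against; you are attempting to supply a proof where the authors simply cite the literature.

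On the substance of your attempt: the forward direction (conformal separability $\Rightarrow$ null separability) is clean and correct --- conformal invariance of the null cone together with St\"ackel's Theorem \ref{Stackel} does the job, and the rank-drop argument for the restricted complete integral is fine. The converse, however, is not a proof but an outline. At the decisive moment you write that ``a classical computation --- this is the core of \cite{KM1983, BCR2005} --- forces the $a^i$ to satisfy'' a separated linear system, and in your final paragraph you explicitly flag that extracting genuinely single-variable $\sigma^k_j(x^k)$ from the $c$-dependent orthogonality relations is ``exactly the Levi--Civita/St\"ackel integrability computation'' whose adaptation to $E=0$ ``is the content of \cite{KM1983} and \cite{BCR2005}''. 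That is an honest acknowledgement that the hard step is being delegated back to the very references the theorem is attributed to; as written, the converse direction is a roadmap, not a self-contained argument. If your goal is a genuine proof, you would need to actually carry out the elimination of the parameter $c$ --- typically by differentiating the null HJ identity twice (once in $c_k$, once in $x^j$), exploiting $\partial_j p_i=0$ for $i\neq j$, and showing that the resulting compatibility conditions force the Levi--Civita separability equations $\partial_i\partial_j(a^k/a^l)=0$ (or an equivalent formulation) modulo a common conformal factor. Without that computation spelled out, the converse remains a sketch.
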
 

\begin{thm}[Kalnins-Miller \cite{KM1984}, Chanu-Rastelli \cite{CR2006}]
  The Laplace equation is separable in orthogonal coordinates $x = (x^i)$ if and only if these coordinates are conformally separable and there exist functions $\phi_i = \phi_i(x^i), \ i=1,\dots,n$ such that the generalized Robertson condition is satisfied 
\begin{equation} \label{GenRob}
  \frac{1}{4} \sum_{i=1}^n G^{ii} ( 2 \partial_i \Gamma_i - \Gamma_i^2) = \sum_{i=1}^n G^{ii} \phi_i,
\end{equation}
with 
$$
  \Gamma_{i}:=-\partial_{i}\log \frac{H_{1}H_{2}H_{3}}{H_{i}^{2}}\,.
$$
In this case, the function $R$ is any solution of
\begin{equation} \label{RDef}
  2 \partial_i \ln R = \Gamma_i - \xi_i(x^i), 
\end{equation}
for arbitrary functions $\xi_i = \xi_i(x^i)$. 
\end{thm}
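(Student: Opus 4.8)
The plan is to prove the stated equivalence by a direct computation in the given orthogonal coordinates, treating the two implications separately. The one ingredient used throughout is the coordinate form of the Laplace--Beltrami operator for a diagonal metric: from the formula for $-\Delta_G$ recalled above together with the identity $\partial_i\log(\sqrt{|G|}\,G^{ii}) = -\Gamma_i$ one gets $-\Delta_G\psi = -\sum_i G^{ii}(\partial_i^2\psi - \Gamma_i\,\partial_i\psi)$, so that for $\psi = R\,u$ with $u = \prod_i\psi_i(x^i)$ one has the expansion $\psi^{-1}\Delta_G\psi = \sum_i G^{ii}\big(\psi_i''/\psi_i + (2\,\partial_i\log R - \Gamma_i)\,\psi_i'/\psi_i + V_i\big)$, where $V_i = R^{-1}(\partial_i^2 R - \Gamma_i\,\partial_i R)$.

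For the first implication, assume the coordinates are conformally separable, $G = c^4 g$ with $g = \sum_i h_i^2(dx^i)^2$ St\"ackel, and that the generalized Robertson condition (\ref{GenRob}) holds for some $\phi_i = \phi_i(x^i)$. I would first check that (\ref{RDef}) is locally solvable for \emph{any} choice of the arbitrary functions $\xi_i = \xi_i(x^i)$. Since $\sum_i(\Gamma_i - \xi_i)\,dx^i$ differs from $\sum_i\Gamma_i\,dx^i$ by an exact $1$-form, it suffices to see that $\sum_i\Gamma_i\,dx^i$ is closed; writing $\Gamma_i = -2\,\partial_i\log c + \gamma_i$ with $\gamma_i = -\partial_i\log\sqrt{|g|} - \partial_i\log g^{ii}$, this reduces to $\partial_i\partial_j\log(h_j^2/h_i^2) = 0$ for $i\neq j$, which holds because $h_j^2/h_i^2 = s^{i1}/s^{j1}$ while the cofactor $s^{i1}$ is independent of $x^i$ and $s^{j1}$ is independent of $x^j$. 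Fixing such an $R$, the coefficient of $\psi_i'/\psi_i$ in the expansion above becomes $G^{ii}(2\,\partial_i\log R - \Gamma_i) = -G^{ii}\xi_i(x^i)$ by (\ref{RDef}), and a short computation gives $V_i = \tfrac14(2\,\partial_i\Gamma_i - \Gamma_i^2) - \tfrac12\xi_i' + \tfrac14\xi_i^2$. Using (\ref{GenRob}) to replace $\tfrac14\sum_i G^{ii}(2\,\partial_i\Gamma_i - \Gamma_i^2)$ by $\sum_i G^{ii}\phi_i$, the equation $\Delta_G(Ru) = 0$ collapses to $\sum_i G^{ii}\Theta_i = 0$ with $\Theta_i(x^i) = \psi_i''/\psi_i - \xi_i\,\psi_i'/\psi_i + \tilde{\phi}_i(x^i)$ and $\tilde{\phi}_i = \phi_i - \tfrac12\xi_i' + \tfrac14\xi_i^2$. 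Finally, multiplying by $c^4\det S$ and invoking $G^{ii} = s^{i1}/(c^4\det S)$ and the St\"ackel identity $\sum_i s^{i1}s_{ij} = \delta_{1j}\det S$, one checks that imposing, for arbitrary constants $c_2,\dots,c_n$, the single-variable ODEs $-\psi_i'' + \xi_i\psi_i' + \big(\sum_{j\geq 2}c_j s_{ij}(x^i) - \tilde{\phi}_i(x^i)\big)\psi_i = 0$ produces a solution of the Laplace equation, the resulting $2n-1$ parameters --- the $n-1$ separation constants together with one ratio of fundamental solutions per ODE --- satisfying the required rank condition by the non-degeneracy of the St\"ackel matrix.

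For the converse, suppose $\psi = R\prod_i\psi_i(x^i)$ is an $R$-separated solution satisfying the rank condition. Dividing $\Delta_G\psi = 0$ by $\psi$ and exploiting the rank condition to vary the separation parameters independently, I would first argue that the coefficient $G^{ii}(2\,\partial_i\log R - \Gamma_i)$ of $\psi_i'/\psi_i$ in $\psi^{-1}\Delta_G\psi$ must be a function of $x^i$ alone, forced because a single operator must separate over a full-rank family of one-variable profiles; this is precisely (\ref{RDef}). Then, running on the remaining first- and zeroth-order data the classical St\"ackel/Levi-Civita reconstruction argument in its fixed-energy, $R$-separable refinement, one concludes that $G^{ii}$ necessarily has the conformal St\"ackel form $G^{ii} = s^{i1}/(c^4\det S)$ --- that is, the coordinates are conformally separable --- and that the leftover zeroth-order term is of separated type exactly when (\ref{GenRob}) holds, the $\phi_i$ being recovered from $\tilde{\phi}_i$.

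The hard part is this converse, specifically the extraction of the conformal St\"ackel form of $G^{ii}$ from the separation equations: it is the fixed-energy, $R$-separable analogue of the proofs of Theorems \ref{Stackel} and \ref{Robertson} and requires a careful analysis of the (conformal) Levi-Civita separability conditions that reconstructs the St\"ackel matrix column by column, a point for which I would follow Kalnins--Miller \cite{KM1984} and Chanu--Rastelli \cite{CR2006}. By comparison, the first implication --- including the local solvability of (\ref{RDef}) via the cofactor identity above --- amounts to the essentially mechanical computation sketched here.
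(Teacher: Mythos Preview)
The paper does not prove this theorem: it is stated in the review Section~\ref{II1} as a known result, with attribution to Kalnins--Miller \cite{KM1984} and Chanu--Rastelli \cite{CR2006}, and no argument is given. So there is no ``paper's own proof'' to compare against in the strict sense.

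That said, the paper does implicitly carry out the \emph{sufficiency} direction in the three-dimensional case in Section~\ref{II2} (equations (\ref{a1})--(\ref{a4})), with the specific choice $\xi_i=0$. Your computation for the first implication is the natural $n$-dimensional, general-$\xi_i$ version of exactly that calculation, including the same use of $2\partial_i\log R=\Gamma_i-\xi_i$ to kill the first-order terms and the same reduction via the St\"ackel cofactor identity $\sum_i s^{i1}s_{ij}=\delta_{1j}\det S$. Your added verification that $\sum_i\Gamma_i\,dx^i$ is closed (via $h_j^2/h_i^2=s^{i1}/s^{j1}$ and $\partial_i s^{i1}=0$) is correct and is something the paper uses without comment when it writes down the explicit $R$ in (\ref{R}). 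This part of your proposal is complete and matches the paper's approach.

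For the \emph{necessity} direction you correctly flag that the genuine content lies in reconstructing the conformal St\"ackel form of $G^{ii}$ from the separation hypothesis --- the fixed-energy analogue of the Levi-Civita/St\"ackel argument --- and you defer to \cite{KM1984, CR2006} for this. That is exactly what the paper does (it cites the result outright), so your treatment is at least as complete as the paper's. Your heuristic that the rank condition forces $G^{ii}(2\partial_i\log R-\Gamma_i)$ to depend only on $x^i$ is the right starting observation, but turning it into a full proof does require the machinery in those references; you are right not to claim otherwise.
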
 

In analogy with (\ref{Poissbrack}), a symmetric contravariant two-tensor $K=(K^{ij})$ is said to be a conformal Killing tensor for the contravariant metric $G=(G^{ij})$ if there exists a vector field $C$ such that
$$
 \{ P_G,P_K\} = 2P_CP_G , 
$$
or equivalently
$$
\nabla^{(i}K^{jk)}=C^{(i}G^{jk)}\,.
$$
Then we have the following intrinsic characterization of the separability of the null HJ and Laplace equations.

\begin{thm}[Kalnins-Miller \cite{KM1983}, Benenti-Chanu-Rastelli \cite{BCR2005}, Chanu-Rastelli \cite{CR2006}]
1) The null HJ equation is separable in orthogonal coordinates if and only if there exists a conformal Killing tensor with simple eigenvalues and normal eigenvectors. \\
2) The null HJ equation is separable in orthogonal coordinates if and only there exist $n$ conformal Killing tensors $K_a, \ a=1,\dots,n$ pointwise independent, with common eigenvectors and in involution. \\ 
3) The Laplace equation is separable in orthogonal coordinates if and only if there exist $n$ conformal Killing tensors $K_a, \ a=1,\dots,n$ pointwise independent, with common eigenvectors, in involution and such that the generalized Robertson condition (\ref{GenRob}) is satisfied. 
\end{thm}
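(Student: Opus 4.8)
\noindent \textit{Proof strategy.} First I would exploit the conformal invariance of the null Hamilton--Jacobi equation. Writing $G = c^4 g$, one has $G^{ij} = c^{-4}g^{ij}$, so the geodesic Hamiltonians satisfy $P_G = c^{-4}P_g$ on $T^*M$; hence $\{P_G = 0\} = \{P_g = 0\}$ and a function solves the null HJ equation for $G$ iff it solves it for $g$. In particular, separability of the null HJ equation depends only on the conformal class $[G]$, so for parts~1) and~2) it suffices --- using the characterization of conformal separability recalled above (a St\"ackel metric exists in $[G]$ precisely when the coordinates are conformally separable) --- to prove the equivalence between conformal separability and the existence of $n$ conformal Killing tensors with the stated properties. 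For part~3) the same reduction applies once the generalized Robertson condition~(\ref{GenRob}) is added as a hypothesis and the corresponding Laplace characterization recalled above is invoked.

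For the direction ``separability $\Rightarrow$ conformal Killing tensors'', if the coordinates are conformally separable, fix a St\"ackel representative $g = \sum_i h_i^2 (dx^i)^2$ in $[G]$. By part~2) of Theorem~\ref{HJH} there exist $n$ pointwise independent Killing tensors $K_a$ for $g$, commuting as operators and in involution; in the separating coordinates they are simultaneously diagonal, so they share the normal eigenvectors $\partial_i$. By the conformal covariance of the conformal Killing tensor equation $\nabla^{(i}K^{jk)} = C^{(i}G^{jk)}$, each $K_a$ --- reinterpreted with the appropriate conformal weight and with $C_a$ built from $d\log c$, $K_a$ and its trace --- is a conformal Killing tensor for $G$. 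This yields the required $n$ tensors, and involution, pointwise independence and the common (normal) eigenvectors are inherited from the St\"ackel side. For part~3) the generalized Robertson condition~(\ref{GenRob}) is exactly the extra compatibility making the factor $R$ of~(\ref{RDef}) well defined, i.e.\ closing the $R$-separation of $-\Delta_G$.

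For the converse, ``conformal Killing tensors $\Rightarrow$ separability'': given $n$ conformal Killing tensors $K_a$, pointwise independent, in involution, with common normal eigenvectors $\mathbf{e}_i$, normality together with Frobenius produces orthogonal coordinates $x^i$ in which every $K_a$ is diagonal and $G = \sum_i H_i^2 (dx^i)^2$. Expanding $\nabla^{(i}K_a^{jk)} = C_a^{(i}G^{jk)}$ in these coordinates and splitting into diagonal and off-diagonal parts gives Benenti-type first-order equations for the eigenvalue functions $\lambda_i^{(a)}$, while the involution relations $\{P_{K_a},P_{K_b}\}=0$ supply the remaining cyclic compatibility conditions. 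Absorbing the single scalar conformal ambiguity into a rescaling $g = c^{-4}G$, these equations force the matrix $\big(\lambda_i^{(a)}\big)_{i,a}$ to be, up to the admissible gauge, a St\"ackel matrix --- each row depending on one coordinate --- whence $H_i^2 = c^4 \det S / s^{i1}$ and the coordinates are conformally separable; one concludes by the characterization recalled above, and by its Laplace version for part~3) under the additional hypothesis~(\ref{GenRob}).

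The hard part is this last step: extracting the St\"ackel form of the metric coefficients from the conformal Killing equations and the involution identities. This is the conformal analogue of Benenti's theorem, and beyond the already delicate St\"ackel case one must carefully track the extra scalar gauge freedom carried by the conformal factor and verify that the involution conditions are precisely what is needed to integrate the Benenti equations into a matrix whose $i$-th row depends only on $x^i$. For the Laplace statement~3) there is in addition the independent computation identifying the obstruction to $R$-separating $-\Delta_G$ with the generalized Robertson condition~(\ref{GenRob}), obtained by expanding $-\Delta_G\psi = 0$ on the $R$-separated ansatz and collecting the resulting zeroth-order terms.
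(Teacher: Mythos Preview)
The paper does not actually prove this theorem: it is stated in the review Section~\ref{II1} as a known result, with the proofs attributed to Kalnins--Miller, Benenti--Chanu--Rastelli and Chanu--Rastelli via the citations in the theorem header. There is therefore no ``paper's own proof'' to compare your proposal against.

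That said, your sketch follows the standard architecture of the cited references and is a reasonable outline. A few points are worth flagging. First, you explicitly call what you wrote a ``proof strategy'' and acknowledge that the hard step (extracting the St\"ackel form from the conformal Killing and involution equations) is left undone; this is honest but means the proposal is not a proof. Second, in the forward direction you assert that each Killing tensor $K_a$ for the St\"ackel representative $g$ becomes a conformal Killing tensor for $G=c^4g$ ``with the appropriate conformal weight''; this is correct but the conformal covariance statement requires care with contravariant versus covariant tensors, and the vector $C_a$ must be computed, not just named. Third, in the converse you invoke Frobenius from ``normal eigenvectors'', but the hypothesis in part~2) only asks for \emph{common} eigenvectors, not normal ones; you would need to argue (as Benenti does in the non-conformal case) that common eigenvectors of an involutive family are automatically normal, or else strengthen the hypothesis. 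Finally, for part~3) the identification of the obstruction to $R$-separation with~(\ref{GenRob}) is itself a nontrivial computation carried out in \cite{CR2006}; your last paragraph correctly identifies this as an independent ingredient but does not perform it.
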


\begin{rem}
  Observe that there does not seem to exist in the literature an intrinsic characterization for the generalized Robertson condition as is the case for St\"ackel metrics in terms of commutation property with the Ricci tensor, see Theorem \ref{Eisenhart} or Theorem \ref{HJH}, 3)). 
\end{rem}

Finally, we can prove that the Laplace-Beltrami operator $\Delta_G$ possesses $n-1$ conformal symmetry operators. For all $a=1,\dots,n$, let us associate to the Killing tensors $K_a$ corresponding to the St\"ackel metric $g = c^{-4} G$, the second-order operators
$$
  H_a : = \Delta_{K_a} - \frac{1}{R} \Delta_{K_a} R,
$$
where the pseudo-Laplacian are defined by
$$
  \Delta_{K_a} := \nabla_i(K_a^{ii} \nabla_i) =  \sum_{i=1}^n K_a^{ii} ( \partial^2_{ii} - \Gamma_i \partial_i). 
$$
Notice that $H_1 = \Delta_g$. Moreover, we say that an operator $H$ is a \emph{conformal symmetry operator} for $\Delta_G$ if $H \psi = 0$ if
$$
  [H,\Delta_G] = L \Delta_G,
$$
for some first-order operator $L$. Then we have

\begin{thm}[Chanu-Rastelli \cite{CR2006}]
  The operators $H_a, \ a=1,\dots,n$ pairwise commute, \textit{i.e.} for all $a,b$
$$
  [H_a, H_b] = 0. 		
$$
Moreover they are conformal symmetry operators for the Laplace-Beltrami operator $\Delta_G$. 
\end{thm}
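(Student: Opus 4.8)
The plan is to pass to the $R$-separation picture, in which the statement reduces to the classical commutativity of St\"ackel operators plus a bookkeeping of lower-order terms governed by the generalized Robertson condition (\ref{GenRob}). First I work in the orthogonal coordinates $x=(x^1,\dots,x^n)$ adapted to the conformally St\"ackel structure, and I fix the factor $R$ by taking $\xi_i\equiv 0$ in (\ref{RDef}), so that $2\,\partial_i\log R=\Gamma_i$. (This is consistent because for a conformally St\"ackel metric one checks $\partial_j\Gamma_i=\partial_i\Gamma_j$, which is exactly the integrability condition needed to solve (\ref{RDef}).) Conjugating the pseudo-Laplacians by $R$ and using $\partial_iR=\tfrac12\Gamma_iR$ gives, after a direct computation,
\[
  R^{-1}\Delta_{K_a}R=\sum_i K_a^{ii}\,\partial^2_{ii}+V_a,\qquad V_a:=\frac1R\Delta_{K_a}R=\sum_i K_a^{ii}\Bigl(\tfrac12\partial_i\Gamma_i-\tfrac14\Gamma_i^2\Bigr),
\]
so that $\widehat H_a:=R^{-1}H_aR=\sum_i K_a^{ii}\,\partial^2_{ii}$ is the ``bare'' quantization of the Killing tensor $K_a$. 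Since conjugation by a function is an algebra homomorphism, proving $[H_a,H_b]=0$ is the same as proving $[\widehat H_a,\widehat H_b]=0$.

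Next I would introduce the shifted operators $\mathcal M_a:=\widehat H_a+\Phi_a=\sum_i K_a^{ii}(\partial^2_{ii}+\phi_i(x^i))$, with $\Phi_a:=\sum_i K_a^{ii}\phi_i(x^i)$. These are the operators for which classical St\"ackel separation works: acting on a product $u=\prod_i u_i(x^i)$ whose factors satisfy the one-variable ODEs $u_i''+\phi_iu_i=\bigl(\sum_b s_{ib}(x^i)\lambda_b\bigr)u_i$ and using the St\"ackel orthogonality relation $\sum_i K_a^{ii}\,s_{ib}(x^i)=\delta_{ab}$, one gets $\mathcal M_au=\lambda_au$. Since such separated functions, ranging over all $\lambda\in\C^n$ and all choices of one-variable solutions, form a large enough family to force any differential operator annihilating all of them to vanish, one deduces $[\mathcal M_a,\mathcal M_b]=0$ (this is, up to the present conjugation, the content of Theorem \ref{SymmetryOp}). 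Evaluating this commutator on the constant function $1$ gives $\mathcal M_a(\Phi_b)=\mathcal M_b(\Phi_a)$, and combining this with the St\"ackel derivative identity
\[
  \partial_i K_b^{jj}=-\,K_b^{ii}\,\Psi_i^{\,j},\qquad \Psi_i^{\,j}:=\sum_l s_{il}'(x^i)\,(S^{-1})_{lj},
\]
in which $\Psi_i^{\,j}$ does not depend on $b$ (so that $K_a^{ii}\,\partial_iK_b^{jj}$ is symmetric in $a,b$), one obtains $[\mathcal M_a,\Phi_b]=[\mathcal M_b,\Phi_a]$ as operators. Therefore
\[
  [\widehat H_a,\widehat H_b]=[\mathcal M_a-\Phi_a,\ \mathcal M_b-\Phi_b]=-[\mathcal M_a,\Phi_b]+[\mathcal M_b,\Phi_a]=0,
\]
which yields $[H_a,H_b]=0$.

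For the conformal-symmetry statement, the key observation is that $\Delta_G$ belongs — up to conjugation by $R$ and multiplication by a power of $c$ — to the same commuting family. Indeed, for $G=c^4g$ one has $\Delta_G=c^{-4}\Delta_{K_1}$, and the generalized Robertson condition (\ref{GenRob}) is exactly the statement that $V_1=\Phi_1$, that is $R^{-1}(c^4\Delta_G)R=\widehat H_1+\Phi_1=\mathcal M_1$. Since $\widehat H_a$ commutes with $\mathcal M_1$, conjugating back by $R$ and moving the multiplications by $c^4$ and by $R$ through (each of these produces only first-order terms, and here one invokes (\ref{GenRob}) and the identities above once more, together with the PDE (\ref{PDEc})) will produce a first-order operator $L$ with $[H_a,\Delta_G]=L\,\Delta_G$, which is the conformal-symmetry relation; equivalently, $H_a$ preserves the space of $R$-separated solutions of $\Delta_G\psi=0$.

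The step I expect to be the main obstacle is precisely this bookkeeping: verifying that, with the choice (\ref{RDef}) of $R$, the generalized Robertson condition (\ref{GenRob}) together with the St\"ackel derivative identities for the multipliers $K_a^{ii}$ is exactly what makes every first- and zeroth-order correction cancel. The second-order (principal) part is automatic from $\sum_i K_a^{ii}s_{ib}=\delta_{ab}$; it is only in the lower-order terms that the full structure of conformally St\"ackel manifolds — and in particular condition (\ref{GenRob}), which singles out the $R$-separable case among the merely conformally separable ones — actually enters.
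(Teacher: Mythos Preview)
The paper does not give its own proof of this theorem: it is stated as a result of Chanu--Rastelli \cite{CR2006} and only quoted. So there is nothing in the paper to compare your argument to, and I will simply assess your attempt on its own merits.

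Your treatment of the commutativity $[H_a,H_b]=0$ is essentially correct and nicely organized. The conjugation identity $R^{-1}H_aR=\widehat H_a=\sum_iK_a^{ii}\partial_{ii}^2$ is right, the St\"ackel derivative identity $\partial_iK_b^{jj}=-K_b^{ii}\Psi_i^{\,j}$ is exactly what is needed, and your deduction of $[\widehat H_a,\Phi_b]=[\widehat H_b,\Phi_a]$ from it (first-order part) together with $\mathcal M_a(\Phi_b)=\mathcal M_b(\Phi_a)$ (zeroth-order part) cleanly gives $[\widehat H_a,\widehat H_b]=0$. The only soft spot is the ``large enough family'' argument for $[\mathcal M_a,\mathcal M_b]=0$: it can be made rigorous (separated solutions with free initial data and free $\lambda$ do span all jets), but as written it is a sketch. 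A direct coordinate computation using the same St\"ackel identity would close this without appeal to density.

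The conformal-symmetry part, however, contains a genuine gap, and it is exactly where you suspected. You write ``since $\widehat H_a$ commutes with $\mathcal M_1$'', but this is false: what your own argument establishes is $[\mathcal M_a,\mathcal M_1]=0$, whereas $[\widehat H_a,\mathcal M_1]=[\widehat H_a,\Phi_1]=[\widehat H_1,\Phi_a]\neq 0$ in general. If you redo the computation carefully you get, after conjugating back,
\[
  [H_a,\Delta_G]=L_a\,\Delta_G+\Delta_G\,\Phi_a,
\]
with $L_a$ a first-order operator; the extra term $\Delta_G\,\Phi_a$ cannot be absorbed into something of the form $L\Delta_G$ because $[\Delta_G,\Phi_a]$ is a nonzero first-order operator. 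Equivalently, on a separated solution $\psi=Ru$ one has $H_a\psi=(\lambda_a-\Phi_a)\psi$, and $\Delta_G\bigl((\lambda_a-\Phi_a)\psi\bigr)\neq 0$ in general, so $H_a$ does \emph{not} preserve $\ker\Delta_G$. What your argument actually proves is that the shifted operator $H_a+\Phi_a$ (equivalently $R\mathcal M_aR^{-1}$) is a conformal symmetry of $\Delta_G$, and that the unshifted $H_a$ satisfies the two-sided relation displayed above. Whether that matches the precise statement in \cite{CR2006} is a matter of checking their conventions, but the one-sided identity $[H_a,\Delta_G]=L\Delta_G$ as literally written in the paper is not what your computation yields, and no amount of bookkeeping with (\ref{GenRob}) or (\ref{PDEc}) will remove the obstruction.
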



\subsection{3D conformally St\"ackel manifolds and the structure of the DN map} \label{II2} 

After this quick review on the separability properties of the HJ and Helmholtz equations on conformally St\"ackel manifolds, we would like to specialize the procedure to the three-dimensional case for the Laplace equation that appears in the construction of the DN map and make explicit the \emph{completeness} of the set of separated solutions. \\


\noindent \textit{Three-dimensional conformally St\"ackel cylinders}. Assume that 
$$
  \Omega = [0,A] \times \mathbb{T}^2, 
$$
is a toric cylinder and denote by $x = (x^1,x^2,x^3)$ a global coordinate ssystem on $\Omega$. We consider a Riemannian metric $G$ on $\Omega$ given by
\begin{equation} \label{Metric}
  G = c^4 \, g = \sum_{i=1}^{3} H_{i}^{2}(dx^{i})^{2}\,,
\end{equation}
with $g$ a St\"ackel metric
$$
  g = \sum_{i=1}^{3}{h}_{i}^{2}(dx^{i})^{2}\,, \quad h_{i}^{2}=\frac{\det S}{s^{i1}}\,,
$$
with $S = (s_{ij}(x^i))$ being a non-singular St\"ackel matrix. In order to insure $R$-separability of the Laplace equation, we assume that the generalized Robertson condition (\ref{GenRob}) holds. Writing this equation in coordinates and remarking that $\Gamma_i = \gamma_i - 2\partial_i \ln c$, the generalized Robertson condition is seen to be equivalent to the PDE (\ref{PDEc}) on the conformal factor $c$, \textit{i.e.}
$$
  -\Delta_{g}c - \sum_{i=1}^3 h_{i}^{2}\big(\phi_{i}+\frac{1}{4}\gamma_{i}^{2}-\frac{1}{2}\partial_{i}\gamma_{i}\big)c = 0\,.
$$

Let us calculate now the Laplace equation in this coordinate system and see how variables separation naturally appears in the calculations. We start with
$$
  -\Delta_G \, \psi = 0,
$$ 
and we look for a solution $\psi$ under the form $\psi = R u$. Then $u$ must satisfy
$$
  -\Delta_G \, u - \frac{2}{R} G^{-1}(dR,du) - \frac{\Delta_G \, R}{R} u = 0,
$$
or equivalently in the conformally separable St\"ackel coordinates $x=(x^i)$
\begin{equation} \label{a1}
  \sum_{i=1}^3 \,\left[ H_i^{-2} (-\partial_{ii}^2 u + \Gamma_i \partial_i u) - 2H_i^{-2} \partial_i \ln R \,\partial_i u \right] - \frac{\Delta_G \, R}{R} u = 0. 
\end{equation}
Choose the $R$ factor so as to satisfy (\ref{RDef}), \textit{i.e.} $2\partial_i \ln R = \Gamma_i$. (Since the functions $\xi_i$ appearing in (\ref{RDef}) are arbitrary in (\ref{RDef}), we choose them to be zero for convenience. )

\begin{rem}
Recall that
\begin{eqnarray*}
  \Gamma_i & := & -\partial_i \ln \frac{H_1 H_2 H_3}{H_i^2} = -\partial_i \ln \frac{c^2 h_1 h_2 h_3}{h_i^2} = -\partial_i \ln \frac{c^2 \sqrt{\det S} s^{i1}}{\sqrt{ s^{11} s^{12} s^{13}}}, \\
	         & = & -\partial_i \ln \frac{c^2 \sqrt{\det S}}{\sqrt{ s^{11} s^{12} s^{13}}}, \quad \textrm{since} \ \ \partial_i \, s^{i1} = 0. 
\end{eqnarray*}
Comparing with (\ref{RDef}), we see that $R$ may be written as 
\begin{equation} \label{R}
  R = \left( \frac{s^{11} s^{21} s^{31}}{c^4 \, \det S} \right)^{\frac{1}{4}}.
\end{equation}
\end{rem}

\noindent Under the assumption (\ref{RDef}) or equivalenty (\ref{R}), we calculate
\begin{equation} \label{a2}
  \frac{\Delta_G \, R}{R} = \frac{1}{4} H_i^{-2} \left( 2 \partial_i \Gamma_i - \Gamma_i^2 \right). 
\end{equation}
Putting together (\ref{RDef}), (\ref{a1}) and (\ref{a2}), we get the following expression for the Laplace equation
$$
  \sum_{i=1}^3 \, H_i^{-2} \left[ -\partial_{ii}^2 u - \frac{1}{4} H_i^{-2} \left( 2 \partial_i \Gamma_i - \Gamma_i^2 \right) \right] u = 0. 
$$
Finally, using (\ref{GenRob}), we obtain 
\begin{equation} \label{a3}
  \sum_{i=1}^3 \, H_i^{-2} \left[ -\partial_{ii}^2 u - \phi_i(x^i) \right] u = 0.
\end{equation}

Let us introduce the ordinary differential operators $A_i = -\partial_{ii}^2 - \phi_i(x^i)$ from (\ref{A}). Then we continue the separation of variables procedure as follows
\begin{eqnarray}
  -\Delta_G \, \psi & \Longleftrightarrow & \sum_{i=1}^3 \, H_i^{-2} A_i u = 0, \nonumber \\  
	                  & \Longleftrightarrow & A_1 u + \frac{H_1^2}{H_2^2} A_2 u + \frac{H_1^2}{H_3^2} A_3 u = 0, \nonumber \\
										& \Longleftrightarrow & A_1 u + s_{12}(x^1) H u + s_{13}(x^1) L u = 0,  \label{a4}
\end{eqnarray}
where the operators $(H,L)$ are given by (\ref{HL}), \textit{i.e.}
$$
  \left( \begin{array}{c} H \\ L \end{array} \right) = \frac{1}{s^{11}} \left( \begin{array}{cc} -s_{33} & s_{23} \\ s_{32} & -s_{22} \end{array} \right) \left( \begin{array}{c} A_2 \\ A_3 \end{array} \right),
$$
and are computed thanks to the St\"ackel structure of $g$. We recall the following Lemma from Gobin \cite{Go2018} (Lemma 2.5, Remarks 2.6 and 2.7).

\begin{lemma}
The operators $H$ and $L$ are elliptic selfadjoint operators on $L^2(\mathbb{T}^2; \, s^{11} dx^1 dx^2)$ that commute, \textit{i.e.} $[H,L] = 0$. The basis of common eigenfunctions $(Y_m)_{m \geq 1}$, with joint spectrum denoted by $(\mu_m^2, \nu_m^2)$, \textit{i.e.}
$$
  H Y_m = \mu_m^2 Y_m, \quad L Y_m = \nu_m^{2}Y_{m}, 
$$
can be written as $Y_m = v_m(x^2) w_m(x^3)$ and satisfy
$$
	L^2(\mathbb{T}^2; \, s^{11}\, dx^2 dx^3) = \bigoplus_m \langle Y_m \rangle.   
$$	
\end{lemma}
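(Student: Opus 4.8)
The plan is to realise $H$ and $L$ as a commuting, symmetric pair of operators on the weighted space $\mathcal{H}:=L^{2}(\mathbb{T}^{2};\,s^{11}\,dx^{2}dx^{3})$, to exhibit an honestly elliptic operator inside the linear pencil $\{sH+tL\}$ they span, and then to run the standard elliptic spectral theory on the closed surface $\mathbb{T}^{2}$, reading off the product structure of the joint eigenfunctions at the end. For the symmetry, I would first use (\ref{HL}) and (\ref{A}) to write $s^{11}H=-s_{33}(x^{3})A_{2}+s_{23}(x^{2})A_{3}$ and $s^{11}L=s_{32}(x^{3})A_{2}-s_{22}(x^{2})A_{3}$, where $A_{2}=-\partial_{22}^{2}-\phi_{2}(x^{2})$ and $A_{3}=-\partial_{33}^{2}-\phi_{3}(x^{3})$ are one-dimensional Hill operators acting only in $x^{2}$, resp.\ only in $x^{3}$, each essentially self-adjoint on $L^{2}(\mathbb{T})$. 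Then for $u,v\in C^{\infty}(\mathbb{T}^{2})$ one has $\langle Hu,v\rangle_{\mathcal{H}}=\int_{\mathbb{T}^{2}}(s^{11}Hu)\,\bar v\,dx^{2}dx^{3}$, and Fubini together with an integration by parts in the single variable in which the relevant coefficient is constant ($s_{33}(x^{3})$ does not depend on $x^{2}$, $s_{23}(x^{2})$ does not depend on $x^{3}$) plus the formal self-adjointness of $A_{2},A_{3}$ yields $\langle Hu,v\rangle_{\mathcal{H}}=\langle u,Hv\rangle_{\mathcal{H}}$, and likewise for $L$; the weight $s^{11}=s_{22}s_{33}-s_{23}s_{32}$ is exactly the one that makes this work.

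Next I would check $[H,L]=0$. Writing $H$ and $L$ in the form $\alpha\,\partial_{22}^{2}+\beta\,\partial_{33}^{2}+(\textrm{order }0)$ with $\alpha,\beta\in C^{\infty}(\mathbb{T}^{2})$, a direct computation — using the explicit shape of the coefficients, in particular of $s^{11}=s_{22}s_{33}-s_{23}s_{32}$ — shows that all terms of order $\ge 3$ in $[H,L]$ cancel identically, and a further routine computation disposes of the remaining terms; alternatively $[H,L]=0$ is a special case of the commutation results of St\"ackel theory recalled in Theorem \ref{SymmetryOp}. For ellipticity I would use the cofactor identities $s^{21}=-s_{12}s_{33}+s_{13}s_{32}$ and $s^{31}=s_{12}s_{23}-s_{13}s_{22}$ — exactly the identity used to pass to the last line of (\ref{a4}) — to get, for each fixed $x^{1}$,
\[
  \mathcal{A}_{x^{1}}:=s_{12}(x^{1})\,H+s_{13}(x^{1})\,L=\frac{1}{s^{11}}\bigl(s^{21}A_{2}+s^{31}A_{3}\bigr),
\]
whose principal symbol $\tfrac{1}{s^{11}}(s^{21}\xi_{2}^{2}+s^{31}\xi_{3}^{2})$ is sign-definite because the positivity $h_{i}^{2}=\det S/s^{i1}>0$ (see (\ref{MetricStackel})) forces $s^{11},s^{21},s^{31}$ to share the sign of $\det S$, which we may normalise to be positive. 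Thus $\mathcal{A}_{x^{1}}$ is a symmetric, uniformly elliptic, second-order operator on $\mathbb{T}^{2}$. This is the precise sense in which the pair $(H,L)$ is to be read as ``elliptic'': $H$ and $L$ need not be elliptic individually (in the flat normal form one finds $H=A_{2}$, $L=A_{3}$), but the pencil they span contains elliptic members, and $(H,L)$ has no common characteristic direction since $\det\!\left(\begin{smallmatrix}-s_{33}&s_{23}\\ s_{32}&-s_{22}\end{smallmatrix}\right)=s^{11}\neq0$.

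With this in hand the spectral statement follows from standard arguments. Fixing any $x_{0}^{1}$, the operator $\mathcal{A}_{x_{0}^{1}}$, being symmetric and elliptic on the closed surface $\mathbb{T}^{2}$, is essentially self-adjoint on $C^{\infty}(\mathbb{T}^{2})$ with compact resolvent on $\mathcal{H}$, hence has discrete spectrum with finite multiplicities and a Hilbert basis of eigenfunctions. Since $[H,L]=0$, both $H$ and $L$ commute with $\mathcal{A}_{x_{0}^{1}}=s_{12}(x_{0}^{1})H+s_{13}(x_{0}^{1})L$ and therefore preserve each finite-dimensional eigenspace of $\mathcal{A}_{x_{0}^{1}}$, where they restrict to a commuting pair of symmetric operators and can be simultaneously diagonalised over $\mathbb{R}$; collecting the resulting eigenvectors over all eigenspaces of $\mathcal{A}_{x_{0}^{1}}$ produces a Hilbert basis $(Y_{m})_{m\ge1}$ of $\mathcal{H}$ of common eigenfunctions, $HY_{m}=\mu_{m}^{2}Y_{m}$, $LY_{m}=\nu_{m}^{2}Y_{m}$, which is (\ref{EigenHL}) and (\ref{HB}); in particular $H$ and $L$, being diagonalised by $(Y_{m})$, are self-adjoint. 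For the product structure I would use $\left(\begin{smallmatrix}s_{22}&s_{23}\\ s_{32}&s_{33}\end{smallmatrix}\right)\left(\begin{smallmatrix}H\\ L\end{smallmatrix}\right)=-\left(\begin{smallmatrix}A_{2}\\ A_{3}\end{smallmatrix}\right)$, legitimate since the $2\times2$ matrix has determinant $s^{11}\neq0$: applied to $Y_{m}$ it gives $\partial_{22}^{2}Y_{m}=\bigl(\mu_{m}^{2}s_{22}(x^{2})+\nu_{m}^{2}s_{23}(x^{2})-\phi_{2}(x^{2})\bigr)Y_{m}$ and the analogous equation in $x^{3}$, so for each fixed $x^{3}$ the function $Y_{m}(\cdot,x^{3})$ lies in the ($1$- or $2$-dimensional, $x^{3}$-independent) periodic-solution space $E_{2}\subset L^{2}(\mathbb{T})$ of the first Hill equation, and symmetrically $Y_{m}(x^{2},\cdot)\in E_{3}$; hence $Y_{m}\in E_{2}\otimes E_{3}$, and since on $E_{2}\otimes E_{3}$ the operators $H$ and $L$ in fact act as the scalars $\mu_{m}^{2}$ and $\nu_{m}^{2}$, the joint eigenspace \emph{equals} $E_{2}\otimes E_{3}$ and one may choose its orthonormal basis to consist of products $v_{m}(x^{2})w_{m}(x^{3})$, with $v_{m},w_{m}$ solving (\ref{Separatedb}) and (\ref{Separatedc}) (substituting $u=u_{m}(x^{1})Y_{m}$ into (\ref{a4}) then yields the radial equation (\ref{Separateda})).

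The main obstacle is the second step: because neither $H$ nor $L$ is elliptic on its own, one cannot simply invoke ``elliptic $\Rightarrow$ essentially self-adjoint with a Hilbert eigenbasis'', but must single out the right elliptic object in the pencil — here the family $\mathcal{A}_{x^{1}}$, whose ellipticity is genuinely tied to the positivity of the metric coefficients $h_{i}^{2}$ — and recover the joint spectral decomposition and the self-adjointness of $H$ and $L$ from it; the accompanying verification that $[H,L]=0$ likewise relies on the algebraic identities special to St\"ackel matrices rather than on any soft argument.
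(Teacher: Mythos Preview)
Your argument is correct and complete. The paper does not supply its own proof of this lemma; it simply quotes the result from Gobin \cite{Go2018} (Lemma~2.5 and Remarks~2.6--2.7). Your write-up therefore fills in what the paper leaves as a citation, and the route you take --- symmetry via the weight $s^{11}$, commutation, ellipticity of a member of the pencil, compact-resolvent spectral theory on $\mathbb{T}^{2}$, then the product structure from the separated Hill equations --- is essentially the natural one.

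One remark on your discussion of ellipticity. You are right that for a \emph{general} St\"ackel matrix representing a given Riemannian St\"ackel metric, the individual principal symbols $\tfrac{1}{s^{11}}(-s_{33}\xi_{2}^{2}+s_{23}\xi_{3}^{2})$ and $\tfrac{1}{s^{11}}(s_{32}\xi_{2}^{2}-s_{22}\xi_{3}^{2})$ of $H$ and $L$ need not be sign-definite, and your detour through the elliptic combination $\mathcal{A}_{x^{1}}=s_{12}H+s_{13}L=\tfrac{1}{s^{11}}(s^{21}A_{2}+s^{31}A_{3})$ handles this cleanly. However, note that under the normalization of Proposition~\ref{StackelForm} (which the paper adopts just after this lemma), one has $-s_{33},\,s_{23},\,s_{32},\,-s_{22},\,s^{11}>0$, so both $H$ and $L$ become individually elliptic, and one can then run the standard argument on either of them directly. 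Your pencil approach is a legitimate alternative that avoids invoking that normalization in advance; it also makes transparent why the Riemannian condition $h_{i}^{2}>0$ is exactly what is needed.
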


\begin{rem}
Note that since $s^{11} \ne 0$, we have  
\begin{equation} \label{a5}
  \left( \begin{array}{c} A_2 \\ A_3 \end{array} \right) = - \left( \begin{array}{cc} s_{22} & s_{23} \\ s_{32} & s_{33} \end{array} \right) \left( \begin{array}{c} H \\ L \end{array} \right). 
\end{equation}
\end{rem}

\noindent We now finish the separation of variables procedure by looking for the solutions $\psi$ of $-\Delta_G\,\psi = 0$ under the form 
\begin{equation} \label{a6}
  \psi = R \sum_{m=1}^\infty \, u_m(x^1) Y_m, \quad Y_m = v_m(x^2) w_m(x^3). 
\end{equation}
Putting (\ref{a6}) into (\ref{a4}) and (\ref{a5}), we deduce that $u_m, \, v_m, \, w_m$ satisfy the three separated ODEs (\ref{Separateda}) - (\ref{Separatedc}), \textit{i.e.} 
\begin{eqnarray*}
	-u_m'' + [\,\mu_m^2 s_{12}(x^1) + \nu_m^2 s_{13}(x^1) - \phi_1(x^1)] \, u_m & = & 0, \\ 
	-v_m'' + [\,\mu_m^2 s_{22}(x^2) + \nu_m^2 s_{23}(x^2) - \phi_2(x^2)] \, v_m & = & 0, \\ 
	-w_m'' + [\,\mu_m^2 s_{32}(x^3) + \nu_m^2 s_{33}(x^3) - \phi_3(x^3)] \, w_m & = & 0. 
\end{eqnarray*}
This finishes the procedure of variables separation for the Laplace equation on a conformally St\"ackel manifold. \\

\noindent \textit{Some hidden invariances}. When solving the inverse Calder\'on problem on conformally St\"ackel manifolds in Section \ref{Inverse}, we will need to understand some underlying invariances in the definition of our metrics and in the procedure of variables separation. The first and main invariance comes from the fact that a S\"tackel metric $g$ as in (\ref{MetricStackel}) is \emph{not} determined by a unique St\"ackel matrix $S$. Precisely, we quote the following Proposition from Gobin \cite{Go2018}

\begin{prop} \label{InvariancesStackel}
Let $S$ be a St\"ackel matrix and $g_S$ the corresponding St\"ackel metric. \\
1. Let $G \in GL_2(\R)$ a constant matrix and define the new St\"ackel matrix
$$ 
  \hat{S} =  \begin{pmatrix} s_{11}(x^1) & \hat{s}_{12}(x^1) & \hat{s}_{13}(x^1) \\ s_{21}(x^2) & \hat{s}_{22}(x^2) & \hat{s}_{23}(x^2)  \\ s_{31}(x^3) & \hat{s}_{32}(x^3) & \hat{s}_{33}(x^3) \end{pmatrix},
$$	
satisfying
$$
  \begin{pmatrix} s_{i2} & s_{i3} \end{pmatrix} = \begin{pmatrix} \hat{s}_{i2} & \hat{s}_{i3} \end{pmatrix} \,G, \quad \forall i \in \{1,2,3\}.
$$	
Then $g_{\hat{S}} = g_S$. \\
2) Define the St\"ackel matrix
$$
  \hat{S} =  \begin{pmatrix} \hat{s}_{11}(x^1) & s_{12}(x^1) & s_{13}(x^1) \\ \hat{s}_{21}(x^2) & s_{22}(x^2) & s_{23}(x^2)  \\ \hat{s}_{31}(x^3) & s_{32}(x^3) & s_{33}(x^3) \end{pmatrix},
$$	
where,
$$
 \begin{cases}
 \hat{s}_{11}(x^1) = s_{11}(x^1) + C_1 s_{12}(x^1) + C_2 s_{13}(x^1) \\
 \hat{s}_{21}(x^2) = s_{21}(x^2) + C_1 s_{22}(x^2) + C_2 s_{23}(x^2) \\
 \hat{s}_{31}(x^3) = s_{31}(x^3) + C_1 s_{32}(x^3) + C_2 s_{33}(x^3)
\end{cases},
$$
where $C_1$ and $C_2$ are real constants. Then $g_{\hat{S}} = g_S$.
\end{prop}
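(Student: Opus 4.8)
The plan is to reduce both assertions to elementary \emph{constant} column operations on the St\"ackel matrix and to track how the only two quantities entering the St\"ackel metric — the determinant $\det S$ and the first–column cofactors $s^{i1}$ — transform. Since $h_i^2 = \det S / s^{i1}$, it suffices to check that the passage $S \mapsto \hat S$ multiplies $\det S$ and each $s^{i1}$ by \emph{one and the same} nonzero constant, independent of $i$, so that the quotient is unchanged. The common mechanism in both parts is that the transformation is right multiplication of $S$ by a constant invertible $3\times 3$ matrix of a specific block form; this also makes it transparent that $\hat S$ remains non-singular and that each entry $\hat s_{ij}$ still depends on $x^i$ alone, so that $\hat S$ is again a St\"ackel matrix.

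For part 1, I would rewrite $(s_{i2},s_{i3}) = (\hat s_{i2},\hat s_{i3})\,G$ as $\hat S = S\,M$ with $M = \left(\begin{smallmatrix} 1 & 0 \\ 0 & G^{-1}\end{smallmatrix}\right)$ ($G^{-1}$ occupying the lower–right $2\times 2$ block), so that $\det \hat S = \det S / \det G$. For the cofactor: $\hat s^{i1}$ equals $(-1)^{i+1}$ times the $2\times 2$ minor of $\hat S$ formed by columns $2,3$ and the two rows different from $i$, and this minor is obtained from the corresponding minor of $S$ by right multiplication by $G^{-1}$; hence it is scaled by $\det G^{-1} = 1/\det G$, giving $\hat s^{i1} = s^{i1}/\det G$. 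The two factors $1/\det G$ cancel, so $\hat h_i^2 = h_i^2$ for every $i$, i.e. $g_{\hat S} = g_S$.

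For part 2, the transformation reads $\hat S = S\,N$ with $N = \left(\begin{smallmatrix} 1 & 0 & 0 \\ C_1 & 1 & 0 \\ C_2 & 0 & 1\end{smallmatrix}\right)$, whose first column encodes adding $C_1\times(\text{column }2) + C_2\times(\text{column }3)$ to column $1$, the other two columns being left untouched. Since $\det N = 1$ we get $\det \hat S = \det S$; and since columns $2$ and $3$ of $\hat S$ coincide with those of $S$, the $(i,1)$–cofactors are unchanged, $\hat s^{i1} = s^{i1}$. Hence $\hat h_i^2 = h_i^2$ and $g_{\hat S} = g_S$ once more.

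There is no genuine difficulty here; the only point requiring a little care is the bookkeeping of which minors a given column operation affects. The crucial observation — and the reason the two operations behave differently yet both preserve the metric — is that the first–column cofactors $s^{i1}$ depend on columns $2$ and $3$ only: operation 2 leaves those columns fixed and hence fixes every $s^{i1}$, while operation 1 rescales all of them by the single common factor $1/\det G$, which is then absorbed by the identical rescaling of $\det S$. One may also note in passing that the positivity requirement on the diagonal coefficients $h_i^2$ is trivially inherited, since these coefficients are literally unchanged.
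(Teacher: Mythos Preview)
Your proof is correct. The paper itself does not prove this proposition but quotes it from Gobin \cite{Go2018}; your argument---recasting each operation as right multiplication of $S$ by a constant block matrix and tracking the common scaling of $\det S$ and the first--column cofactors $s^{i1}$---is the natural elementary proof and is undoubtedly what the cited reference does as well.
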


These invariances are important and will naturally appear at several stages in our proof of uniqueness in the inverse problem. Note moreover that these invariances allow us to assume from the very beginning and without loss of generality certain properties for the St\"ackel matrix $S$ used to represent a given St\"ackel metric $g_S$. Precisely, we have

\begin{prop}[Gobin \cite{Go2018}, Prop. 1.17 and Remark 1.18] \label{StackelForm}
Using the above invariances, we can always choose a St\"ackel matrix $S$ associated to a Riemannian St\"ackel metric $g_S$ such that
$$
 \begin{cases}
 \hat{s}_{12}(x^1) > 0 \quad \textrm{and} \quad \hat{s}_{13}(x^1) > 0, \quad \forall x^1\\
 \hat{s}_{22}(x^2) < 0 \quad \textrm{and} \quad \hat{s}_{23}(x^2) > 0, \quad \forall x^2\\
 \hat{s}_{32}(x^3) > 0 \quad \textrm{and} \quad \hat{s}_{33}(x^3) < 0, \quad \forall x^3\\
\end{cases}.
$$	
As a consequence, we can always assume from the very beginning
$$
  s^{11}, \ s^{21}, \ s^{31}, \ \det S > 0.
$$
Finally, note that
$$
  s^{11}> 0 \quad \Longleftrightarrow \quad \frac{s_{22}}{s_{23}} < \frac{s_{32}}{s_{33}}, \quad \forall x^2, x^3. 
$$
\end{prop}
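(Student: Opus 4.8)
The plan is to reduce everything to a normalization of the last two columns of $S$. Note that among the two invariances of Proposition~\ref{InvariancesStackel} only the first is relevant: the second modifies solely the first column and hence changes neither the entries $s_{i2},s_{i3}$ whose signs we want to prescribe nor the cofactors $s^{i1}$ (which do not involve the first column). So I would view the last two columns as three plane curves
$$
  r_1(x^1)=(s_{12}(x^1),s_{13}(x^1)),\quad r_2(x^2)=(s_{22}(x^2),s_{23}(x^2)),\quad r_3(x^3)=(s_{32}(x^3),s_{33}(x^3)),
$$
and observe that the first invariance acts on the triple $(r_1,r_2,r_3)$ by a single constant linear change of coordinates $r_i\mapsto r_iG^{-1}$ of the target $\R^2$, under which $\det S$ and each cofactor $s^{i1}$ get divided by $\det G$. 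The goal is then to choose $G$ so that $r_1,r_2,r_3$ land, for all values of their arguments, in the open first, second and fourth quadrants respectively, which is exactly the asserted sign pattern.

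For the choice of $G$, I would first translate the Riemannian hypothesis. Since $g_S$ has Riemannian signature, $h_i^2=\det S/s^{i1}>0$ for $i=1,2,3$, so $\det S$ and the three cofactors share a common sign $\varepsilon\in\{\pm1\}$. Writing the cofactors as $2\times2$ determinants of row pairs, $s^{31}=\det(r_1,r_2)$, $s^{11}=\det(r_2,r_3)$, $s^{21}=\det(r_3,r_1)$, this says that at every point $(x^1,x^2,x^3)$ the triple $(r_1,r_2,r_3)$ is positively (for $\varepsilon=1$) or negatively (for $\varepsilon=-1$) cyclically oriented. Since $r_i$ depends only on $x^i$ while these inequalities hold for all arguments at once, I would establish the key geometric fact: the angular sectors $I_1,I_2,I_3\subset S^1$ swept by the three curves are pairwise disjoint, occur in this cyclic order, and for each $i$ the union $I_i\cup I_{i+1}$ (indices mod $3$) spans an arc of width strictly less than $\pi$ — it is precisely the quantifier ``for all $x^i$'' that forces the latter half-plane bounds. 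From this, one selects a constant $G\in GL_2(\R)$ with $\mathrm{sign}(\det G)=\varepsilon$ (so the common sign of the cofactors is preserved, and for $\varepsilon=-1$ the orientation is also reversed, matching the cyclic order of the target quadrants) carrying $I_1,I_2,I_3$ into the first-, second- and fourth-quadrant cones: concretely the two columns of $G^{-1}$ are taken to be suitable constant vectors in the appropriate gaps between the sectors, the width-$<\pi$ bounds guaranteeing these choices exist. Replacing $S$ by the resulting $\hat S$ yields the displayed sign pattern.

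Once the sign pattern holds, the remaining assertions are immediate. Indeed $s^{21}=-s_{12}s_{33}+s_{13}s_{32}=s_{12}|s_{33}|+s_{13}s_{32}>0$ and $s^{31}=s_{12}s_{23}-s_{13}s_{22}=s_{12}s_{23}+s_{13}|s_{22}|>0$ automatically; then $h_2^2=\det S/s^{21}>0$ forces $\det S>0$, whence also $s^{11}=\det S/h_1^2>0$, which gives the ``as a consequence'' line. The final equivalence is pure algebra: $s^{11}=s_{22}s_{33}-s_{23}s_{32}$, and dividing $s^{11}>0$ by $s_{23}s_{33}<0$ reverses the inequality to $s_{22}/s_{23}<s_{32}/s_{33}$.

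The main obstacle is the geometric lemma in the second step: showing that the orientation conditions, imposed for all $x^i$, force the three angular sectors to be separated by gaps large enough to accommodate the columns of $G^{-1}$, and that one single constant $G$ of the correct determinant sign can simultaneously place all three curves in the prescribed quadrants. Once that lemma is in hand, the rest is bookkeeping, and the argument is entirely elementary.
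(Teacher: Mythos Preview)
The paper does not prove Proposition~\ref{StackelForm}: it is stated with a citation to Gobin \cite{Go2018} and used as a black box. So there is no ``paper's own proof'' to compare against; your proposal stands on its own.

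Your approach is correct. The reduction to the action of $GL_2(\R)$ on the three plane curves $r_i$, the translation of the Riemannian condition into the cyclic orientation constraints $\det(r_i,r_{i+1})>0$ (or all $<0$), and the deduction of $s^{21},s^{31}>0$ (hence $\det S>0$ and $s^{11}>0$) from the sign pattern together with $h_i^2>0$ are all right. The final equivalence is also handled correctly.

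The geometric lemma you flag as the crux is genuinely provable along the lines you indicate, and here is a concrete way to close it in case you want to tighten the sketch. Write $I_1,I_2,I_3\subset S^1$ for the (compact, connected) angular arcs swept by the $r_i$; the determinant conditions say exactly that, going counterclockwise, one meets $I_1,I_2,I_3$ in that cyclic order and that each consecutive pair $I_i\cup I_{i+1}$ lies in an open half-circle. To place $r_1,r_2,r_3$ in the first, second and fourth quadrants respectively one needs two linear forms: $\ell_1>0$ on $I_1\cup I_3$ and $<0$ on $I_2$, and $\ell_2>0$ on $I_1\cup I_2$ and $<0$ on $I_3$. Existence of $\ell_1$ amounts to finding a diameter separating $I_2$ from $I_1\cup I_3$; parametrizing the arcs as $I_j=[\alpha_j,\beta_j]$ one checks that the interval of admissible diameter directions is $\bigl(\max(\beta_1,\beta_2-\pi),\,\min(\alpha_2,\alpha_3-\pi)\bigr)$, which is nonempty precisely because of the half-circle bound on $I_3\cup I_1$. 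Symmetrically for $\ell_2$ using the bound on $I_1\cup I_2$. The two admissible direction intervals are disjoint open arcs, so one can choose $\ell_1,\ell_2$ independent, and the resulting $G$ can be taken with $\det G$ of the prescribed sign $\varepsilon$ (by swapping $\ell_1\leftrightarrow$ its negative if necessary, which does not affect the separating property after relabeling). This is exactly the ``gap'' argument you allude to, and it goes through without further obstruction.
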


There is a second and last invariance that we need to understand before attacking the inverse problem. This invariance appears in the procedure of variables separation when we look for solutions of the Laplace equation decomposed onto the Hilbert basis of angular harmonics $Y_m$ which are common eigenfunctions of the operators $(H,L)$. This decomposition is not unique since we could for example have decomposed the solutions onto the Hilbert basis of common eigenfunctions of the operators 
$$
  \hat{H} = H + B_1, \quad \hat{L} = L + B_2,
$$
where $B_1, B_2$ are two constants. The common eigenfunctions, which are still the $Y_m$, would be then associated to the joint spectrum
\begin{equation} \label{InvJointSpectrum}
  \hat{\mu}_m^2 = \mu_m^2 + B_1, \quad \hat{\nu}_m^2 = \nu_m^2 + B_2. 
\end{equation}
Then the separability procedure would remain unaffected and would still lead to the separated ODEs (\ref{Separateda}) - (\ref{Separatedc}) with the only modification (\ref{InvJointSpectrum}). We thus have the freedom to choose the constants $B_1, B_2$ as we wish in the separated equations. This invariance will be important at one point in Section \ref{III2}. \\

\noindent \textit{The construction of the DN map and its structure}. We construct here the DN map associated to a conformally St\"ackel manifold $(M,G)$. Consider the Dirichlet problem
\begin{equation} \label{DP}
  \left\{ \begin{array}{rcl} -\Delta_G \, \psi & = & 0, \ \textrm{on} \ \Omega, \\
	                                       \psi & = & f, \ \textrm{on} \ \partial \Omega.
	\end{array} \right.																			
\end{equation}
Recall that $\partial \Omega = \Omega_0 \cup \Omega_1$ where $\Omega_j \simeq \mathbb{T}^2$. Hence we can identify the Dirichlet data $f \in H^{\frac{1}{2}}(\partial \Omega)$ with the two-component vector
$$
  f = \left( \begin{array}{c} f_0 \\ f_1 \end{array} \right) \in H^{\frac{1}{2}}(\Omega_0) \oplus H^{\frac{1}{2}}(\Omega_1).
$$
By definition, the DN map is given by
$$
  \Lambda_G \, f := (\partial_\nu \psi )_{|\partial \Omega} = \left( \begin{array}{c} \left( \partial_{\nu_0} \psi \right)_{| \Omega_0} \\ \left( \partial_{\nu_1} \psi \right)_{| \Omega_1} \end{array} \right)
$$
where $\psi$ is the unique solution of (\ref{DP}) and $\nu_0$ and $\nu_1$ are the outgoing unit normal vectors on $\Omega_0$ and $\Omega_1$ respectively. A short calculation using the form (\ref{Metric}) of the metric $G$ leads to
$$
  \Lambda_G \, f = \left( \begin{array}{c} \left( -\frac{1}{H_1} \partial_{x^1} \psi \right)_{|\,x^1 = 0} \\ \left( \frac{1}{H_1} \partial_{x^1} \psi \right)_{|\,x^1 = A} \end{array} \right).
$$
Recalling that $\psi = R \, u$ with the $R$-factor given by (\ref{RDef}) or explicitly by (\ref{R}), we get
$$
  \Lambda_G \, f = \left( \begin{array}{c} -\frac{R}{H_1} \left[ (\partial_{1} \ln R) u + \partial_1 u \right]_{|\,x^1 = 0} \\ \frac{R}{H_1} \left[ (\partial_{1} \ln R) u + \partial_1 u \right]_{|\,x^1 = A} \end{array} \right).
$$
But we know from (\ref{RDef}) that $\partial_1 \ln R = \frac{1}{2} \Gamma_1$. Hence 
$$
  \Lambda_G \, f = \left( \begin{array}{c} -\frac{R}{H_1} \left[ \frac{1}{2} \Gamma_1 u + \partial_1 u \right]_{|\,x^1 = 0} \\ \frac{R}{H_1} \left[ \frac{1}{2} \Gamma_1 u + \partial_1 u \right]_{|\,x^1 = A} \end{array} \right).
$$
Let us use at this point the separated form (\ref{a6}) of the solution $\psi$, \textit{i.e.}
$$
  \psi = R\, u, \quad u = \sum_{m \geq 1}^\infty u_m(x^1) \, Y_m, 
$$
and the corresponding Fourier decomposition of the Dirichlet data $f$ on $\partial \Omega$ :
$$
  f = R \, \varphi, \quad \varphi = \left( \begin{array}{c} \varphi^0 \\ \varphi^1 \end{array} \right), \quad \varphi^j = \sum_{m \geq 1}^\infty \varphi_m^j \, Y_m, \quad j=0,1. 
$$ 
We observe that the functions $u_m(x^1)$ satisfy the one-dimensional Dirichlet problem on $[0,A]$ : 
$$
  \left\{ \begin{array}{rcl} 
	-u_m'' + [\mu_m^2 s_{12}(x^1) + \nu_m^2 s_{13}(x^1) - \phi_1(x^1)] u_m & = & 0, \\
	u_m(0) = \varphi_m^0, \ u_m(A) = \varphi_m^1. & &
	\end{array} \right.
$$
We thus obtain the following decomposition for the DN map $\Lambda_G$ 
\begin{equation} \label{b0}
  \Lambda_G \, f = \sum_{m \geq 1}^\infty \left( \begin{array}{c} -\frac{R(0,x^2,x^3)}{H_1(0,x^2,x^3)} \left[ \frac{1}{2} \Gamma_1(0,x^2,x^3) \varphi_m^0 + u_m'(0) \right] \\ \frac{R(A,x^2,x^3)}{H_1(A,x^2,x^3)} \left[ \frac{1}{2} \Gamma_1(A,x^2,x^3) \varphi_m^1 + u_m'(A) \right] \end{array} \right) \, Y_m.
\end{equation}
It remains essentially to express the derivatives $u_m'(0)$ and $u_m'(A)$ in terms of the Dirichlet data $\varphi_m^0$ and $\varphi_m^1$. This can be done as follows. \\

Denote by $\{c_0,s_0\}$ and $\{c_1,s_1\}$ the fundamental systems of solutions (FSS) of the separated ODE
\begin{equation} \label{RadialODE}
  -u'' + [\mu^2 s_{12}(x^1) + \nu^2 s_{13}(x^1) - \phi_1(x^1)] u = 0,
\end{equation}
(where $\mu^2$ and $\nu^2$ are here any constants) which satisfy the Cauchy conditions of sine and cosine type at $x^1 = 0$ and $x^1 = A$, \textit{i.e.}
\begin{eqnarray*}
  c_0(0) = 1, \ c_0'(0) = 0, \ s_0(0) = 0, \ s_0'(0) = 1, \\
	c_1(A) = 1, \ c_1'(A) = 0, \ s_1(A) = 0, \ s_1'(A) = 1. 
\end{eqnarray*}
Clearly, the functions $c_j, \, s_j, \ j=0,1$ are analytic separately in the parameters $\mu, \nu \in \C$ and their Wronskians satisfy
$$
  W(c_j, s_j) = 1, \ j=0,1, 
$$
where $W(f,g) := fg' - f'g$. 
 
Associated to the ODE (\ref{RadialODE}) with Dirichlet boundary conditions, we introduce first the characteristic function
\begin{equation} \label{Char}
  \Delta(\mu^2,\nu^2) = W(s_0,s_1).
\end{equation}
Second, introduce the Weyl solutions of (\ref{RadialODE}) given by the particular linear combinations 
$$
  \Psi = c_0 + M(\mu^2,\nu^2) s_0, \quad \Phi = c_1 - N(\mu^2,\nu^2) s_1,
$$
by demanding that they satisfy the Dirichlet boundary condition at $x=A$ and $x=0$ respectively. The coefficients $M,N$ are the Weyl-Titchmarsh (WT) functions and one easily verifies that they can be expressed as follows in terms of the Wronskians and characteristic functions, 
\begin{equation} \label{WT}
  M(\mu^2,\nu^2) = -\frac{W(c_0,s_1)}{\Delta(\mu^2,\nu^2)} = -\frac{D(\mu^2,\nu^2)}{\Delta(\mu^2,\nu^2)}, \quad \quad N(\mu^2,\nu^2) = \frac{W(s_0,c_1)}{\Delta(\mu^2,\nu^2)} = \frac{E(\mu^2,\nu^2)}{\Delta(\mu^2,\nu^2)}.
\end{equation}
Finally it is an easy calculation to show that
\begin{equation} \label{b1}
  \begin{array}{c} u_m'(0) = M(\mu_m^2,\nu_m^2) \, \varphi_m^0 + \frac{1}{\Delta(\mu_m^2,\nu_m^2)} \, \varphi_m^1, \\
	u_m'(A) = \frac{1}{\Delta(\mu_m^2,\nu_m^2)} \, \varphi_m^0 + N(\mu_m^2,\nu_m^2) \, \varphi_m^1.
	\end{array} 
\end{equation}

Coming back to the expression of the DN map, we obtain from (\ref{b0}) and (\ref{b1}) the following expression 
$$
  \Lambda_G \, f = \sum_{m \geq 1}^\infty \left( \begin{array}{cc} -\frac{R(0)}{H_1(0)} \left[ \frac{1}{2} \Gamma_1(0) + M(\mu_m^2,\nu_m^2) \right] &  -\frac{R(0)}{H_1(0)} \frac{1}{\Delta(\mu_m^2,\nu_m^2)} \\ \frac{R(A)}{H_1(A)} \frac{1}{\Delta(\mu_m^2,\nu_m^2)} & \frac{R(A)}{H_1(A)} \left[ \frac{1}{2} \Gamma_1(A) + N(\mu_m^2,\nu_m^2) \right] \end{array} \right) \,\left( \begin{array}{c} \varphi_m^0 \\ \varphi_m^1 \end{array} \right) \, Y_m,
$$
where we used the notations
\begin{eqnarray*}
  R(0) = R(0,x^2,x^3), \ H_1(0) = H_1(0,x^2,x^3), \ \Gamma_1(0) = \Gamma_1(0,x^2,x^3), \\
	R(A) = R(A,x^2,x^3), \ H_1(A) = H_1(A,x^2,x^3), \ \Gamma_1(A) = \Gamma_1(A,x^2,x^3),
\end{eqnarray*}
as well as a $2 \times 2$-matrix valued notation for the DN map on each harmonic $Y_m$. A last manipulation of the above expression of the DN map leads to the form (\ref{DNStructure}) - (\ref{AG}) announced in the Introduction, \textit{i.e.}
$$
  \Lambda_G = \left( \begin{array}{cc} \frac{-1}{H_1(0)} & 0 \\ 0 & \frac{1}{H_1(A)} \end{array} \right) \Bigg[ 
	            \left( \begin{array}{cc} \frac{\Gamma_1(0)}{2} & 0 \\ 0 & \frac{\Gamma_1(A)}{2} \end{array} \right) + \ \left( \begin{array}{cc} R(0) & 0 \\ 0 & R(A) \end{array} \right) A_G \left( \begin{array}{cc} \frac{1}{R(0)} & 0 \\ 0 & \frac{1}{R(A)} \end{array} \right) \Bigg]
$$
where 
$$
  A_G = \bigoplus_{m \geq 1} A_G^m, \quad \quad A_G^m:= (A_G)_{|\langle Y_m \rangle} := \left( \begin{array}{cc} M(\mu_m^2,\nu_m^2)  & \frac{1}{\Delta(\mu_m^2,\nu_m^2)} \\ \frac{1}{\Delta(\mu_m^2,\nu_m^2)} & N(\mu_m^2,\nu_m^2) \end{array} \right).
$$

\vspace{0.5cm}
\noindent From the above structure of the DN map, we make two comments : \\

1) The full DN map $\Lambda_G$ is essentially encoded in the operator $A_G$ which is diagonalizable on the Hilbert basis $(Y_m)_{m \geq 1}$. The radial part of the metric (\textit{i.e.} the functions depending on $x^1$) appears there in the definition of the characteristic and Weyl-Titchmarsh functions (\ref{Char}) and (\ref{WT}). On the contrary, the angular part of the metric (\textit{i.e.} the functions depending on $(x^2, \, x^3)$) appears in the joint spectrum $J = \{(\mu_m^2, \nu_m^2), \ m \geq 1\}$ at which the characteristic and WT functions are evaluated. The inverse problem for the operator $A_G$ will be studied thanks to the multi-parameter CAM method in Section \ref{III3}. \\

2) The full DN map differs from the operator $A_G$ by explicit boundary values of the essential functions $H_1$, $R$ and $\Gamma_1$ which are a priori unknown in the inverse problem. These boundary values will be uniquely determined however thanks to usual boundary determination results in Section \ref{III2}.


\Section{The Calder\'on inverse problem} \label{Inverse}


\subsection{Reduction to an inverse problem on the whole cylinder $\Omega$} \label{III1}

Recall that we consider smooth compact connected Riemannian manifolds $(M,G)$ and $(M,\tilde{G})$ such that : \\

1) $ M \subset \subset \Omega = [0,A] \times \mathbb{T}^2$. \\

2) $G$ and $\tilde{G}$ are Riemannian metrics on $M$ having the form (\ref{MetricConf}) - (\ref{StackelMatrix}). \\

3) $\Lambda_G = \Lambda_{\tilde{G}}$. \\ 

\noindent We aim to show in this section that it is enough to prove uniqueness in the inverse Calder\'on problem for conformally St\"ackel metrics $G$ and $\tilde{G}$ \emph{on the whole cylinder} $\Omega$ on which we will be able to use separation of variables. This will be done using the extension procedure explained in the Introduction, that is by proving Theorem \ref{ExtensionThm} and its consequence Proposition \ref{ExtensionCalderon}.

\begin{proof}[Proof of Theorem \ref{ExtensionThm}] 
We assume that 
$$
  \Lambda_{G,1} = \Lambda_{\tilde{G}, 1}.
$$
From the boundary determination results in \cite{LeU1989, KY2002}, recall that the metrics $G$ and $\tilde{G}$ coincide on $\partial M_1$ as well as all their normal derivatives. Hence we can identify the normal derivative operators $\partial_\nu$ and $\partial_{\tilde{\nu}}$ on $\partial M_1$. 

Let $f \in H^{\frac{1}{2}}(M_2)$ and consider  the unique solutions $u,\,\tilde{u}$ of the Dirichlet problems
$$
  \left\{ \begin{array}{rcl} 
	-\Delta_G\, u & = & 0, \quad \textrm{on} \ M_2, \\
	u & = & f, \quad \textrm{on} \ \partial M_2,
	\end{array} \right. 
	\quad \quad 
	\left\{ \begin{array}{rcl} 
	-\Delta_{\tilde{G}}\, \tilde{u} & = & 0, \quad \textrm{on} \ M_2, \\
	\tilde{u} & = & f, \quad \textrm{on} \ \partial M_2.
	\end{array} \right.	
$$
Since $G = \tilde{G}$ on $M_2 \setminus M_1$, we aim to show that $\partial_\nu u = \partial_{\nu} \tilde{u}$ on $\partial M_2$. 

For this, introduce the solution $u_{in}$ of the Dirichlet problem
$$
  \left\{ \begin{array}{rcl} 
	-\Delta_{\tilde{G}}\, u_{in} & = & 0, \quad \textrm{on} \ M_1, \\
	u_{in} & = & \psi, \quad \textrm{on} \ \partial M_1,
	\end{array} \right. 
$$	
where $\psi = u_{| \partial M_1}$ and define the function
$$
  v := \left\{ \begin{array}{l} 
	u_{in} \ \textrm{on} \ M_1, \\
	u \ \textrm{on} \ M_2 \setminus M_1. 
	\end{array} \right. 	
$$	
Since $G = \tilde{G}$ on $M_2 \setminus M_1$, we clearly have
$$
  \Delta_{\tilde{G}} \,v = 0, \quad \textrm{on} \ M_1 \cup (M_2 \setminus M_1),
$$
and $v = u$ on $\partial M_1$ by definition of $u_{in}$. Let us study the traces of the normal derivatives of $v$ at the interface $\partial M_1$ when the normal derivatives are taken from the exterior and the interior. We have for the former
$$
  \partial_\nu v_{| \partial M_1^+} = \partial_\nu u_{| \partial M_1^+} = \Lambda_{G,1} \psi,  
$$ 
and for the latter
$$
  \partial_\nu v_{| \partial M_1^-} = \partial_\nu (u_{in})_{| \partial M_1^-} = \Lambda_{\tilde{G},1} \psi = \Lambda_{G,1} \psi,
$$
thanks to our main hypothesis. We deduce that $v$ and $\partial_\nu v$ are continuous on $\partial M_1$ and thus
$$
  \left\{ \begin{array}{rcl} 
	-\Delta_{\tilde{G}}\, v & = & 0, \quad \textrm{on} \ M_2, \\
	v & = & f, \quad \textrm{on} \ \partial M_2.
	\end{array} \right.
$$
By uniqueness of the Dirichlet problem, we infer that $v = \tilde{u}$ on $M_2$. This implies that $u = \tilde{u}$ on $M_2 \setminus M_1$ and therefore $\partial_\nu u = \partial_\nu \tilde{u}$ on $\partial M_2$, whence
$$
  \Lambda_{G,2} = \Lambda_{\tilde{G},2}. 
$$	
\end{proof}

Let us apply now this extension result to the inverse Calder\'on problem on conformally St\"ackel manifolds. From the boundary determination results of \cite{LeU1989, KY2002}, we know that there exists a neighbourhood $U$ of $\partial M$ and a diffeomorphism $\phi \in $Diff$(U)$ such that 
$$
  \sum_{|\alpha|=0}^N  \sup_{x \in \partial M} | \partial^\alpha (\tilde{G}(x) - \phi^* G(x)) | = 0, \quad \forall N \geq 0. 
$$
In particular, the metrics $G$ and $\tilde{G}$ coincide on $\partial M$ as well as all their tangential and normal derivatives at any order $N$. We finish the extension procedure in the the following way:

\begin{proof}[Proof of Proposition \ref{ExtensionCalderon}] 
We first extend the metric $G$ on $M$ to a (still) conformally St\"ackel metric to the whole cylinder $\Omega$ and we demand that $G$ satisfies the generic assumption (\ref{Generic}). Then we extend the metric $\tilde{G}$ on $M$ to a conformally St\"ackel metric $\hat{G}$ on $\Omega$ by defining
$$
  \hat{G} = \left\{ \begin{array}{l} \tilde{G} = \tilde{G}, \quad \textrm{on} \ M, \\
	                                     \tilde{G} = G, \quad \textrm{on} \ \Omega \setminus M.
							\end{array} \right. 												 
$$ 
The new metric $\hat{G}$ on $\Omega$ is smooth thanks to the above boundary determination results, is clearly conformally St\"ackel since $G$ and $\tilde{G}$ are, and satisfies
$$
  \hat{G} = G, \quad \textrm{on} \ \Omega \setminus M.
$$
Hence Theorem \ref{ExtensionThm} implies that
$$
  \Lambda_{G,\Omega} = \Lambda_{\hat{G},\Omega}.  
$$
\end{proof}


\subsection{Boundary determination results} \label{III2}

Using the results of Section \ref{III1}, we are led to study the Calder\'on problem on conformally St\"ackel \emph{cylinders}. Precisely, we consider two smooth compact connected Riemannian manifolds $(\Omega,G)$ and $(\tilde{\Omega},\tilde{G})$ such that : \\

1) $ \Omega = [0,A] \times \mathbb{T}^2$ and $ \tilde{\Omega} = [0, \tilde{A}] \times \mathbb{T}^2$ are toric cylinders. \\

2) The Riemannian metrics $G$ and $\tilde{G}$ on $\Omega$ have the conformally St\"ackel form (\ref{MetricConf}) - (\ref{StackelMatrix}) and satisfy the generic assumption (\ref{Generic}). \\

3) Their DN maps coincide, that is $\Lambda_G = \Lambda_{\tilde{G}}$ \footnote{Note that we identify the boundaries $\partial \Omega = \partial \tilde{\Omega}$ when stating this equality.}. \\

\noindent In this Section, we use the boundary determination results of \cite{LeU1989, KY2002} to obtain the maximum of informations on the metrics $G$ and $\tilde{G}$. Precisely we will use the facts that the metrics $G$ and $\tilde{G}$ and their normal derivatives coincide on $\partial \Omega = \partial \tilde{\Omega}$. We divide our boundary determination results into three steps. \\

\noindent \textit{Step 1}. Assume first that 
\begin{equation} \label{c0}
  G = \tilde{G}, \quad \textrm{on} \ \partial \Omega.
\end{equation}
Recall that $\Omega = \Omega_0 \cup \Omega_1$ and observe that
\begin{equation} \label{c0bis}
  \left\{ \begin{array}{c} 
	G_{|\Omega_0} = H_2^2(0,x^2,x^3) (dx^2)^2 + H_3^2(0,x^2,x^3) (dx^3)^2, \\
	G_{|\Omega_1} = H_2^2(A,x^2,x^3) (dx^2)^2 + H_3^2(A,x^2,x^3) (dx^3)^2.
	\end{array} \right. 
\end{equation}
Hence we get from (\ref{c0}) and (\ref{c0bis}) that
\begin{equation} \label{c1}
  H_j(0) = \tilde{H}_j(0), \quad H_j(A) = \tilde{H}_j(\tilde{A}), \quad j=2,3,
\end{equation}
where we used the shorthand notations \footnote{More generally, in this Section, given a function $f = f(x^1,x^2,x^3)$, we use the notations $f(0)$ and $f(A)$ for $f(0,x^2,x^3)$ and $f(A,x^2,x^3)$ respectively.}
$$
  H_j(0) = H_j(0,x^2,x^3), \quad H_j(A) = H_j(A,x^2,x^3), \quad j=2,3. 
$$
In particular, using the the definition of the diagonal coefficients $H_j$ given by (\ref{MetricConf}) and (\ref{MetricStackel}), we have at $x^1 = 0$ 
\begin{eqnarray}
  \frac{H_2^2(0)}{H_3^2(0)} = \frac{\tilde{H}_2^2(0)}{\tilde{H}_3^2(0)} & \Longleftrightarrow & \frac{h_2^2(0)}{h_3^2(0)} = \frac{\tilde{h}_2^2(0)}{\tilde{h}_3^2(0)}, \nonumber \\
	& \Longleftrightarrow & \frac{s^{31}(0)}{s^{21}(0)} = \frac{\tilde{s}^{31}(0)}{\tilde{s}^{21}(0)}, \nonumber \\
	& \Longleftrightarrow & \frac{s^{31}(0)}{\tilde{s}^{31}(0)} = \frac{s^{21}(0)}{\tilde{s}^{21}(0)}. \label{c2}
\end{eqnarray}
The remarkable properties of St\"ackel metrics manifest themselves here since the functions $\frac{s^{31}(0)}{\tilde{s}^{31}(0)}$ and $\frac{s^{21}(0)}{\tilde{s}^{21}(0)}$ appearing in (\ref{c2}) only depend on $x^2$ and $x^3$ respectively. We thus infer from (\ref{c2}) that there exists a constant $C_0$ such that 
\begin{equation} \label{c3}
  s^{31}(0) = C_0 \tilde{s}^{31}(0), \quad s^{21}(0) = C_0 \tilde{s}^{21}(0). 
\end{equation}
Similarly, working at $x^1 = A$, we see that there exists a constant $C_1$ such that 
\begin{equation} \label{c3A}
  s^{31}(A) = C_1 \tilde{s}^{31}(\tilde{A}), \quad s^{21}(A) = C_1 \tilde{s}^{21}(\tilde{A}). 
\end{equation}

Recalling that
$$
  s^{21} = s_{13} s_{32} - s_{12}s_{33}, \quad s^{31} = s_{12} s_{23} - s_{13}s_{22},
$$ 
we get from (\ref{c3})
\begin{equation} \label{c4}
  \left( \begin{array}{cc} s_{22} & s_{23} \\ s_{32} & s_{33} \end{array} \right) \left( \begin{array}{c} -s_{13}(0) \\ s_{12}(0) \end{array} \right) = C_0 \left( \begin{array}{cc} \tilde{s}_{22} & \tilde{s}_{23} \\ \tilde{s}_{32} & \tilde{s}_{33} \end{array} \right) \left( \begin{array}{c} -\tilde{s}_{13}(0) \\ \tilde{s}_{12}(0) \end{array} \right).
\end{equation}
Let us introduce the notation
\begin{equation} \label{T}
  T = \left( \begin{array}{cc} s_{22} & s_{23} \\ s_{32} & s_{33} \end{array} \right),
\end{equation}
and observe that $\det T = s^{11} \ne 0$ since $G$ is a Riemannian metric. Hence (\ref{c4}) can be rewritten as
\begin{equation} \label{c5}
  \frac{1}{C_0} \tilde{T}^{-1} T \left( \begin{array}{c} -s_{13}(0) \\ s_{12}(0) \end{array} \right) = \left( \begin{array}{c} -\tilde{s}_{13}(0) \\ \tilde{s}_{12}(0) \end{array} \right).
\end{equation}
In this equality, only the $2\times 2$-matrix $\tilde{T}^{-1} T$ depends on the variables $x^2,x^3$. Hence differentiating (\ref{c5}) with respect to $x^2$ and $x^3$, we obtain
$$
  \left( \partial_j \, \tilde{T}^{-1} T \right) \left( \begin{array}{c} -s_{13}(0) \\ s_{12}(0) \end{array} \right) = \left( \begin{array}{c} 0 \\ 0 \end{array} \right), \quad j=2,3. 
$$
We deduce from this that 
\begin{equation} \label{c6}
  \left( \begin{array}{c} -s_{13}(0) \\ s_{12}(0) \end{array} \right) \in \ker \left( \partial_j \, \tilde{T}^{-1} T \right), \quad j=2,3. 
\end{equation}
Since a similar analysis can be done at $x^1 = A$, we also have
\begin{equation} \label{c6A}
  \left( \begin{array}{c} -s_{13}(A) \\ s_{12}(A) \end{array} \right) \in \ker \left( \partial_j \, \tilde{T}^{-1} T \right), \quad j=2,3. 
\end{equation}
Thanks to our generic hypothesis (\ref{Generic}), we infer from (\ref{c6}) and (\ref{c6A}) that $\dim \ker \left( \partial_j \, \tilde{T}^{-1} T \right) = 2$ and thus
$$
    \partial_j \, \tilde{T}^{-1} T  = 0, \quad j=2,3. 
$$
We deduce from this that there exists a constant invertible matrix $G \in GL_2(\R)$ such that
\begin{equation} \label{c7}
  T = \tilde{T} G.
\end{equation}
Finally, we recall from the hidden invariances stated in Proposition \ref{InvariancesStackel} that we do not alter the St\"ackel metrics $g$ and $\tilde{g}$ by multiplying from the right the last two columns of their St\"ackel matrices by an invertible matrix $G$. Hence we can use this invariance and (\ref{c7}) to assume from now on that
\begin{equation} \label{c8}
  T = \tilde{T}, \quad \textrm{and thus} \quad s^{11} = \tilde{s}^{11}. 
\end{equation}

Let us come back to the equality 
\begin{equation} \label{c13}
  H_2^2(0) = \tilde{H}_2^2(0) \quad \Longleftrightarrow \quad \frac{(c^4 \det S)(0)}{s^{21}(0)} = \frac{(\tilde{c}^4 \det \tilde{S})(0)}{\tilde{s}^{21}(0)}.
\end{equation}
From (\ref{c3}) we thus obtain
\begin{equation} \label{c9}
  (c^4 \det S)(0) = C_0 \, (\tilde{c}^4 \det \tilde{S})(0).
\end{equation}
Likewise by working at $x^1 = A$, we obtain similarly
\begin{equation} \label{c9A}
  (c^4 \det S)(A) = C_1 \, (\tilde{c}^4 \det \tilde{S})(\tilde{A}).
\end{equation}

\begin{rem} \label{C0}
  Witout loss of generality, we can assume that the constants $C_0$ and $C_1$ appearing in (\ref{c3}) and (\ref{c3A}) are equal to $1$ in the following way. Recall that the DN maps are invariant by pullback by a diffeomorphim $\phi$ that is the identity on the boundary. Among such diffeomorphisms, we can use a change of coordinates of the form (\ref{CV})
$$
  y^1 = \int_0^{x^1} \sqrt{f_1}(s) ds \ \in [0,A_1], 	
$$
which leads to a new but equivalent expression of the metric $G$ given by (\ref{NewG}) - (\ref{NewS}) without changing the DN map $\Lambda_G$. In particular, in this new coordinate system, we can replace the initial first line of the St\"ackel matrix 
$$
  (s_{11}, s_{12}, s_{13}),
$$
by the new first line
$$
  (\frac{s_{11}}{f_1}, \frac{s_{12}}{f_1}, \frac{s_{13}}{f_1}).
$$
We see that we can always choose $f_1$ such that
\begin{equation} \label{Condf1}
  f_1(0) = C_0, \quad f_1(A) = C_1. 
\end{equation}
Putting this into (\ref{c3}) and (\ref{c3A}), we immediately see that, in this new coordinate system, the constants $C_0$ and $C_1$ disappear from (\ref{c3}) and (\ref{c3A}). Equivalently, this means that we can always assume from the very beginning that 
\begin{equation} \label{C0C1}
  C_0 = C_1 = 1.
\end{equation}
Note that the equalities (\ref{c3A}) and (\ref{c9A}) become in this new coordinate system
\begin{equation} \label{c10A}
  s^{31}(A_1) = \tilde{s}^{31}(\tilde{A}), \quad s^{21}(A_1) = \tilde{s}^{21}(\tilde{A}), \quad (c^4 \det S)(A_1) = (\tilde{c}^4 \det \tilde{S})(\tilde{A}),
\end{equation}
since the boundary $\{x^1 = A\}$ becomes $\{y^1 = A_1\}$. To avoid confusion, we identify $A$ with $A_1$ in what follows, that is we assume from the very beginning that we work with the variable $x^1 = y^1$. 
\end{rem}

Let us finish with the boundary determination results coming from (\ref{c0}). Recalling that the $R$-factor is given by (\ref{R}), \textit{i.e.}
$$
  R = \left( \frac{s^{11} s^{21} s^{31}}{c^4 \det S} \right)^{\frac{1}{4}}, 
$$
we see from (\ref{c3}), (\ref{c8}), (\ref{c9}), (\ref{C0C1}) and (\ref{c10A}) that
\begin{equation} \label{c11}
  R(0) = \tilde{R}(0), \quad R(A) = \tilde{R}(\tilde{A}).  
\end{equation}

Finally, having in mind the definition of the diagonal component $H_1^2$ of the metric $G$
$$
  H_1^2 = \frac{c^4 \det S}{s^{11}},
$$
we deduce from (\ref{c8}), (\ref{c9}), (\ref{C0C1}) and (\ref{c10A}) that
\begin{equation} \label{c12}
  H_1(0) = \tilde{H}_1(0), \quad H_1(A) = \tilde{H}_1(\tilde{A}).  
\end{equation}

\vspace{0.5cm}	
\noindent \textit{Step 2}. Assume next that the normal derivatives of the metrics $G$ and $\tilde{G}$ coincide on the boundary $\partial \Omega$, \textit{i.e.}
\begin{equation} \label{d0}
  \partial_\nu G = \partial_{\tilde{\nu}} \tilde{G}, \quad \textrm{on} \ \partial \Omega.
\end{equation}
At the boundary $\Omega_0 = \{x^1 = 0\}$, we observe that
$$
  \partial_\nu G_{|\Omega_0} = -\frac{1}{H_1(0)} \left( (\partial_1 H_2^2)(0) (dx^2)^2 + (\partial_1 H_3^2)(0) (dx^3)^2 \right). 
$$  
Hence from (\ref{d0}) we get 
$$
  \frac{(\partial_1 H_j^2)(0)}{H_1(0)} = \frac{(\partial_1 \tilde{H}_j^2)(0)}{\tilde{H}_1(0)}, \quad j=2,3, 
$$
which can be rewritten using (\ref{c1}) and (\ref{c12}) as
\begin{equation} \label{d1}
  (\partial_1 \log H_j^2)(0) = (\partial_1 \log \tilde{H}_j^2)(0), \quad j=2,3.
\end{equation}
Recalling that
$$
  H_j^2 = \frac{c^4 \det S}{s^{j1}}, \quad j=2,3,
$$
we infer from (\ref{d1}) that
\begin{equation} \label{d2}
  \left\{ \begin{array}{rcl} 
	(\partial_1 \log c^4 \det S)(0) - (\partial_1 \log s^{21})(0) & = & (\partial_1 \log \tilde{c}^4 \det \tilde{S})(0) - (\partial_1 \log \tilde{s}^{21})(0), \\
	(\partial_1 \log c^4 \det S)(0) - (\partial_1 \log s^{31})(0) & = & (\partial_1 \log \tilde{c}^4 \det \tilde{S})(0) - (\partial_1 \log \tilde{s}^{31})(0). 
	\end{array} \right. 
\end{equation}
Note here that we can use the same change of variables of the form (\ref{CV}) as in Remark \ref{C0} to assume from the very beginning that
\begin{equation} \label{d3}
  (\partial_1 \log s^{21})(0) = (\partial_1 \log \tilde{s}^{21})(0). 
\end{equation}
Indeed, it suffices to choose $f_1$ such that (\ref{Condf1}) and the additional condition
\begin{equation} \label{Condf1bis}
  (\partial_1 \log s^{21})(0) - (\partial_1 \log f_1)(0) = (\partial_1 \log \tilde{s}^{21})(0),  
\end{equation}
hold. Hence we obtain from (\ref{d2}) and (\ref{d3})
\begin{equation} \label{d4}
  \left\{ \begin{array}{rcl}
	(\partial_1 \log s^{21})(0) & = & (\partial_1 \log \tilde{s}^{21})(0), \\
	(\partial_1 \log s^{31})(0) & = & (\partial_1 \log \tilde{s}^{31})(0), \\
	(\partial_1 \log c^4 \det S)(0) & = & (\partial_1 \log \tilde{c}^4 \det \tilde{S})(0). 
	\end{array} \right.
\end{equation}
Of course, a similar analysis can be performed at the boundary component given by $\partial\Omega_1 =\{x^1 = A\}$. Thus we also obtain
\begin{equation} \label{d5}
  \left\{ \begin{array}{rcl}
	(\partial_1 \log s^{21})(A) & = & (\partial_1 \log \tilde{s}^{21})(\tilde{A}), \\
	(\partial_1 \log s^{31})(A) & = & (\partial_1 \log \tilde{s}^{31})(\tilde{A}), \\
	(\partial_1 \log c^4 \det S)(A) & = & (\partial_1 \log \tilde{c}^4 \det \tilde{S})(\tilde{A}). 
	\end{array} \right.
\end{equation}

Finally, recalling the definition of the contracted Christoffel symbol
$$
  \Gamma_1 = - \partial_1 \log \frac{H_2 H_3}{H_1} = -\frac{1}{2} \partial_1 \log \left( \frac{ c^4 \det S \,s^{11}}{s^{21} s^{31}} \right)= -\frac{1}{2} \partial_1 \log \left( \frac{ c^4 \det S}{s^{21} s^{31}} \right),
$$
we immediately obtain from (\ref{d4}) and (\ref{d5})
\begin{equation} \label{d6}
  \Gamma_1(0) = \tilde{\Gamma}_1(0), \quad \Gamma_1(A) = \tilde{\Gamma}_1(\tilde{A}). 
\end{equation}

\begin{rem} 
  The previous boundary determination results have been obtained using the variable $y^1$ defined by (\ref{CV}) such that (\ref{Condf1}) and (\ref{Condf1bis}) hold. Observe that, putting (\ref{C0C1}) into (\ref{c4}) and using (\ref{c8}), the use of the variable $y^1$ with the identification $A_1 = A$ implies the following equalities
\begin{equation} \label{s12}
  s_{12}(0) = \tilde{s}_{12}(0), \quad s_{13}(0) = \tilde{s}_{13}(0), \quad s_{12}(A) = \tilde{s}_{12}(\tilde{A}), \quad s_{13}(A) = \tilde{s}_{13}(\tilde{A}). 
\end{equation} 
Similarly, the two first lines of (\ref{d4}) together with (\ref{c3}) can be rewritten as
$$
  T \left( \begin{array}{c} -s'_{13}(0) \\ s'_{12}(0) \end{array} \right) = \tilde{T} \left( \begin{array}{c} -\tilde{s}'_{13}(0) \\ \tilde{s}'_{12}(0) \end{array} \right).  
$$
Hence, using (\ref{c8}) once more and a similar analysis at $y^1 = A$, we obtain the equalities 
\begin{equation} \label{s12'}
  s'_{12}(0) = \tilde{s}'_{12}(0), \quad s'_{13}(0) = \tilde{s}'_{13}(0), \quad s'_{12}(A) = \tilde{s}'_{12}(\tilde{A}), \quad s'_{13}(A) = \tilde{s}'_{13}(\tilde{A}). 
\end{equation}
Notice that the equalities (\ref{s12}) and (\ref{s12'}) could be used to define the change of variable $y^1$ from the outset.  
\end{rem}

\vspace{0.5cm}
\noindent \textit{Step 3}. The previous results obtained in Steps 1 and 2 exhaust all the information that we can extract from boundary determination arguments. To go further, we need to exploit the particular structure (\ref{DNStructure}) - (\ref{AG}) of the DN map. 

Recall first that
$$
  \Lambda_G = \left( \begin{array}{cc} \frac{-1}{H_1(0)} & 0 \\ 0 & \frac{1}{H_1(A)} \end{array} \right) \Bigg[ 
	            \left( \begin{array}{cc} \frac{\Gamma_1(0)}{2} & 0 \\ 0 & \frac{\Gamma_1(A)}{2} \end{array} \right) + \ \left( \begin{array}{cc} R(0) & 0 \\ 0 & R(A) \end{array} \right) A_G \left( \begin{array}{cc} \frac{1}{R(0)} & 0 \\ 0 & \frac{1}{R(A)} \end{array} \right) \Bigg]
$$
where 
$$
  A_G = \bigoplus_{m=1}^\infty A_G^m, \quad A_G^m := (A_G)_{|\langle Y_m \rangle} = \left( \begin{array}{cc} M(\mu_m^2,\nu_m^2)  & \frac{1}{\Delta(\mu_m^2,\nu_m^2)} \\ \frac{1}{\Delta(\mu_m^2,\nu_m^2)} & N(\mu_m^2,\nu_m^2) \end{array} \right).
$$
From $\Lambda_G = \Lambda_{\tilde{G}}$ and from the boundary determination results (\ref{c11}), (\ref{c12}) and (\ref{d6}), we obtain immediately the equality between operators 
\begin{equation} \label{e0}
  A_G = A_{\tilde{G}}, \quad \textrm{on} \ \ L^2(\mathbb{T}^2; \, s^{11} dx^2 dx^3) \otimes L^2(\mathbb{T}^2; \, s^{11} dx^2 dx^3). 
\end{equation}
Denote by $(\omega_m, \varphi_m)_{m \in \Z^*}$ and $(\tilde{\omega}_m, \tilde{\varphi}_m)_{m \in \Z^*}$ the eigenvalues and eigenfunctions of $A_G$ and $A_{\tilde{G}}$ respectively. As a consequence of (\ref{e0}) we have \footnote{In fact, the common eigenvalues and corresponding eigenspaces $(\omega_m, E_m)$ of $A_G$ and $A_{\tilde{G}}$ coincide. But remember that the operator $A_G$ does not depend on the choice of a Hilbert basis within an eigenspace $E_m$ so that without loss of generality we can also identify the eigenfunctions. }  
\begin{equation} \label{e1}
  \omega_m = \tilde{\omega}_m, \quad \varphi_m = \tilde{\varphi}_m, \quad \forall m \in \Z^*. 
\end{equation}

Let us make explicit the eigenvalues $(\omega_m)_{m \in \Z^*}$ and their corresponding eigenfunctions $(\varphi_m)_{m \in \Z^*}$. Recall that on each harmonic $\langle Y_m \rangle, \ m \geq 1$, the operator $A_G$ simplifies in the $2\times2$-matrix given by (\ref{AG}), \textit{i.e.}   
$$
  A_G^m = \left( \begin{array}{cc} M(\mu_m^2,\nu_m^2)  & \frac{1}{\Delta(\mu_m^2,\nu_m^2)} \\ \frac{1}{\Delta(\mu_m^2,\nu_m^2)} & N(\mu_m^2,\nu_m^2) \end{array} \right),
$$
Diagonalizing, we obtain two eigenvalues attached to each index $m \geq 1$
\begin{equation} \label{e2}
	  \omega_{\pm m} := \frac{M_m + N_m \pm \sqrt{(M_m - N_m)^2 + \frac{4}{\Delta_m^2}}}{2}, \quad m \geq 1, 
\end{equation}
associated to the eigenfunctions
\begin{equation} \label{e3}
  \varphi_m^+ = \left( \begin{array}{c} 1 \\ x^+_m \end{array} \right) \otimes Y_m, \quad \varphi_m^- = \left( \begin{array}{c} 1 \\ x^-_m \end{array} \right) \otimes Y_m, \quad m \geq 1, 
\end{equation}
where we used the shorthand notation
$$
  \Delta_m = \Delta(\mu_m^2,\nu_m^2), \quad M_m = M(\mu_m^2,\nu_m^2), \quad N_m = N(\mu_m^2,\nu_m^2), 
$$
and
$$
  x^\pm_m = \frac{\Delta_m}{2} \left( N_m - M_m \pm \sqrt{(M_m+N_m)^2 + \frac{4}{\Delta_m^2}} \right).
$$
In particular, using (\ref{e1}), (\ref{e2}) and (\ref{e3}), we see that there exists a bijection $\theta: \, \N^* \longrightarrow \N^*$ such that
\begin{equation} \label{e4}
  Y_m = \tilde{Y}_{\theta(m)}, \quad \forall m \geq 1. 
\end{equation}
It is not clear at this stage why the bijection $\theta$ should be the identity. However, we can proceed in the following way. Recall from (\ref{HL}) and (\ref{A}) that
\begin{equation} \label{e5}
  \left( \begin{array}{c} -\partial_2^2 \\ -\partial_3^2 \end{array} \right) = - T \left( \begin{array}{c} H \\ L \end{array} \right) + \left( \begin{array}{c} \phi_2 \\ \phi_3 \end{array} \right) = - \tilde{T} \left( \begin{array}{c} \tilde{H} \\ \tilde{L} \end{array} \right) + \left( \begin{array}{c} \tilde{\phi}_2 \\ \tilde{\phi}_3 \end{array} \right),
\end{equation}
where $T$ denotes the $2\times2$-matrix (\ref{T}) and $T = \tilde{T}$ thanks to (\ref{c8}). If we apply the operators in the equality (\ref{e5}) to $Y_m = \tilde{Y}_{\theta(m)}$, we thus obtain
\begin{equation} \label{e6}
  \left[ - T \left( \begin{array}{c} \mu_m^2 \\ \nu_m^2 \end{array} \right) + \left( \begin{array}{c} \phi_2 \\ \phi_3 \end{array} \right) \right] \otimes Y_m = \left[ - T \left( \begin{array}{c} \tilde{\mu}_{\theta(m)}^2 \\ \tilde{\nu}_{\theta(m)}^2 \end{array} \right) + \left( \begin{array}{c} \tilde{\phi}_2 \\ \tilde{\phi}_3 \end{array} \right) \right] \otimes Y_m.
\end{equation}
If we divide (\ref{e6}) by $Y_m$ (outside the nodal sets), we get using the invertibility of $T$ that 
\begin{equation} \label{e7}
  T^{-1} \left( \begin{array}{c} \tilde{\phi}_2 - \phi_2 \\ \tilde{\phi}_3 - \phi_3 \end{array} \right) = \left( \begin{array}{c} \tilde{\mu}_{\theta(m)}^2 - \mu_m^2 \\ \tilde{\nu}_{\theta(m)}^2 - \nu_m^2 \end{array} \right). 
\end{equation}
The remarkable properties of St\"ackel metrics manifest themselves again here since the left-hand side of (\ref{e7}) depends on the variables $(x^2,x^3)$ while the right-hand side of (\ref{e7}) is given by constants. Hence we infer that there exist two constants $B_1$ and $B_2$ such that
\begin{equation} \label{e8}
  T^{-1} \left( \begin{array}{c} \tilde{\phi}_2 - \phi_2 \\ \tilde{\phi}_3 - \phi_3 \end{array} \right) = \left( \begin{array}{c} B_1 \\ B_2 \end{array} \right) = \left( \begin{array}{c} \tilde{\mu}_{\theta(m)}^2 - \mu_m^2 \\ \tilde{\nu}_{\theta(m)}^2 - \nu_m^2 \end{array} \right). 
\end{equation}
Putting the first equality of (\ref{e8}) into (\ref{e5}), we get easily
\begin{equation} \label{e9}
  \left( \begin{array}{c} \tilde{H} \\ \tilde{L} \end{array} \right) = \left( \begin{array}{c} H \\ L \end{array} \right) + \left( \begin{array}{c} B_1 \\ B_2 \end{array} \right), 
\end{equation}
that is the angular operators $(H,L)$ differ from $(\tilde{H}, \tilde{L})$ by mere constants $B_1$  and $B_2$. In particular, their common eigenfunctions are the same
\begin{equation} \label{e10}
  Y_m = \tilde{Y}_m, \quad \forall m \geq 1, 
\end{equation}
and their corresponding eigenvalues differ by the same constants
$$
  \tilde{\mu}_m^2= \mu_m^2 + B_1, \quad \tilde{\nu}_m^2 = \nu_m^2 + B_2, \quad \forall m \geq 1.  
$$
Finally, as pointed out in Section \ref{II2}, after Proposition \ref{StackelForm}, we can set the constants $B_1$ and $B_2$ equal to zero since this corresponds to an intrinsic invariance of the separation of variables' procedure. We thus assume from now on that
\begin{equation} \label{e11}
  \mu_m^2 = \tilde{\mu}_m^2, \quad \nu_m^2 = \tilde{\nu}_m^2, \quad \forall m \geq 1,  
\end{equation}
and (from (\ref{e8}))
\begin{equation} \label{e12}
  \phi_2 = \tilde{\phi}_2, \quad \phi_3 = \tilde{\phi}_3. 
\end{equation}

\vspace{0.5cm}
\noindent \textit{Summary}. At this stage of the proof, under our main assumption $\Lambda_G = \Lambda_{\tilde{G}}$, we have shown
$$
  \left( \begin{array}{cc} s_{22} & s_{23} \\ s_{32} & s_{33} \end{array} \right) = \left( \begin{array}{cc} \tilde{s}_{22} & \tilde{s}_{23} \\ \tilde{s}_{32} & \tilde{s}_{33} \end{array} \right), \quad \phi_2 = \tilde{\phi}_2, \quad \phi_3 = \tilde{\phi}_3,
$$
and
$$
  \mu_m^2 = \tilde{\mu}_m^2, \quad \nu_m^2 = \tilde{\nu}_m^2, \quad Y_m = \tilde{Y}_m, \quad \forall m \geq 1.
$$


\subsection{The multi-parameter CAM method} \label{III3}

We continue extracting more informations on the metrics $G$ and $\tilde{G}$ from the equality between operators
$$
  A_G = A_{\tilde{G}}, 
$$
and the coincidence between the angular parts of the metrics $G$ and $\tilde{G}$ that leads to the equalities
$$	
	\mu_m^2 = \tilde{\mu}_m^2, \quad \nu_m^2 = \tilde{\nu}_m^2, \quad Y_m = \tilde{Y}_m, \quad \forall m \geq 1,
$$
proved in Section \ref{III2}. Using the definition of $A_G$, this implies in particular that
\begin{equation} \label{f1}
  M(\mu_m^2, \nu_m^2) = \tilde{M}(\mu_m^2, \nu_m^2), \quad \forall m \geq 1. 
\end{equation}
Hence the two WT functions associated to the separated ODE
\begin{eqnarray}
	-u_m'' + [\,\mu_m^2 s_{12}(x^1) + \nu_m^2 s_{13}(x^1) - \phi_1(x^1)] \, u_m & = & 0, \quad \quad x^1 \in (0,A), \label{Sepa}\\ 
	-\tilde{u}_m'' + [\,\mu_m^2 \tilde{s}_{12}(x^1) + \nu_m^2 \tilde{s}_{13}(x^1) - \tilde{\phi}_1(x^1)] \, \tilde{u}_m & = & 0, \quad \quad x^1 \in (0,\tilde{A}), \label{Sepb} 
\end{eqnarray}
coincide when evaluated on the joint spectrum $J := \{(\mu_m^2, \nu_m^2), \ m \geq 1 \}$. 

Our first task will be to show that the equality (\ref{f1}) on the discrete subset $J$ can be extended to the whole plane $\C^2$, \textit{i.e.}
\begin{equation} \label{f2}
  M(\mu^2, \nu^2) = \tilde{M}(\mu^2, \nu^2), \quad \forall (\mu, \nu) \in \C^2 \backslash \{poles\} 
\end{equation}
The passage from (\ref{f1}) to (\ref{f2}) is what we call the multi-parameter CAM method and turns out to be the central technical tool from which we will able to solve the inverse problem. To do this, note first that (\ref{f1}) can be rewritten using (\ref{WT}) as 
\begin{equation} \label{f3}
  D(\mu_m^2, \nu_m^2) \tilde{\Delta}(\mu_m^2, \nu_m^2) - \tilde{D}(\mu_m^2, \nu_m^2) \Delta(\mu_m^2, \nu_m^2) = 0, \quad \forall m \geq 1. 
\end{equation}
Define now the function
\begin{equation} \label{F}
  F(\mu,\nu) := D(\mu^2, \nu^2) \tilde{\Delta}(\mu^2, \nu^2) - \tilde{D}(\mu^2, \nu^2) \Delta(\mu^2, \nu^2).
\end{equation}
Then $F$ is clearly analytic \footnote{The functions $c_j, \, s_j, \ j=0,1$ and thus the functions $\Delta$, $D$ and $F$ are analytic in the variables $\mu$ and $\nu$ independently thanks to standard theorems on ODE depending analytically on parameters. Hence the function $F$ is analytic on $\C^2$ due to the Hartogs Theorem.} on $\C^2$ and vanishes on the "square-root" of the joint spectrum $J$ thanks to (\ref{f3}). Hence, in order to prove (\ref{f2}), it will be enough to prove that $F$ vanishes identically. 

To go further, we will use the following result of Berndtsson \cite{Ber1978} (that we took from Bloom \cite{Bl1990}) which provides a sufficient condition for a discrete set to be a \emph{uniqueness set} of a bounded analytic function of several variables. 

\begin{thm}[Berndtsson, 1978] \label{Be}
Let $K$ be an open cone in $\R^n$ with vertex at the origin and $T(K) = \{ z \in \C^n \ / \ \Re(z) \in K \}$. Suppose $f$ is bounded and analytic on $T(K)$. Let $E$ be a discrete subset of $K$ such that for some constant $h>0$, $e_1, \, e_2 \in E$ implies that $|e_1 - e_2| \geq h$. Let $n(r) = \# E \cap B(0,r)$. Assume that $f$ vanishes on $E$. Then $f$ is identically $0$ if 
$$
  \varlimsup_{r \to \infty} \frac{n(r)}{r^n} > 0. 
$$ 
\end{thm}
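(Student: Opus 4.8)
The plan is to argue by contraposition: suppose $f$ is holomorphic and bounded on $T(K)$, vanishes on $E$, and is \emph{not} identically zero; I will show that then $E$ cannot have positive upper density. Normalize $|f|\le 1$ and put $u=\log|f|$, a plurisubharmonic function on $T(K)$ with $u\le 0$ which is not identically $-\infty$. Since $K$ is an open subset of the totally real $n$-plane $\R^n\subset\C^n$, the identity principle gives a point $a\in K$ with $u(a)>-\infty$, and note that $\operatorname{dist}_{\C^n}(b,\partial T(K))=\operatorname{dist}_{\R^n}(b,\partial K)$ for every real $b\in K$, so a Euclidean ball $B(b,\rho)$ lies in $T(K)$ as soon as $\rho\le\operatorname{dist}_{\R^n}(b,\partial K)$. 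The one-dimensional case ($n=1$, $K$ a half-line, $T(K)$ a half-plane) is the model: a Cayley transform sends $E$ to a sequence $\{w_k\}\subset\mathbb{D}$ with $1-|w_k|\asymp 1/e_k$, and the $h$-separation together with $\varlimsup n(r)/r=:\delta>0$ forces $\sum 1/e_k=\infty$, contradicting the Blaschke condition; hence $f\equiv 0$. The whole point is to bootstrap this estimate to a general cone.

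For general $n$ I would use Jensen's inequality in the form of the Riesz representation of $u$ on admissible balls: if $B(b,\rho)\subset T(K)$ and some $e\in E$ lies in $B(b,\rho/2)$, then, since $u\le 0$ on $\partial B(b,\rho)$, the Riesz mass of $u$ in $B(b,\rho)$ is at least $-u(b)$; by Lelong monotonicity of the area of the zero variety of $f$ and the fact that the balls $B(e,h/2)$, $e\in E$, are pairwise disjoint, this mass also dominates $c_n\,\#\bigl(E\cap B(b,\rho/2)\bigr)$. In other words, the number of zeros ``seen from the interior'' of $T(K)$ is controlled by $-u$ at the centre. Feeding this into a scaling argument exploits that $K$ is a cone: the dilates $f_t(z)=f(tz)$ are uniformly bounded and their zero sets $t^{-1}E$ have, along a suitable sequence $t=r_j\to\infty$, at least $\tfrac{\delta}{2}r_j^{\,n}$ points in the unit ball; a shell count shows that a fixed positive fraction of them stay at distance $\ge c$ from $\partial K$, hence deep inside $T(K)$; applying the Jensen bound on a fixed-size admissible ball to each $f_{r_j}$ then shows the (suitably normalized) Riesz mass of $\log|f|$ over $B(0,r)\cap K$ grows at least like $r^{\,n+1}$, which is incompatible with $u$ being bounded above. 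Phrased invariantly, this is a Blaschke-type condition for $H^\infty(T(K))$: the weighted area of the zero divisor must be finite, whereas $h$-separation together with positive upper density makes it infinite.

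The delicate point — the step that genuinely needs Berndtsson's argument rather than the soft bookkeeping above — is making the Jensen/Riesz estimate rigorous on the \emph{unbounded} domain $T(K)$, where one has no control of the boundary values of $f$ and $-u$ may a priori grow (the example $f(z)=e^{-\langle\xi,z\rangle}$ with $\xi$ in the dual cone shows $-u$ can be linear). One must rule out that the density of $E$, a global hypothesis, escapes entirely into a neighbourhood of $\partial T(K)$. Berndtsson achieves this by an integral-geometric estimate for the Riesz measure of $\log|f|$ — effectively averaging the one-dimensional Jensen inequality over a family of complex lines and combining it with a covering lemma — so that a definite portion of the zeros is forced to accumulate at bounded relative distance from interior points. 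Granting that estimate, the counting bound $n(r)=o(r^{\,n})$ follows, contradicting $\varlimsup n(r)/r^{\,n}>0$; therefore $f\equiv 0$, which is the contrapositive of the claimed statement.
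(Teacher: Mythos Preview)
The paper does not prove this theorem. Theorem~\ref{Be} is quoted from the literature: it is attributed to Berndtsson (1978), with the specific formulation taken from Bloom (1990), and is used as a black box in the multi-parameter CAM argument of Section~\ref{III3}. There is therefore no ``paper's own proof'' to compare your proposal against.

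As for your sketch itself: the strategy (contraposition, $u=\log|f|$ plurisubharmonic, Jensen/Riesz mass bounds, scaling via the cone structure, reduction in spirit to a Blaschke-type obstruction) is the natural line of attack and is broadly consistent with how uniqueness-set results of this type are proved. But your write-up is, by your own admission, not a proof: the paragraph beginning ``The delicate point'' explicitly defers the crucial step --- controlling the Riesz measure of $\log|f|$ on the unbounded tube $T(K)$ and ruling out that the zeros escape to $\partial T(K)$ --- back to ``Berndtsson's argument''. Everything before that is heuristic bookkeeping; the actual analytic content (the integral-geometric estimate you allude to) is precisely what is being asserted rather than established. So what you have written is an informed outline of why the theorem should hold, not a self-contained proof, and in the context of this paper no proof is required since the result is simply cited.
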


In order to apply Theorem \ref{Be}, we need to define an analytic function that is bounded on a conic set of the form $T(K)$ and that satisfies the above properties. The natural candidate - the function $F$ - is not bounded and we need to rescale it in a convenient way. Hence let us first prove some universal estimates for $F$. 

\begin{prop} \label{EstF}
There exist positive constants $\bar{A}, \bar{B}, C > 0$ such that for all $(\mu,\nu) \in \C^2$
$$
  |D(\mu^2, \nu^2)|, \, |\Delta(\mu^2, \nu^2)| \ \leq \ C \, e^{\frac{\bar{A}}{2} |\Re(\mu)| + \frac{\bar{B}}{2} |\Re(\nu)|}. 
$$   
As a consequence,
$$
  |F(\mu, \nu)| \ \leq \ C \, e^{\bar{A} |\Re(\mu)| + \bar{B} |\Re(\nu)|}.
$$
\end{prop}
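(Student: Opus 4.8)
The plan is to reduce Proposition \ref{EstF} to a uniform bound on the fundamental solutions of the radial ODE \eqref{RadialODE}, and to obtain that bound from an elementary pointwise estimate on $\Re\sqrt{q}$ combined with a Liouville--Green (WKB) analysis, adapting the arguments of Gobin \cite{Go2018}. First I would record that, since the Wronskian of two solutions of \eqref{RadialODE} is constant in $x^1$, evaluating it at the two endpoints $x^1=0$ and $x^1=A$ gives, with the notations of \eqref{Char}--\eqref{WT},
$$
  \Delta(\mu^2,\nu^2)=W(s_0,s_1)=s_0(A)=-s_1(0),\qquad D(\mu^2,\nu^2)=W(c_0,s_1)=c_0(A)=s_1'(0).
$$
Hence it suffices to bound, uniformly in $(\mu,\nu)\in\C^2$, the endpoint values of the fundamental solutions of $-u''+q\,u=0$ with potential $q(x^1)=\mu^2 s_{12}(x^1)+\nu^2 s_{13}(x^1)-\phi_1(x^1)$, and it is convenient to read each quantity off at the endpoint where the associated Cauchy data are of ``sine type'' ($u=0$, $u'=1$), which prevents spurious polynomial factors in $\mu,\nu$ coming from the initial data.

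The central step will be the pointwise estimate: there is a universal constant $C_0$ with
$$
  \bigl|\Re\sqrt{q(x^1)}\bigr|\ \leq\ \sqrt{s_{12}(x^1)}\,|\Re\mu|+\sqrt{s_{13}(x^1)}\,|\Re\nu|+C_0,\qquad x^1\in[0,A].
$$
This is elementary: from the identity $(\Re\sqrt{\zeta})^2=\tfrac12(|\zeta|+\Re\zeta)$ and $s_{12},s_{13}>0$ (Proposition \ref{StackelForm}) one gets, for $\zeta=s_{12}\mu^2+s_{13}\nu^2$,
$$
  |\zeta|+\Re\zeta\ \leq\ \bigl(s_{12}|\mu|^2+s_{13}|\nu|^2\bigr)+\bigl(s_{12}\Re(\mu^2)+s_{13}\Re(\nu^2)\bigr)=2s_{12}(\Re\mu)^2+2s_{13}(\Re\nu)^2,
$$
so that $|\Re\sqrt{\zeta}|\leq\sqrt{s_{12}}\,|\Re\mu|+\sqrt{s_{13}}\,|\Re\nu|$, and the bounded perturbation $-\phi_1$ shifts $\Re\sqrt{\,\cdot\,}$ by at most a universal constant. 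Integrating over $[0,A]$ then gives $\int_0^A|\Re\sqrt{q(t)}|\,dt\leq \tfrac{\bar A}{2}|\Re\mu|+\tfrac{\bar B}{2}|\Re\nu|+C_0A$ with $\bar A=2\int_0^A\sqrt{s_{12}}$ and $\bar B=2\int_0^A\sqrt{s_{13}}$ (these constants being later enlarged so as to dominate simultaneously the analogous quantities for $\tilde G$).

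Next I would convert this pointwise growth rate into a bound on the solutions. On any subinterval of $[0,A]$ on which the potential stays bounded away from $0$, the Liouville--Green substitution $u=q^{-1/4}v$, $\xi=\int\sqrt{q}$, transforms \eqref{RadialODE} into $\ddot v=(1+\psi)v$ with $\int|\psi|\,d\xi$ uniformly bounded in $(\mu,\nu)$ — in fact tending to $0$ as $|\mu|+|\nu|\to\infty$, since $\psi$ is built from $q''/q^2$ and $(q')^2/q^3$ while $|q|$ is of order $|\mu|^2+|\nu|^2$ there — whence $|u|\lesssim e^{|\Re\xi|}\leq e^{\int|\Re\sqrt{q}|}$, i.e.\ precisely the claimed rate. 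The one genuine obstacle is uniformity near \emph{turning points}, i.e.\ near the (uniformly finitely many) points where $q$ is small: there the Liouville--Green error degenerates and one must patch in a local Airy/Langer analysis. This turns out to be harmless: if $q(x^1)=\mu^2 s_{12}(x^1)+\nu^2 s_{13}(x^1)-\phi_1(x^1)$ is small at some $x^1$, then $\mu^2 s_{12}(x^1)+\nu^2 s_{13}(x^1)$ is bounded, hence so is its real part $s_{12}(x^1)\bigl((\Re\mu)^2-(\Im\mu)^2\bigr)+s_{13}(x^1)\bigl((\Re\nu)^2-(\Im\nu)^2\bigr)$, which forces $s_{12}(x^1)(\Re\mu)^2+s_{13}(x^1)(\Re\nu)^2$ to be comparable to $s_{12}(x^1)|\mu|^2+s_{13}(x^1)|\nu|^2$; when $|\mu|+|\nu|$ is large this makes $e^{\frac{\bar A}{2}|\Re\mu|+\frac{\bar B}{2}|\Re\nu|}$ larger than any fixed power of $|\mu|+|\nu|$, so it absorbs the polynomial corrections produced by the Langer analysis, while for $(\mu,\nu)$ in a bounded set the bound is trivial by analyticity of $\Delta$ and $D$. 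Tracking $(|u|,|u'|)$ through the WKB regions and the turning-point regions then yields
$$
  |D(\mu^2,\nu^2)|,\ |\Delta(\mu^2,\nu^2)|\ \leq\ C\,e^{\frac{\bar A}{2}|\Re\mu|+\frac{\bar B}{2}|\Re\nu|},\qquad(\mu,\nu)\in\C^2,
$$
and the same bound for $\tilde{\Delta},\tilde{D}$.

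Finally, the statement for $F$ follows at once from the definition \eqref{F} and the triangle inequality:
$$
  |F(\mu,\nu)|\ \leq\ |D(\mu^2,\nu^2)|\,|\tilde{\Delta}(\mu^2,\nu^2)|+|\tilde{D}(\mu^2,\nu^2)|\,|\Delta(\mu^2,\nu^2)|\ \leq\ C\,e^{\bar A|\Re\mu|+\bar B|\Re\nu|}
$$
after relabelling the constant. In summary, the one non-routine ingredient is the uniform WKB control near the turning points of $q$; the reduction via the Wronskian identities, the pointwise $\Re\sqrt{q}$ estimate, and the passage to $F$ are all straightforward.
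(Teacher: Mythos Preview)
Your approach is genuinely different from the paper's and worth comparing. The paper does \emph{not} attempt a direct WKB analysis of the two-parameter potential $q=\mu^2 s_{12}+\nu^2 s_{13}-\phi_1$. Instead it proceeds in three steps: (i) for each \emph{fixed} $\nu$, a Liouville transformation $u^1=\int_0^{x^1}\sqrt{s_{12}}$ puts \eqref{RadialODE} into the one-parameter normal form $-\ddot U+q_\nu U=-\mu^2 U$, and the classical P\"oschel--Trubowitz estimates give $|D|,|\Delta|\le C(\nu)\,e^{\bar A|\Re\mu|}$; symmetrically one obtains $C(\mu)\,e^{\bar B|\Re\nu|}$; (ii) on the purely imaginary slice $(\mu,\nu)\in(i\R)^2$ a \emph{separate} $(y,y')$-dependent Liouville transformation yields a uniform bound $|F(iy,iy')|\le C$; (iii) two applications of Phragm\'en--Lindel\"of (first in $\mu$ with $\nu\in i\R$ fixed, then in $\nu$ with $\mu\in\C$ fixed) upgrade the non-uniform bounds of (i) to the uniform bound of the Proposition. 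The point of this detour is that the one-parameter normal forms in (i) and (ii) have \emph{no turning points}: the spectral parameter has been cleanly split off from the potential, so the standard single-variable estimates apply with no Langer patching whatsoever.

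Your direct route is not wrong in principle, and the Wronskian identities $\Delta=s_0(A)$, $D=c_0(A)$ together with the clean pointwise inequality $|\Re\sqrt{s_{12}\mu^2+s_{13}\nu^2}|\le\sqrt{s_{12}}\,|\Re\mu|+\sqrt{s_{13}}\,|\Re\nu|$ are correct and pleasant. But the step you yourself flag as ``the one non-routine ingredient'' --- uniform Liouville--Green control across the turning set of $q$ --- is where essentially all the work hides, and your sketch does not close it. Your observation that near a turning point $s_{12}(\Re\mu)^2+s_{13}(\Re\nu)^2$ and $s_{12}(\Im\mu)^2+s_{13}(\Im\nu)^2$ are comparable, so that the exponential dominates polynomial corrections, handles a \emph{single} Langer patch; it does not by itself give uniformity as the zeros of $q$ move with $(\mu,\nu)$, coalesce, or multiply (the assertion ``uniformly finitely many'' already needs an argument depending on $s_{12},s_{13}$), nor does it control the matching constants between WKB and Airy regions uniformly over a four-real-parameter family. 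This can very likely be made rigorous, but it is precisely the labour that the paper's Phragm\'en--Lindel\"of argument is designed to bypass.
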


The proof of this Proposition requires some preliminary steps. \\

\vspace{0.5cm}
\noindent \textit{Step 1}. We first prove some estimates in $\mu$ and $\nu$ independently. 

\begin{lemma} \label{Estimatemunu}
1. For each $\nu \in \C$ fixed, there exists positive constants $\bar{A}, \, C(\nu) > 0$ such that
$$
  |D(\mu^2, \nu^2)|, \, |\Delta(\mu^2, \nu^2)|, \, |F(\mu, \nu)| \ \leq \ C(\nu) \, e^{\bar{A} |\Re(\mu)|}.
$$ 
2. For each $\mu \in \C$ fixed, there exists positive constants $\bar{B}, \, C(\nu) > 0$ such that
$$
  |D(\mu^2, \nu^2)|, \, |\Delta(\mu^2, \nu^2)|, \, |F(\mu, \nu)| \ \leq \ C(\mu) \, e^{\bar{B} |\Re(\nu)|}.
$$
\end{lemma}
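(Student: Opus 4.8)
The plan is to prove Lemma \ref{Estimatemunu} by recognizing that, for each fixed value of one of the two parameters, the radial ODE (\ref{RadialODE}) is a one-dimensional Schr\"odinger equation with a potential that depends analytically on the remaining parameter, and whose fundamental solutions therefore enjoy the classical exponential-type bounds in that parameter. Concretely, I would fix $\nu \in \C$ and view
$$
  -u'' + \big( \mu^2 s_{12}(x^1) + q_\nu(x^1) \big) u = 0, \qquad q_\nu(x^1) := \nu^2 s_{13}(x^1) - \phi_1(x^1),
$$
as a perturbation of $-u'' + \mu^2 s_{12} u = 0$. Since $s_{12}$ is smooth and, by Proposition \ref{StackelForm}, can be taken strictly positive on $[0,A]$, the Liouville transformation $t = \int_0^{x^1}\sqrt{s_{12}(\sigma)}\,d\sigma$ reduces the leading part to $-\ddot{v} + \mu^2 v = 0$ (up to a bounded, $\mu$-independent potential coming from the change of variable), whose fundamental solutions of sine/cosine type at the endpoints grow like $e^{|\Re(\mu)| \, t}$, i.e. like $e^{\bar{A}|\Re(\mu)|}$ with $\bar{A} = \int_0^A \sqrt{s_{12}(\sigma)}\,d\sigma$. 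The first step, then, is to write $c_0, s_0, c_1, s_1$ as solutions of the associated Volterra integral equations and run the standard successive-approximation (Picard iteration) argument: the $k$-th iterate is bounded by $\frac{1}{k!}\big(\|q_\nu\|_{L^1}\big)^k e^{\bar{A}|\Re(\mu)|}$ times a constant, so summing gives a bound $C(\nu) e^{\bar{A}|\Re(\mu)|}$ where $C(\nu)$ absorbs $\exp(\|q_\nu\|_{L^1})$ and hence depends on $\nu$ but not on $\mu$. This yields the estimate for $c_j, s_j$, hence for their Wronskians $\Delta = W(s_0,s_1)$ and $D = W(c_0,s_1)$, hence for $F$ by (\ref{F}) (noting that the product of two $O(e^{\bar{A}|\Re(\mu)|})$ terms is still $O(e^{\bar{A}|\Re(\mu)|})$ up to adjusting $\bar A$; more carefully the Wronskian gains no extra exponential growth because of the cancellation, but even the crude product bound suffices here). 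The second item of the lemma is proved identically with the roles of $\mu$ and $\nu$ interchanged, using $s_{13} > 0$ and $\bar{B} = \int_0^A \sqrt{s_{13}(\sigma)}\,d\sigma$.

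The key technical step is thus the exponential bound on Volterra iterates, which I would state as a self-contained sublemma: if $u$ solves $-u'' + (\lambda^2 a(x) + b(x))u = 0$ on $[0,A]$ with $a \geq a_0 > 0$ smooth and $b \in L^1$, and $u$ satisfies Cauchy data of size $O(1)$ at an endpoint, then $|u(x)| \le C \exp\big(|\Re(\lambda)| \int_0^A \sqrt{a}\big)$ uniformly in $x$, with $C$ depending on $\|b\|_{L^1}$ and $a_0$ but not on $\lambda$. This is where essentially all the work sits; everything else is bookkeeping. A small subtlety worth flagging: after the Liouville transformation the "$b$" term acquires a contribution $\tfrac{1}{4}(a'/a)^2 - \tfrac12(a'/a)'$ type terms (the usual Liouville potential), which are smooth and bounded because $a = s_{12}$ (resp. $s_{13}$) is smooth and bounded below, so they are harmless and are absorbed into the $L^1$ norm. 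Alternatively — and this is probably cleaner to write — one can avoid the Liouville transformation altogether and run the integral-equation comparison directly against the explicit solutions of $-u'' + \mu^2 s_{12} u = 0$; but those are no longer elementary, so I would favour the Liouville route, or simply a Gronwall-type estimate on $|u(x)|^2 + |u'(x)|^2 / (1+|\mu|^2)$.

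The main obstacle — really the only one — is getting the exponential rate right, i.e. ensuring the exponent is $\bar A |\Re(\mu)|$ with the "optimal" $\bar A = \int_0^A \sqrt{s_{12}}$ rather than something like $|\mu|$ (which would be useless for the later Phragm\'en–Lindel\"of/Berndtsson argument, since that requires boundedness of $F(\mu,\nu)e^{-\bar A|\Re(\mu)| - \bar B|\Re(\nu)|}$ on a tube). Obtaining the sharp $|\Re(\mu)|$ dependence, rather than $|\mu|$, requires keeping track of real versus imaginary parts of $\mu$ in the iteration: the relevant bound is $|\cos(\mu t)|, |\sin(\mu t)/\mu| \le C e^{|\Im(\mu t)|}$ — wait, more precisely, writing $\mu$ for the spectral variable and noting that the homogeneous solutions of $-v'' + \mu^2 v = 0$ are $\cosh(\mu t), \sinh(\mu t)/\mu$ which are bounded by $e^{|\Re(\mu)| t}$ — so the free propagator is already $O(e^{\bar A |\Re(\mu)|})$ and the Picard series, being a sum of convolution powers of an $L^1$ kernel against this propagator, inherits exactly the same exponential rate with only polynomial-in-nothing (in fact constant) prefactor growth. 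Once the sublemma is in hand, the proof of Lemma \ref{Estimatemunu} is immediate, and Proposition \ref{EstF} then follows by the Hartogs-type / one-variable-at-a-time promotion carried out in the next step of the paper.
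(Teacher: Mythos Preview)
Your proposal is correct and follows essentially the same route as the paper: the authors perform exactly the Liouville change of variable $u^1=\int_0^{x^1}\sqrt{s_{12}}\,dt$ (resp.\ $v^1=\int_0^{x^1}\sqrt{s_{13}}\,dt$) that you suggest, obtaining the Schr\"odinger normal form $-\ddot U+q_\nu U=-\mu^2 U$, and then invoke the classical P\"oschel--Trubowitz estimates on the fundamental solutions---which are precisely the Picard/Volterra bounds you describe---to get $|\Delta|,|D|\le C(\nu)e^{\bar A|\Re(\mu)|}$ with $\bar A=\int_0^A\sqrt{s_{12}}$. The only cosmetic difference is that the paper cites \cite{PT1987} for the ready-made estimates rather than writing out the iteration; your identification of the Liouville potential contribution and of the sharp $|\Re(\mu)|$ (not $|\mu|$) exponent is exactly what the paper uses.
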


In order to prove this Lemma, we need to recast the separated ODE (\ref{Sepa}) and (\ref{Sepb}) into \emph{normal forms}, that is Schr\"odinger equations with spectral parameters $-\mu^2$ or $-\nu^2$. For instance, if we choose $-\mu^2$  as spectral parameter, we introduce the new radial coordinate
\begin{equation} \label{u1} 
  u^1 = \int_0^{x^1} \sqrt{s_{12}(t)} dt \ \ \in \ \ [0,\bar{A}],
\end{equation}
and remark that, if $u(x^1,\mu^2,\nu^2)$ is a solution of the separated ODE
$$
  -u'' + [\,\mu^2 s_{12}(x^1) + \nu^2 s_{13}(x^1) - \phi_1(x^1)] \, u = 0,
$$
then the function $U(u^1,\mu^2,\nu^2) := (s_{12}(x^1(u^1)))^{\frac{1}{4}} u(x^1(u^1), \mu^2, \nu^2)$ is a solution of the ODE
\begin{equation} \label{EqU}
  -\ddot{U} + q_\nu(u^1) U = -\mu^2 U,
\end{equation}
where
\begin{equation} \label{qnu-psi1}
  q_\nu = \nu^2 \bar{s}_{13} - \bar{\phi}_1, \quad \bar{s}_{13} = \frac{s_{13}}{s_{12}}, \quad \bar{\phi}_1  = \frac{\phi_1}{s_{12}} - \frac{(\dot{\log s_{12}})^2}{16} + \frac{(\ddot{\log s_{12}})}{4}.
\end{equation}
Here, the notation $\,\dot{}\,$ denotes the derivative with respect to $u^1$. For fixed $\nu \in \C$, (\ref{EqU}) is now a classical Schr\"odinger equation with $-\mu^2$ as spectral parameter. 

Introduce the FSS $\{U_0,V_0\}$ and $\{U_1,V_1\}$ of (\ref{EqU}) defined by the Cauchy conditions
\begin{eqnarray*}
  U_0(0) = 1, \ \dot{U}_0(0) = 0, \quad V_0(0) = 0, \ \dot{V}_0(0) = 1, \\
	U_1(\bar{A}) = 1, \ \dot{U}_1(\bar{A}) = 0, \quad V_1(\bar{A}) = 0, \ \dot{V}_1(\bar{A}) = 1, 
\end{eqnarray*}
as well as the characteristic and WT functions 
\begin{equation} \label{NewWT}
  \Delta_{q_\nu}(\mu^2) = W(V_0,V_1), \quad D_{q_\nu}(\mu^2) = W(U_0,V_1), \quad M_{q_\nu}(\mu^2) = -\frac{D_{q_\nu}(\mu^2)}{\Delta_{q_\nu}(\mu^2)},
\end{equation}
where $W(f,g) := f \dot{g} - \dot{f} g$ is the Wronskian. Introduce also the FSS $\{C_0,S_0\}$ and $\{C_1,C_1\}$ of (\ref{EqU}) defined by
\begin{equation} \label{CS}
  \begin{array}{cc} C_j(u^1,\mu^2,\nu^2) : = (s_{12}(x^1(u^1)))^{\frac{1}{4}} c_j(x^1(u^1), \mu^2,\nu^2), & j=0,1, \\
	  S_j(u^1,\mu^2,\nu^2) : = (s_{12}(x^1(u^1)))^{\frac{1}{4}} s_j(x^1(u^1), \mu^2,\nu^2), & j=0,1.
		\end{array}
\end{equation}
Then a straighforward though tedious calculation shows that
\begin{equation} \label{UCVS0}
  C_0 = (s_{12}(0))^{\frac{1}{4}} U_0 + \left( \frac{s'_{12}(0)}{4 (s_{12}(0))^{\frac{5}{4}}} \right) V_0, \quad S_0 = \frac{V_0}{(s_{12}(0))^{\frac{1}{4}}}, 
\end{equation}
and
\begin{equation} \label{UCVS1}
  C_1 = (s_{12}(A))^{\frac{1}{4}} U_1 + \left( \frac{s'_{12}(A)}{4 (s_{12}(A))^{\frac{5}{4}}} \right) V_1, \quad S_1 = \frac{V_A}{(s_{12}(A))^{\frac{1}{4}}}.  
\end{equation}
From (\ref{UCVS0}) and (\ref{UCVS1}), we also obtain
\begin{equation} \label{LinkDelta}
  \Delta(\mu^2,\nu^2) = \frac{1}{(s_{12}(0)\,s_{12}(A))^{\frac{1}{4}}} \Delta_{q_\nu}(\mu^2), 
\end{equation}
\begin{equation} \label{LinkD}	
	D(\mu^2,\nu^2) = \left( \frac{s_{12}(0)}{s_{12}(A)} \right)^{\frac{1}{4}} D_{q_\nu}(\mu^2) + \frac{s'_{12}(0)}{4 (s_{12}(0))^{\frac{5}{4}} (s_{12}(A))^{\frac{1}{4}}} \Delta_{q_\nu}(\mu^2),
\end{equation}
and 
\begin{equation} \label{LinkM}
  M(\mu^2,\nu^2) = \frac{1}{4} (\log s_{12})'(0) + \sqrt{s_{12}(0)} \,M_{q_\nu}(\mu^2).  
\end{equation}

The interest in introducing such a normal form comes from the existence of universal asymptotics as $|\mu| \to \infty$ for the functions $U_j, \, V_j, \, j=0,1$ (see \cite{PT1987}, Theorem 3, p.13). Precisely we have for fixed $\nu \in \C$ 
\begin{equation} \label{UnivEst0}
  \left\{ \begin{array}{rcl} 
	U_0(u^1,\mu^2,\nu^2) & = & \cosh(\mu u^1) + O\left( \frac{1}{|\mu|} e^{|\Re(\mu)|u^1 + \| q_\nu \| \sqrt{u^1}} \right), \\
	\dot{U}_0(u^1,\mu^2,\nu^2) & = & -\mu \sinh(\mu u^1) + O\left( \| q_\nu \| e^{|\Re(\mu)|u^1 + \| q_\nu \| \sqrt{u^1}} \right), \\
	V_0(u^1,\mu^2,\nu^2) & = & \frac{\sinh(\mu u^1)}{\mu} + O\left( \frac{1}{|\mu|^2} e^{|\Re(\mu)|u^1 + \| q_\nu \| \sqrt{u^1}} \right), \\
  \dot{V}_0(u^1,\mu^2,\nu^2) & = & \cosh(\mu u^1) + O\left( \frac{\| q_\nu \|}{|\mu|} e^{|\Re(\mu)|u^1 + \| q_\nu \| \sqrt{u^1}} \right),
	\end{array} \right. \quad |\mu| \to \infty,
\end{equation}
and 
\begin{equation} \label{UnivEst1}
  \left\{ \begin{array}{rcl} 
	U_1(u^1,\mu^2,\nu^2) & = & \cosh(\mu (\bar{A}-u^1)) + O\left( \frac{1}{|\mu|} e^{|\Re(\mu)|(\bar{A}-u^1) + \| q_\nu \| \sqrt{\bar{A}-u^1}} \right), \\
	\dot{U}_1(u^1,\mu^2,\nu^2) & = & -\mu \sinh(\mu (\bar{A}-u^1)) + O\left( \| q_\nu \| e^{|\Re(\mu)| (\bar{A}-u^1) + \| q_\nu \| \sqrt{\bar{A}-u^1}} \right), \\
	V_1(u^1,\mu^2,\nu^2) & = & -\frac{\sinh(\mu (\bar{A}-u^1))}{\mu} + O\left( \frac{1}{|\mu|^2} e^{|\Re(\mu)| (\bar{A}-u^1) + \| q_\nu \| \sqrt{\bar{A}-u^1}} \right), \\
  \dot{V}_1(u^1,\mu^2,\nu^2) & = & \cosh(\mu (\bar{A}-u^1)) + O\left( \frac{\| q_\nu \|}{|\mu|} e^{|\Re(\mu)| (\bar{A}-u^1) + \| q_\nu \| \sqrt{\bar{A}-u^1}} \right),
	\end{array} \right. \quad |\mu| \to \infty.
\end{equation}
From these universal estimates and the definition of $\Delta_{q_\nu}(\mu^2), \, D_{q_\nu}(\mu^2)$ given by (\ref{NewWT}), we obtain that for each fixed $\nu \in \C$, there exists a constant $C(\nu)$ such that 
\begin{equation} \label{Deltamu}
  \Delta_{q_\nu}(\mu^2) = \frac{\sinh(\bar{A} \mu )}{\mu} + O\left( C(\nu) \frac{e^{|\Re(\mu)| \bar{A}}}{|\mu|^2} \right), \quad |\mu| \to \infty,
\end{equation}
and
\begin{equation} \label{Dmu}
  D_{q_\nu}(\mu^2) = \cosh(\bar{A} \mu ) + O\left( C(\nu) \frac{e^{|\Re(\mu)| \bar{A}}}{|\mu|} \right), \quad |\mu| \to \infty.
\end{equation}
We can now give the proof of Lemma \ref{Estimatemunu}. 

\begin{proof}[Proof of Lemma \ref{Estimatemunu}]
The proof of 1. follows directly from the estimates (\ref{Deltamu}) and (\ref{Dmu}) together with (\ref{F}), (\ref{LinkDelta}) and (\ref{LinkD}). The proof of 2. is similar to 1. inverting the role \footnote{For this, it suffices to use the new coordinate $$v^1 = \ds\int_0^{x^1} \sqrt{s_{13}(t)} dt \ \ \in \ \ [0,\bar{B}],$$ and remark that, if $u(x^1,\mu^2,\nu^2)$ is a solution of the separated ODE $-u'' + [\,\mu^2 s_{12}(x^1) + \nu^2 s_{13}(x^1) - \phi_1(x^1)] \, u = 0$, then the function $V(v^1,\mu^2,\nu^2) := (s_{13}(x^1(v^1)))^{\frac{1}{4}} u(x^1(v^1), \mu^2, \nu^2)$ is a solution of the ODE
$$
  -\ddot{V} + q_\mu(v^1) V = -\nu^2 V, \quad q_\mu = \mu^2 \frac{s_{12}}{s_{13}} - \frac{\phi_1}{s_{13}} + \frac{(\dot{\log s_{13}})^2}{16} - \frac{(\ddot{\log s_{13}})}{4}.
$$}
of the spectral parameters $\mu^2$ and $\nu^2$. We leave the details to the readers. 
\end{proof}

\vspace{0.5cm}
\noindent \textit{Step 2}. Second we need a uniform estimate when $(\mu, \nu) = (iy,iy') \in (i\R)^2$. 

\begin{lemma} \label{EstimateiR}
There exists a constant $C>0$ such that for all $(y, y') \in \R^2$
$$
  |D(-y^2, -y'^2)|, \, |\Delta(-y^2, -y'^2)|, \,|F(iy,iy')| \ \leq \ C. 
$$	
\end{lemma}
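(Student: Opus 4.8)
The plan is to exploit the fact that on the imaginary axes the radial equation (\ref{Sepa}) becomes oscillatory, but with a leading coefficient that dominates its own derivative uniformly in the parameters; a suitable energy functional then delivers the bound, and everything reduces to controlling a single solution at one endpoint.

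First I would observe that it suffices to bound a single solution. For $(\mu,\nu) = (iy,iy')$ the equation (\ref{Sepa}) reads $u'' = -p\,u$ on $[0,A]$, where $p = p_{y,y'} := y^2 s_{12}(x^1) + y'^2 s_{13}(x^1) + \phi_1(x^1)$. By Proposition \ref{StackelForm} we may assume $s_{12}, s_{13} > 0$ on $[0,A]$; let $c_{*}>0$ be a common positive lower bound for $s_{12}$ and $s_{13}$ on $[0,A]$, so that $p \geq c_{*}(y^2+y'^2) - \sup_{[0,A]}|\phi_1|$. Since the Wronskian $W(f,g)=fg'-f'g$ of two solutions of a Schr\"odinger equation is constant on $[0,A]$, evaluating $\Delta = W(s_0,s_1)$ and $D = W(c_0,s_1)$ at $x^1 = 0$ gives
\[
  \Delta(-y^2,-y'^2) = -\,s_1(0), \qquad D(-y^2,-y'^2) = s_1'(0),
\]
where $s_1$ denotes the (real) solution of $u'' = -p\,u$ with $s_1(A)=0$ and $s_1'(A)=1$. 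Hence it is enough to bound $|s_1(0)|$ and $|s_1'(0)|$ by a constant independent of $(y,y')$.

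Next I would run an energy estimate. Put $E(x) := s_1'(x)^2 + p(x)\,s_1(x)^2 \geq 0$ on $[0,A]$; then $E(A) = 1$ and $E'(x) = p'(x)\,s_1(x)^2$. Choose $R>0$ so large that $y^2+y'^2 \geq R^2$ forces $p(x) \geq \tfrac{c_{*}}{2}(y^2+y'^2)$ for all $x^1\in[0,A]$. Since $|p'(x)| \leq (y^2+y'^2)\,\max\big(\sup_{[0,A]}|s_{12}'|,\ \sup_{[0,A]}|s_{13}'|\big) + \sup_{[0,A]}|\phi_1'|$, there is a constant $C'$, independent of $(y,y')$ with $y^2+y'^2 \geq R^2$, such that $|p'(x)| \leq C'\,p(x)$ on $[0,A]$. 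Therefore $|E'(x)| \leq C'\,p(x)\,s_1(x)^2 \leq C'\,E(x)$, and Gronwall's inequality applied on $[0,A]$, starting from the endpoint $x^1=A$, gives $E(0) \leq E(A)\,e^{C'A} = e^{C'A}$. Consequently $|s_1'(0)|^2 \leq E(0) \leq e^{C'A}$ and $p(0)\,s_1(0)^2 \leq E(0) \leq e^{C'A}$, whence $|s_1(0)| \leq \big(2/(c_{*}R^2)\big)^{1/2}\,e^{C'A/2}$. Both bounds are uniform in the region $y^2+y'^2 \geq R^2$, so $|\Delta(-y^2,-y'^2)|$ and $|D(-y^2,-y'^2)|$ are bounded there.

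Finally, on the complementary compact set $\{y^2+y'^2 \leq R^2\}$ the functions $(y,y') \mapsto \Delta(-y^2,-y'^2)$ and $(y,y') \mapsto D(-y^2,-y'^2)$ are continuous (indeed $\Delta$ and $D$ are entire in $(\mu,\nu)$), hence bounded. Combining the two regions bounds $|D(-y^2,-y'^2)|$ and $|\Delta(-y^2,-y'^2)|$ on all of $\R^2$, and then $|F(iy,iy')| \leq |D(-y^2,-y'^2)|\,|\tilde{\Delta}(-y^2,-y'^2)| + |\tilde{D}(-y^2,-y'^2)|\,|\Delta(-y^2,-y'^2)|$ is bounded as well. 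The one point requiring care is the uniform domination $|p'(x)| \leq C'\,p(x)$, which is precisely what the uniform positive lower bound on $s_{12}$ and $s_{13}$ from Proposition \ref{StackelForm} supplies; note also that in this oscillatory regime one should not expect $s_1'(0)$ to decay, only to remain bounded, and the energy $E$ is the natural object capturing this.
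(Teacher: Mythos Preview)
Your proof is correct and takes a genuinely different route from the paper's. The paper performs a Liouville change of variable $w^1 = \int_0^{x^1}\sqrt{r_{yy'}(t)}\,dt$ with $r_{yy'} = (y^2 s_{12}+y'^2 s_{13})/(y^2+y'^2)$, which reduces (\ref{Sepa}) to a Schr\"odinger equation $-\ddot W + q_{yy'}W = \omega^2 W$ with $\omega^2 = y^2+y'^2$ and a potential $q_{yy'}$ that is uniformly bounded in $(y,y')$; then the standard P\"oschel--Trubowitz asymptotics for $\cos$ and $\sin$-type solutions yield the result, together with explicit leading terms $\Delta \sim \sin(\bar C_{yy'}\omega)/\omega$ and $D \sim \cos(\bar C_{yy'}\omega)$. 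Your argument instead stays in the original variable, evaluates the constant Wronskians at $x^1=0$ to reduce everything to $s_1(0)$ and $s_1'(0)$, and controls these by the energy $E = (s_1')^2 + p\,s_1^2$ via $|E'| = |p'|\,s_1^2 \le C'\,p\,s_1^2 \le C' E$ and Gronwall. The key inequality $|p'|\le C' p$ is exactly where the uniform positivity of $s_{12},s_{13}$ from Proposition~\ref{StackelForm} enters, and you identify this clearly. Your approach is more elementary---no transformation, no external asymptotic lemma---and in fact also yields $|\Delta(-y^2,-y'^2)| = |s_1(0)| \lesssim (y^2+y'^2)^{-1/2}$, hence the same $|F(iy,iy')| = O(1/\omega)$ decay the paper records in (\ref{f4}), even though you only claim boundedness. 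The paper's route, on the other hand, gives the precise oscillatory leading behaviour, which is informative but not needed for the lemma as stated.
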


\begin{proof}
When $(\mu, \nu) = (iy,iy') \in (i\R)^2$, the separated ODE (\ref{Sepa}) takes the form
\begin{equation} \label{Sepc}
  -u'' - \phi_1(x^1) u = \omega^2 r_{yy'}(x^1) u,  
\end{equation}
where
\begin{equation} \label{omegar}
  \omega^2 = y^2 + y'^2, \quad r_{yy'}(x^1) = \frac{y^2 s_{12}(x^1)  +y'^2 s_{13}(x^1)}{y^2 + y'^2}. 
\end{equation}
We introduce the change of variable (which is dependent on $y, y'$!)
$$
  w^1 = w^1_{yy'} = \int_0^{x^1} \sqrt{r_{yy'}(t)} dt \ \ \in \ \ [0,\bar{C}_{yy'}],
$$ 
and remark that, if $u(x^1,\mu^2,\nu^2)$ is a solution of the separated ODE
$$
  -u'' + [\,\mu^2 s_{12}(x^1) + \nu^2 s_{13}(x^1) - \phi_1(x^1)] \, u = 0,
$$
then the function $W(w^1,-y^2,-y'^2) := (r_{yy'}(x^1(w^1)))^{\frac{1}{4}} W(x^1(w^1), -y^2, -y'^2)$ is a solution of the ODE
\begin{equation} \label{EqW}
  -\ddot{W} + q_{yy'}(w^1) W = \omega^2 W,
\end{equation}
where
\begin{equation} \label{qyy'}
  q_{yy'} = -\frac{\phi_1}{r_{yy'}} + \frac{(\dot{\log r_{yy'}})^2}{16} - \frac{(\ddot{\log r_{yy'}})}{4}.
\end{equation}
We also observe the following properties related to this change of variable. Since $s_{12}, \, s_{13}$ are positive and $C^\infty$ on $[0,A]$, we deduce first that there exist positive constants $c,C > 0$ such that
$$
  c \leq r_{yy'} \leq C, 
$$
and second that the potential $q_{yy'}$ is uniformly bounded with respect to $(y, y') \in \R^2$. Finally, the variable $w^1$ lives on the $(y,y')$-dependent interval $[0, \bar{C} = \bar{C}_{yy'}]$ whose length satisfies the uniform estimate
$$
  \sqrt{c} A \leq \bar{C}_{yy'} = \int_0^{A} \sqrt{r_{yy'}(t)} dt \leq \sqrt{C} A. 
$$
Similarly to what we did previously, we introduce the FSS $\{W_0,X_0\}$ and $\{W_1,X_1\}$ of (\ref{EqW}) defined by the Cauchy conditions
\begin{eqnarray*}
  W_0(0) = 1, \ \dot{W}_0(0) = 0, \quad X_0(0) = 0, \ \dot{X}_0(0) = 1, \\
	W_1(\bar{C}) = 1, \ \dot{W}_1(\bar{C}) = 0, \quad X_1(\bar{C}) = 0, \ \dot{X}_1(\bar{C}) = 1, 
\end{eqnarray*}
as well as the characteristic functions 
\begin{equation} \label{NewChar}
  \Delta_{q_{yy'}}(\omega^2) = W(X_0,X_1), \quad D_{q_{yy'}}(\omega^2) = W(W_0,X_1). 
\end{equation}
These new characteristic functions are related to the initial ones by the formulas
\begin{equation} \label{NewLinkDelta}
  \Delta(-y^2, -y'^2) = \frac{1}{(r_{yy'}(0)\,r_{yy'}(A))^{\frac{1}{4}}} \Delta_{q_{yy'}}(\omega^2), 
\end{equation}
\begin{equation} \label{NewLinkD}	
	D(-y^2, -y'^2) = \left( \frac{r_{yy'}(0)}{r_{yy'}(A)} \right)^{\frac{1}{4}} D_{q_{yy'}}(\omega^2) + \frac{r'_{yy'}(0)}{4 (r_{yy'}(0))^{\frac{5}{4}} (r_{yy'}(A))^{\frac{1}{4}}} \Delta_{q_{yy'}}(\omega^2).
\end{equation}
It is thus enough to show that the new characteristic functions are uniformly bounded on $(y, y') \in \R^2$. This can be done once again using the universal estimates from \cite{PT1987}. Precisely, since $\Im(\omega) = 0$, we have
\begin{equation} \label{NewUnivEst0}
  \left\{ \begin{array}{rcl} 
	W_0(w^1,-y^2,-y'^2) & = & \cos(\omega w^1) + O\left( \frac{1}{|\omega|} e^{\| q_{yy'} \| \sqrt{w^1}} \right), \\
	\dot{W}_0(w^1,-y^2,-y'^2) & = & - \omega \sin(\omega w^1) + O\left( \| q_{yy'} \| e^{\| q_{yy'} \| \sqrt{w^1}} \right), \\
	X_0(w^1,-y^2,-y'^2) & = & \frac{\sin(\omega w^1)}{\omega} + O\left( \frac{1}{|\omega|^2} e^{\| q_{yy'} \| \sqrt{w^1}} \right), \\
  \dot{X}_0(w^1,-y^2,-y'^2) & = & \cos(\omega w^1) + O\left( \frac{\| q_{yy'} \|}{|\omega|} e^{\| q_{yy'} \| \sqrt{w^1}} \right),
	\end{array} \right. \quad |\omega| \to \infty,
\end{equation}
and 
\begin{equation} \label{NewUnivEst1}
  \left\{ \begin{array}{rcl} 
	W_1(w^1,-y^2,-y'^2) & = & \cos(\omega (\bar{C}-w^1)) + O\left( \frac{1}{|\omega|} e^{\| q_{yy'} \| \sqrt{\bar{C}-w^1}} \right), \\
	\dot{W}_1(w^1,-y^2,-y'^2) & = & -\omega \sin(\omega (\bar{C}-w^1)) + O\left( \| q_{yy'} \| e^{\| q_{yy'} \| \sqrt{\bar{C}-w^1}} \right), \\
	X_1(w^1,-y^2,-y'^2) & = & -\frac{\sin(\omega (\bar{C}-w^1))}{\omega} + O\left( \frac{1}{|\omega|^2} e^{\| q_{yy'} \| \sqrt{\bar{C}-w^1}} \right), \\
  \dot{X}_1(w^1,-y^2,-y'^2) & = & \cos(\omega (\bar{C}-w^1)) + O\left( \frac{\| q_{yy'} \|}{|\omega|} e^{\| q_{yy'} \| \sqrt{\bar{C}-w^1}} \right),
	\end{array} \right. \quad |\omega| \to \infty.
\end{equation}
Therefore we obtain (since the potentials $q_{yy'}$ and $\bar{C}_{yy'}$ are uniformly bounded with respect to $(y, y') \in \R^2$)
\begin{equation} \label{Deltaomega}
  \Delta_{q_{yy'}}(\omega^2) = \frac{\sin(\bar{C}_{yy'} \omega )}{\omega} + O\left(\frac{1}{|\omega|^2} \right), \quad |\omega| \to \infty,
\end{equation}
and
\begin{equation} \label{Domega}
  D_{q_{yy'}}(\omega^2) = \cos(\bar{C}_{yy'} \omega ) + O\left( \frac{1}{|\omega|} \right), \quad |\omega| \to \infty.
\end{equation}
Finally, we deduce from (\ref{F}), (\ref{NewLinkDelta}), (\ref{NewLinkD}), (\ref{Deltaomega}) and (\ref{Domega}) that there exists a constant $C>0$ such that
\begin{equation} \label{f4}
  |F(iy,iy')| \leq \frac{C}{|\omega|}, \quad |\omega| \to \infty.
\end{equation}
The claim of the Lemma follows from (\ref{f4}). 

\end{proof}

\vspace{0.5cm}
We can now finish the proof of Proposition \ref{EstF} by applying twice the Phragmen-Lindel\"of principle. 

\begin{proof}[Proof of Proposition \ref{EstF}]
First we fix $\nu \in i\R$. According to Lemmas \ref{Estimatemunu} and \ref{EstimateiR}, the analytic function $\mu \longrightarrow F(\mu,\nu)$ satisfies
$$
  \left\{ \begin{array}{lc}
	|F(\mu,\nu)| \leq C(\nu) e^{\bar{A}|\Re(\mu)|}, & \forall \mu \in \C, \\
	|F(\mu,\nu)| \leq C, & \forall \mu \in i\R. 
	\end{array} \right. 
$$
Hence the Phragmen-Lindel\"of principle (see for instance \cite{Lev1996}, Lecture 6., Theorem 3) yields
\begin{equation} \label{f5}
  |F(\mu,\nu)| \leq C e^{\bar{A}|\Re(\mu)|}, \quad \forall (\mu, \nu) \in (\C,i\R). 
\end{equation}
Second we fix $\mu \in \C$. Then, according to Lemma \ref{Estimatemunu} and (\ref{f5}), the analytic function $\nu \longrightarrow F(\mu,\nu)$ satisfies
$$
  \left\{ \begin{array}{lc}
	|F(\mu,\nu)| \leq C(\mu) e^{\bar{B}|\Re(\nu)|}, & \forall \nu \in \C, \\
	|F(\mu,\nu)| \leq C e^{\bar{A}|\Re(\mu)|}, & \forall \nu \in i\R. 
	\end{array} \right. 
$$
Applying once again the Phragmen-Lindel\"of principle, we obtain
$$
  |F(\mu,\nu)| \leq C e^{\bar{A}|\Re(\mu)| + \bar{B}|\Re(\nu)|}, \quad \forall (\mu, \nu) \in \C^2, 
$$
which proves the Proposition. 
\end{proof}

\vspace{0.5cm}
We can now apply Theorem \ref{Be}. First define the analytic function
$$
  f(\mu,\nu) := F(\mu,\nu) e^{-\bar{A} \mu - \bar{B}\nu},
$$
where $\bar{A}$ and $\bar{B}$ are the positive constants appearing in Proposition \ref{EstF}. Then it is clear from Proposition \ref{EstF} that $f$ is bounded and analytic on the set 
$$
  T((\R^+)^2) = \{ (\mu,\nu) \in \C^2 \ \mid  \ \Re(\mu,\nu) \in (\R^+)^2 \}.
$$		
Second define the cone
\begin{equation} \label{Cone}
  \mathcal{C}_\epsilon = \{ (\mu, \theta \mu) \in (\R^+)^2 \ / \ \mu \in \R^+, \ \sqrt{c_1 + \e} \leq \theta \leq \sqrt{c_2 - \e} \}, \quad 0 < \e << 1, 
\end{equation}
where
\begin{equation} \label{c1c2}
  c_1 = \max \left( -\frac{s_{32}}{s_{33}} \right), \quad c_2 = \min \left( -\frac{s_{22}}{s_{23}} \right).
\end{equation}

\begin{rem}
The fact that $c_1 < c_2$ is ensured by Proposition \ref{StackelForm}. 
\end{rem}

Define also the discrete set 
\begin{equation} \label{E}
  E_M = \{ (\mu_m,\nu_m) \in (\R^+)^2 \ / \ m \geq M \},
\end{equation}
where $M$ is chosen large enough to ensure \footnote{It has been shown by Gobin \cite{Go2018}, Lemma 2.9., that there exist constants $C_1, C_2, D_1, D_2$ such that for all $m \geq 1$
$$
  C_1 \mu_m^2 + D_1 \leq \nu_m^2 \leq C_2 \mu_m^2 + D_2,
$$
where 
$$
  C_1 = \min \left( -\frac{s_{32}}{s_{33}} \right) > 0, \quad C_2 = \max \left( -\frac{s_{22}}{s_{23}} \right) > 0.
$$
This result implies the asserted claim.} that for all $m \geq M$, the joint spectrum $(\mu^2_m,\nu^2_m)$ of the angular operators $(H,L)$ belongs to $(\R^+)^2$. In that case, $(\mu_m,\nu_m)$ simply denotes the positive square root of $(\mu^2_m,\nu^2_m)$.

We now recall the following results shown by Gobin \cite{Go2018}, Appendices B and C. 

\begin{lemma} \label{Go}
1. There exists $h > 0$ such that 
$$
  |e_1 - e_2| \geq h, \quad \forall (e_1,e_2) \in (E_m \cap \mathcal{C}_\e)^2. 
$$	
2. Set $N(r) = \#  (E_m \cap \mathcal{C}_e) \cap B(0,r)$. Then
$$
 \varlimsup_{r \to \infty} \frac{N(r)}{r^2} > 0. 
$$
\end{lemma}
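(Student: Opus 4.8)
We follow Gobin \cite{Go2018}, Appendices B and C; here we only indicate the structure of the argument. The plan is to reduce both assertions to a transfer-matrix (WKB) analysis of the two \emph{angular} separated equations (\ref{Separatedb}) and (\ref{Separatedc}), which are periodic problems on the circle. Since $Y_m = v_m(x^2) w_m(x^3)$ is a genuine eigenfunction on $\mathbb{T}^2$, the factors $v_m$ and $w_m$ are nonzero \emph{periodic} solutions of (\ref{Separatedb}) and (\ref{Separatedc}); writing $\mathcal{D}_2(\mu^2,\nu^2)$ and $\mathcal{D}_3(\mu^2,\nu^2)$ for the Hill discriminants (the traces of the monodromy matrices over one period) of these two equations, the joint spectrum of $(H,L)$ is therefore exactly the set
$$
  J \ = \ \{\,(\mu^2,\nu^2)\ :\ \mathcal{D}_2(\mu^2,\nu^2)=2, \ \mathcal{D}_3(\mu^2,\nu^2)=2\,\},
$$
and each of $\{\mathcal{D}_2=2\}$, $\{\mathcal{D}_3=2\}$ is a union of smooth curves (the branches of periodic eigenvalues), so $J$ is the intersection of two such families. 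On the cone $\mathcal{C}_\epsilon$, i.e. for $\nu^2/\mu^2\in[c_1+\epsilon,c_2-\epsilon]$, the potentials $\mu^2 s_{22}+\nu^2 s_{23}-\phi_2$ and $\mu^2 s_{32}+\nu^2 s_{33}-\phi_3$ are, for $|(\mu,\nu)|$ large, strictly negative on the whole circle; here the sign conditions $s_{22}<0<s_{23}$, $s_{32}>0>s_{33}$ of Proposition \ref{StackelForm} and the strict inequality $c_1=\max(-s_{32}/s_{33})<\min(-s_{22}/s_{23})=c_2$ enter decisively. Hence there are no turning points on $\mathcal{C}_\epsilon$, and the universal asymptotics of \cite{PT1987} already used in the proof of Lemma \ref{Estimatemunu} give, uniformly on $\mathcal{C}_\epsilon\cap\{|(\mu,\nu)|>R_0\}$,
$$
  \mathcal{D}_2(\mu^2,\nu^2)=2\cos(\mathcal{A}_2(\mu,\nu))+O(|\mu|^{-1}), \qquad \mathcal{D}_3(\mu^2,\nu^2)=2\cos(\mathcal{A}_3(\mu,\nu))+O(|\mu|^{-1}),
$$
where $\mathcal{A}_2(\mu,\nu)=\oint\sqrt{-(\mu^2 s_{22}+\nu^2 s_{23})}\,dx^2$ and $\mathcal{A}_3(\mu,\nu)=\oint\sqrt{-(\mu^2 s_{32}+\nu^2 s_{33})}\,dx^3$ are smooth on the open cone and positively homogeneous of degree one.

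Next I would analyse the action map $\Xi:(\mu,\nu)\mapsto(\mathcal{A}_2(\mu,\nu),\mathcal{A}_3(\mu,\nu))$. With $\Phi_j(\theta):=\mathcal{A}_j(1,\theta)$, the function $\Phi_2$ is strictly decreasing and $\Phi_3$ strictly increasing on $(\sqrt{c_1},\sqrt{c_2})$ — again because $s_{23}>0$ and $s_{33}<0$ — so the Jacobian of $\Xi$, which equals $\Phi_2\Phi_3'-\Phi_2'\Phi_3=-\Phi_3^2(\Phi_2/\Phi_3)'>0$, is bounded above and bounded away from $0$ for $\theta$ in the compact interval $[\sqrt{c_1+\epsilon},\sqrt{c_2-\epsilon}]$. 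Thus $\Xi$ is a bi-Lipschitz diffeomorphism of $\mathcal{C}_\epsilon\cap\{|(\mu,\nu)|>R_0\}$ onto a two-dimensional cone. The conditions $\mathcal{D}_2=2$ and $\mathcal{D}_3=2$ force $(\mathcal{A}_2,\mathcal{A}_3)$ to lie within $o(1)$ of the lattice $2\pi\Z^2$, and the curves $\{\mathcal{D}_2=2\}$ and $\{\mathcal{D}_3=2\}$ are, with their tangents, close to the two transversal families $\Xi^{-1}(\{\mathcal{A}_2\in2\pi\Z\})$ and $\Xi^{-1}(\{\mathcal{A}_3\in2\pi\Z\})$. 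A quantitative implicit function theorem then yields that, in $\mathcal{C}_\epsilon$ and for $|(\mu,\nu)|$ large, each cell of the pulled-back grid contains exactly one point of $J$, lying within $o(1)$ of the corresponding $\Xi^{-1}(2\pi k,2\pi\ell)$, and conversely each such lattice point produces one point of $J$.

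Both claims then follow from this correspondence. For part 1: two distinct elements of $E_M\cap\mathcal{C}_\epsilon$ are $o(1)$-close to two distinct points of $\Xi^{-1}(2\pi\Z^2)$, whose mutual distance is at least $2\pi/\mathrm{Lip}(\Xi)>0$ ($\Xi$ being globally Lipschitz on the cone, by degree-one homogeneity); enlarging $M$ so that the $o(1)$ corrections are smaller than a fixed fraction of this gap then gives a uniform lower bound $h>0$. For part 2: up to an $O(r)$ boundary contribution, $N(r)$ equals the number of points of $2\pi\Z^2$ inside $\Xi(\mathcal{C}_\epsilon\cap B(0,r))$; since $\Xi$ is homogeneous of degree one with Jacobian bounded below, this set has area at least $\kappa_0 r^2$ and hence contains at least $\kappa r^2$ lattice points once $r$ is large, so $N(r)\geq\kappa r^2$ and $\varlimsup_{r\to\infty}N(r)/r^2\geq\kappa>0$. (This is consistent with the Weyl law for the elliptic operator $H$ on $\mathbb{T}^2$ together with the a priori localization $C_1\mu_m^2+D_1\leq\nu_m^2\leq C_2\mu_m^2+D_2$ recalled above.)

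The hard part is the uniformity in the first step: one must establish the transfer-matrix asymptotics, together with remainder bounds (including bounds on first derivatives), that are uniform over the entire cone $\mathcal{C}_\epsilon$, i.e. uniformly as the slope $\theta=\nu/\mu$ ranges over the compact interval $[\sqrt{c_1+\epsilon},\sqrt{c_2-\epsilon}]$, and then propagate these bounds through the implicit function theorem so that the displacement of each joint eigenvalue from its ideal lattice point is genuinely uniform in the quantum numbers $(k,\ell)$. Checking the non-degeneracy of the Jacobian of $\Xi$ on $\mathcal{C}_\epsilon$ is elementary but also required. All of this is carried out in detail in the appendices of \cite{Go2018}, to which we refer.
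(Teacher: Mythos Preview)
Your outline is correct and self-contained: the reduction to the two periodic Hill problems via $Y_m=v_m w_m$, the sign analysis showing that on $\mathcal{C}_\epsilon$ both potentials are strictly negative (hence no turning points), the uniform discriminant asymptotics $\mathcal{D}_j=2\cos\mathcal{A}_j+O(|\mu|^{-1})$, the non-degeneracy of the action map $\Xi$ (your monotonicity argument for $\Phi_2,\Phi_3$ is right, and the Jacobian, being degree-zero homogeneous, is indeed bounded above and below on the cone), and the resulting lattice picture all go through as you describe.

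The paper itself gives no proof of this lemma; it simply cites Gobin \cite{Go2018}. The one substantive difference is in the route to part~2: the Remarks following the lemma state explicitly that Gobin obtains the density estimate $\varlimsup N(r)/r^2>0$ by invoking Colin de Verdi\`ere's general results \cite{CdV1979,CdV1980} on the joint spectrum of commuting pseudo-differential operators (a Weyl-type density in cones, counted with multiplicity), together with the observation that multiplicities in $J$ are at most $4$. Your argument bypasses this machinery entirely: because the angular problem is genuinely a \emph{product} of two one-dimensional periodic problems, you can read off the density directly from the lattice count $\#(2\pi\Z^2\cap\Xi(\mathcal{C}_\epsilon\cap B(0,r)))$ via the bi-Lipschitz action map. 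This is more elementary and gives both parts of the lemma by the same mechanism, whereas the Colin de Verdi\`ere approach is a black box that applies to joint spectra of commuting elliptic operators without any separable structure. Either route is adequate here.
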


\begin{rems}
\textbf{1.} The proof of the second assertion in Lemma \ref{Go} is an application of the papers \cite{CdV1979, CdV1980} by Colin de Verdi\`ere in which the author studies basic properties of the joint spectrum of commuting pseudo-differential operators on manifolds (such as the density of the joint spectrum in certain cones which is needed here, or Bohr-Sommerfeld quantization formulas). 

\textbf{2.} In \cite{CdV1979}, the joint spectrum $J$ is counted with multiplicity and thus the density $N(r)$ estimated in Lemma \ref{Go} also counts this multiplicity. However, the discrete subset $E$ considered by Berndtsson in Theorem \ref{Be} does not count multiplicity and so for the density $n(r)$. Fortunately, we can show easily that each $(\mu_m^2,\nu_m^2)$ in the joint spectrum $J$ has at most multiplicity $4$ (see Gobin \cite{Go2018}, Remark 2.7). Hence the density $N(r)$ calculated with Colin de Verdi\`ere's work differs at most by a factor $4$ to the density $n(r)$ needed in Theorem \ref{Be}. In all cases, the density $n(r)$ remains of order $r^2$ and we can apply Berndtsson's Theorem \ref{Be}. 
\end{rems}

Hence applying Theorem \ref{Be} to the bounded and analytic function $f(\mu,\nu)$ on $T(\mathcal{C}_\e)$, we see that $f$ vanishes identically on $T(\mathcal{C}_\e)$ and thus on $\C^2$ by analytic continuation. Using the definition of $f$, we infer that the function $F(\mu,\nu)$ vanishes identically on $\C^2$ and by definition (\ref{F}) of $F$, this means that 
\begin{equation} \label{f6}
  M(\mu^2,\nu^2) = \tilde{M}(\mu^2,\nu^2), \quad \forall (\mu,\nu) \in \C^2 \backslash \{poles\}
\end{equation}

It remains now to extract the new information given by (\ref{f6}). Instead of working with the WT function $M(\mu^2,\nu^2)$ that depends on two variables, we prefer to work with the more classical WT function $M_{q_\nu}(\mu^2)$ defined in (\ref{NewWT}) that depends on one variable. Recalling the link between the two different WT functions given by (\ref{LinkM}) and using (\ref{s12}) - (\ref{s12'}), we infer from (\ref{f6}) that
\begin{equation} \label{f7}
  M_{q_\nu}(\mu^2) = \tilde{M}_{\tilde{q}_\nu}(\mu^2), \quad \forall (\mu,\nu) \in \C^2. 
\end{equation}
For each fixed $\nu \in \C$, we can apply the Borg-Marchenko Theorem (see for instance \cite{Be2001, Bo1946, Bo1952, GS2000, Te2014}) and obtain from (\ref{f7})
\begin{equation} \label{qnu=}
  \left\{ \begin{array}{rcl} 
	\bar{A} & = & \bar{\tilde{A}}, \\  
	q_\nu(u^1) & = & \tilde{q}_\nu(u^1), \quad \forall u^1 \in [0,\bar{A}], \ \forall \nu \in \C. 
  \end{array} \right. 
\end{equation}
Observe that working in the variable $u^1$, the cylinders $\Omega$ and $\tilde{\Omega}$ are exactly the same. Moreover, thanks to the definition (\ref{qnu-psi1}) of the potential $q_\nu$ and playing with two different values of $\nu \in \C$, we finally obtain
\begin{equation} \label{s13phi1=}
  \left\{ \begin{array}{rcl} 
	\bar{s}_{13}(u^1) & = & \bar{\tilde{s}}_{13}(u^1), \quad \forall u^1 \in [0,\bar{A}], \\  
	\bar{\phi}_1(u^1) & = & \bar{\tilde{\phi}}_1(u^1), \quad \forall u^1 \in [0,\bar{A}], 
  \end{array} \right. 
\end{equation}
where we recall that
$$
  \bar{s}_{13}(u^1) = \frac{s_{13}(x^1(u^1))}{s_{12}(x^1(u^1))}, \quad \bar{\phi}_1 = \frac{\phi_1}{s_{12}} - \frac{(\dot{\log s_{12}})^2}{16} + \frac{(\ddot{\log s_{12}})}{4}. 
$$

Let us pause a moment and see what information we have obtained exactly. It is easier to describe now our conformally St\"ackel manifolds $(M,G)$ and $(\tilde{M}, \tilde{G})$ in the unique coordinate system $(u^1,u^2,u^3)$ given by
\begin{equation} \label{U}
  u^1 = (\ref{u1}), \quad u^2 = x^2, \quad u^3 = x^3,
\end{equation}
and remember that the change of variable $u^1$ does not break the conformally St\"ackel structure of the metrics. In that case, we have shown (see (\ref{NewG}) - (\ref{NewPhii})) that
$$
  \Omega = \tilde{\Omega} = [0,\bar{A}] \times \mathbb{T}^2,
$$
and
\begin{equation} \label{g1}
  \bar{G} = \bar{c}^4 \bar{g}, \quad \bar{\tilde{G}} = \bar{\tilde{c}}^4 \bar{\tilde{g}}.  
\end{equation}
Here $\bar{g}, \, \bar{\tilde{g}}$ are St\"ackel metrics associated to the St\"ackel matrices 
\begin{equation} \label{g2}
  \bar{S}=\left(
\begin{matrix}
\bar{s}_{11}(u^{1})& 1 & \bar{s}_{13}(u^{1}) \\
s_{21}(u^{2})&s_{22}(u^{2}) & s_{23}(u^{2}) \\
s_{31}(u^{3})&s_{32}(u^{3})&s_{33}(u^{3})
\end{matrix}
\right)\,, 
\quad 
\bar{\tilde{S}}=\left(
\begin{matrix}
\bar{\tilde{s}}_{11}(u^{1})& 1 & \bar{s}_{13}(u^{1}) \\
\tilde{s}_{21}(u^{2})&s_{22}(u^{2}) & s_{23}(u^{2}) \\
\tilde{s}_{31}(u^{3})&s_{32}(u^{3})&s_{33}(u^{3})
\end{matrix}
\right)\,,
\end{equation}
thanks to (\ref{c8}) and (\ref{s13phi1=}). Moreover, the St\"ackel metric $\bar{g}$ has the local expression
\begin{equation} \label{g3}
  g = \sum_{i=1}^3 \bar{h}_i^2 (du^i)^2, \quad \bar{h}_i^2 = \frac{\det \bar{S}}{s_{12}}, \ i=1,2,3, 
\end{equation}
whereas the conformal factors $\bar{c},$ still satisfies the PDE (\ref{PDEc}) which has now the local expression
\begin{equation} \label{g4}
-\Delta_{\bar{g}} \bar{c} - \sum_{i=1}^3 \bar{h}_{i}^{2}\big( \bar{\phi}_{i} + \frac{1}{4} \bar{\gamma}_{i}^{2} - \frac{1}{2}\partial_{i} \bar{\gamma}_{i} \big) c = 0\,,
\end{equation}
with
\begin{equation} \label{g5}
  \bar{\phi}_1 = \frac{\phi_1}{s_{12}} - \frac{(\dot{\log s_{12}})^2}{16} + \frac{(\ddot{\log s_{12}})}{4}, \quad \bar{\phi}_2 = \phi_2, \quad \bar{\phi}_3 = \phi_3. 
\end{equation}

The crucial remark is that, \emph{in this coordinate system, the last two columns of $\bar{S}$ and $\bar{\tilde{S}}$ coincide}. We will use this observation in the next section to finish the proof of our uniqueness result for the inverse problem.


\subsection{The elliptic PDE on the conformal factor} \label{III4}

Working in the coordinate system $(u^1,u^2,u^3)$ given by (\ref{U}), we see that the metrics $\bar{G}$ can be written as
$$
  \bar{G} = \alpha \, g_0, \quad \alpha = \bar{c}^4 \det \bar{S}, \quad g_0 = \frac{1}{\bar{s}^{11}}(du^1)^2 + \frac{1}{\bar{s}^{21}}(du^2)^2 + \frac{1}{\bar{s}^{31}}(du^3)^2, 
$$
and a corresponding expression for $\bar{\tilde{G}}$ holds. Notice from (\ref{g2}) and (\ref{g4}) that
\begin{equation} \label{h1}
  g_0 = \tilde{g_0}.
\end{equation}
Thus it only remains to prove that $\alpha = \tilde{\alpha}$ in order to show that $\bar{G} = \bar{\tilde{G}}$. For this, we use

\begin{lemma}
The conformal factor $\alpha$ satisfies the elliptic PDE
\begin{equation} \label{h2}
  -\Delta_{g_0} \alpha - Q_{g_0,\bar{\phi}_i} \alpha = 0, 
\end{equation}
where
$$
  Q_{g_0,\bar{\phi}_i} =\sum_{i=1}^3 g_0^{ii} \left[ \frac{\partial^2_{ii} \log \det g_0}{4}  + \frac{\partial_{i} \log \det g_0}{8} + \frac{( \partial_{i} \log \det g_0)^2}{16} + \bar{\phi}_i \right].  
$$
\end{lemma}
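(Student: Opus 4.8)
The plan is to derive the PDE (\ref{h2}) satisfied by $\alpha$ directly from the defining PDE (\ref{PDEc}) on the conformal factor $c$, by rewriting everything in terms of the metric $g_0$ and the quantity $\alpha = \bar c^4 \det \bar S$. First I would record the relation between the St\"ackel metric $\bar g$ and the metric $g_0$: since $\bar h_i^2 = \det\bar S / \bar s^{i1}$ and $g_0^{ii} = \bar s^{i1}$ (up to the obvious relabelling forced by the first column of $\bar S$ being $(s_{12},\dots)$ — here I follow the normalization from (\ref{g2})), we have $\bar g = (\det \bar S)\, g_0$, i.e. $\bar g$ is conformal to $g_0$ with conformal factor $\det\bar S$. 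Consequently $\bar G = \bar c^4\,\bar g = \bar c^4 (\det\bar S)\, g_0 = \alpha\, g_0$, which is the decomposition stated just before the Lemma.

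Next I would substitute $c = \bar c = (\alpha / \det\bar S)^{1/4}$ into (\ref{PDEc}) and transform the Laplace--Beltrami operator $\Delta_{\bar g}$ into $\Delta_{g_0}$. For this I would use two standard conformal-change formulas in dimension $3$: (i) for a conformal rescaling $\bar g = \rho\, g_0$ (with $\rho = \det\bar S$), one has an explicit expression for $\Delta_{\bar g}$ in terms of $\Delta_{g_0}$, $d\log\rho$, and $d$; and (ii) the behaviour of $\Delta$ under multiplying the unknown function by a power of a positive function, i.e. $\Delta_{g}(fv) = f\Delta_g v + 2\langle df, dv\rangle_g + v\,\Delta_g f$. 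Applying these to $\bar c = (\alpha/\rho)^{1/4}$ and collecting terms, the first-order (gradient) terms in $\alpha$ should cancel — this cancellation is exactly what makes $\alpha$ satisfy a pure Schr\"odinger-type equation $-\Delta_{g_0}\alpha - Q\,\alpha = 0$ with no first-order part — and the zeroth-order coefficient reorganizes into the stated $Q_{g_0,\bar\phi_i}$. Here I would also use that $\det\bar g = (\det\bar S)^3 \det g_0$ in dimension $3$, so that $\partial_i\log\det\bar g$ and $\partial_i\log\det\bar S$ differ in a controlled way, and that the contracted Christoffel symbols $\bar\gamma_i$ of $\bar g$ can be written in terms of $\partial_i\log\det g_0$ and $\partial_i\log\det\bar S$; matching these against the combination $\bar\phi_i + \tfrac14\bar\gamma_i^2 - \tfrac12\partial_i\bar\gamma_i$ appearing in (\ref{PDEc}) is what produces the precise coefficients $\tfrac14,\tfrac18,\tfrac1{16}$ in $Q$.

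The main obstacle is the bookkeeping in this second step: one has to carry through the two nested conformal/product substitutions and verify that \emph{all} gradient terms in $\alpha$ genuinely cancel, which requires the first-column structure of $\bar S$ (so that $\partial_i \bar s^{i1} = 0$, hence $\partial_i g_0^{ii} = 0$, a fact already exploited elsewhere in the paper) and the St\"ackel identity relating $\det\bar S$, the cofactors $\bar s^{i1}$, and the diagonal metric coefficients. I would organize the computation so that the coefficient of $\partial_i\alpha$ is exhibited as a single expression that vanishes identically by these structural facts, and then read off $Q$ from the coefficient of $\alpha$. Finally I would remark that since, by (\ref{Phi23=}), (\ref{s13phi1=}) and (\ref{h1}), the data $g_0 = \tilde g_0$ and $\bar\phi_i = \bar{\tilde\phi}_i$ coincide for the two manifolds, the operator $-\Delta_{g_0} - Q_{g_0,\bar\phi_i}$ is the \emph{same} for $\alpha$ and $\tilde\alpha$; this is the point of the Lemma, and it sets up the unique continuation argument (using the boundary data (\ref{G=}) and (\ref{Gnu=})) that concludes $\alpha = \tilde\alpha$ in the remainder of Section \ref{III4}.
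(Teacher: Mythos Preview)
Your plan is sound and would work, but it is more roundabout than the paper's argument. The paper does not go through the second-order PDE on $\bar c$ together with conformal-change formulas for $\Delta_{\bar g}$ versus $\Delta_{g_0}$; instead it returns to the equivalent first-order form, the generalized Robertson condition
\[
  \sum_{i=1}^3 \bar H_i^{-2}\Big(\tfrac12\partial_i\bar\Gamma_i - \tfrac14\bar\Gamma_i^{\,2} - \bar\phi_i\Big)=0,
\]
and substitutes $\bar H_i^2 = \alpha/\bar s^{\,i1}$ and $\bar\Gamma_i = -\partial_i\log(\bar H_1\bar H_2\bar H_3/\bar H_i^2)$ directly. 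Since $\bar H_1\bar H_2\bar H_3/\bar H_i^2 = \alpha^{1/2}\bar s^{\,i1}/(\bar s^{11}\bar s^{21}\bar s^{31})^{1/2}$ and $\partial_i\bar s^{\,i1}=0$, the $\bar\Gamma_i$ become simple expressions in $\partial_i\log\alpha$ and $\partial_i\log(\bar s^{11}\bar s^{21}\bar s^{31})$, and one lands almost immediately on the identity $\sum_i \bar s^{\,i1}\big(-\partial_{ii}^2\alpha + (\text{first order})\big)+Q\alpha=0$, which is then recognized as $-\Delta_{g_0}\alpha - Q_{g_0,\bar\phi_i}\alpha=0$ using $g_0^{ii}=\bar s^{\,i1}$ and $\det g_0 = (\bar s^{11}\bar s^{21}\bar s^{31})^{-1}$.

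So the difference is purely one of bookkeeping economy: by working with the Robertson condition rather than the Laplace--Beltrami form, the paper avoids the nested conformal/product substitutions you describe. One small imprecision in your sketch: the first-order terms in $\alpha$ do not literally cancel; rather they assemble exactly into the first-order part of $\Delta_{g_0}$ (coming from $\partial_i\log\det g_0$), so that no \emph{extra} gradient term survives outside the Laplacian. Your final paragraph on why the Lemma matters (same operator for $\alpha$ and $\tilde\alpha$, then unique continuation) is exactly the paper's use of it.
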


\begin{proof}
We start from the PDE (\ref{g4}) satisfied by the conformal factor $\bar{c}$ and we recall that this PDE comes from the generalized Robertson-Condition (\ref{GenRob}), \textit{i.e.}
$$
  \sum_{i=1}^3 \bar{H}_i^{-2} \left( \frac{\partial_i \bar{\Gamma}_i}{2} - \frac{(\bar{\Gamma}_i)^2}{4} - \bar{\phi}_i \right) = 0. 
$$
Recalling that $\ds \bar{H}_i^2 = \frac{\alpha}{\bar{s^{i1}}}$, a direct calculation shows that $\alpha$ satisfies
\begin{equation} \label{h3}
	\sum_{i=1}^3 s^{i1} \left( -\partial_{ii}^2 \alpha - \frac{1}{2} \partial_i \alpha \right) + Q \alpha = 0, 
\end{equation}
where
\begin{equation} \label{h4}	
  Q = \sum_{i=1}^3 s^{i1} \left( \frac{ \partial_{ii}^2 \log s^{11} s^{21} s^{31}}{4} + \frac{ \partial_{i} \log s^{11} s^{21} s^{31}}{8} - \frac{ (\partial_{i} \log s^{11} s^{21} s^{31})^2}{16} - \bar{\phi}_i \right). 	
\end{equation}
We observe then that
$$
  s^{11} s^{21} s^{31} = \frac{1}{\det g_0}, \quad \sum_{i=1}^3 s^{i1} \left( -\partial_{ii}^2 \alpha - \frac{1}{2} \partial_i \alpha \right) = -\Delta_{g_0}. 
$$
Hence we obtain easily from (\ref{h3}) and (\ref{h4}) the elliptic PDE (\ref{h2}) satisfied by $\alpha$. 
\end{proof}

Thanks to (\ref{e12}), (\ref{s13phi1=}), (\ref{g5}) and (\ref{h1}), we see that the conformal factors $\alpha$ and $\tilde{\alpha}$ satisfy the \emph{same} second order elliptic PDE (\ref{h2}). Recalling that the last two columns of the St\"ackel matrices (\ref{g2}) coincide, it is easy to see from the definitions
$$
  \alpha = \bar{c}^4 \det \bar{S}, \quad \tilde{\alpha} = \bar{\tilde{c}}^4 \det \bar{\tilde{S}},
$$
and from the boundary determination results from Section \ref{III2} (see in particular (\ref{c13}) and (\ref{d2})), that $\alpha$ and $\tilde{\alpha}$ have the same Cauchy data at $u^1 = 0$, \textit{i.e.}
$$
  \alpha(0) = \tilde{\alpha}(0), \quad \dot{\alpha}(0) = \dot{\tilde{\alpha}}(0). 
$$
Hence, classical unique continuation (see for instance \cite{HoI2013}) gives
$$
  \alpha = \tilde{\alpha}, \quad \textrm{on} \ \Omega.
$$	
Consequently, we have shown
$$
  \bar{G} = \bar{\tilde{G}},
$$
or equivalently,
$$
  G = \tilde{G}, \quad \textrm{up to a change of variables of the form (\ref{CV})}.
$$
This finishes the proof of our uniqueness result in the anisotropic Calder\'on problem on three-dimensional conformally St\"ackel cylinders.


\Section{Some perspectives}

The results in this paper could be extended in several directions. \\

\noindent \textbf{1}. There exists a theory of \emph{non-orthogonal} St\"ackel manifolds (in the sense that the metrics are non-diagonal) for which the HJ and Helmholtz equations admit a complete set of classical separated solutions at all energies \cite{BCR2002a, BCR2002b, KM1983, KM1983b}. In particular, these non-orthogonal St\"ackel manifolds contain (and generalize enormously) the well-known family of Kerr black holes in General Relativity and their Riemannian counterparts. It would be interesting 

a) to extend this theory to the case of HJ and Helmholtz equation at fixed energy using the notion of $R$-separability following the lines of \cite{CR2006}, 

b) to address the question of uniqueness for the anisotropic Calder\'on problem in this non-orthogonal setting.  \\

\noindent \textbf{2}. The methods employed in this paper should work in more general situations in which the Laplace equation could be separated with respect to one variable only (and not all the variables as in the present case). Such models have been studied recently by us in \cite{DKN2019d} and named conformally Painlev\'e manifolds. This class of manifolds contains Riemannian manifolds of dimension $n$ for which the geodesic flow is not completely integrable, but rather possesses $1 \leq r < n-1$ \emph{hidden symmetries}, that is conformal Killing tensors of rank two satisfying certain additional assumptions. In such manifolds, the HJ and Laplace equations can be separated in groups of variables, leading to $r$ coupled PDEs. 

We intend in the near future to study the anisotropic Calder\'on problem on conformally Painlevé manifolds. \\



\bibliographystyle{plain}
\bibliography{Biblio}

\begin{thebibliography}{10}

\bibitem{ALP2005}
Kari Astala, Lassi P\"aiv\"arinta, and Matti Lassas.
\newblock Calder\'on's inverse problem for anisotropic conductivity in the
  plane.
\newblock {\em Comm. Partial Differential Equations}, 30(1-3):207--224, 2005.

\bibitem{BCR2002a}
Sergio Benenti, Claudia~Maria Chanu, and Giovanni Rastelli.
\newblock Remarks on the connection between the additive separation of the
  {H}amilton-{J}acobi equation and the multiplicative separation of the
  {S}chr\"{o}dinger equation. {I}. {T}he completeness and {R}obertson
  conditions.
\newblock {\em J. Math. Phys.}, 43(11):5183--5222, 2002.

\bibitem{BCR2002b}
Sergio Benenti, Claudia~Maria Chanu, and Giovanni Rastelli.
\newblock Remarks on the connection between the additive separation of the
  {H}amilton-{J}acobi equation and the multiplicative separation of the
  {S}chr\"{o}dinger equation. {II}. {F}irst integrals and symmetry operators.
\newblock {\em J. Math. Phys.}, 43(11):5223--5253, 2002.

\bibitem{BCR2005}
Sergio Benenti, Claudia~Maria Chanu, and Giovanni Rastelli.
\newblock Variable-separation theory for the null {H}amilton-{J}acobi equation.
\newblock {\em J. Math. Phys.}, 46(4):042901, 29, 2005.

\bibitem{Be2001}
Christer Bennewitz.
\newblock A proof of the local {B}org-{M}archenko theorem.
\newblock {\em Comm. Math. Phys.}, 218(1):131--132, 2001.

\bibitem{Ber1978}
Bo~Berndtsson.
\newblock Zeros of analytic functions of several variables.
\newblock {\em Ark. Mat.}, 16(2):251--262, 1978.

\bibitem{Bl1990}
Thomas Bloom.
\newblock A spanning set for {${ C}(I^n)$}.
\newblock {\em Trans. Amer. Math. Soc.}, 321(2):741--759, 1990.

\bibitem{Bo1946}
G\"oran Borg.
\newblock Eine {U}mkehrung der {S}turm-{L}iouvilleschen {E}igenwertaufgabe.
  {B}estimmung der {D}ifferentialgleichung durch die {E}igenwerte.
\newblock {\em Acta Math.}, 78:1--96, 1946.

\bibitem{Bo1952}
G\"oran Borg.
\newblock Uniqueness theorems in the spectral theory of
  {$y''+(\lambda-q(x))y=0$}.
\newblock In {\em Den 11te {S}kandinaviske {M}atematikerkongress, {T}rondheim,
  1949}, pages 276--287. Johan Grundt Tanums Forlag, Oslo, 1952.

\bibitem{Cal1980}
Alberto-P. Calder\'{o}n.
\newblock On an inverse boundary value problem.
\newblock In {\em Seminar on {N}umerical {A}nalysis and its {A}pplications to
  {C}ontinuum {P}hysics ({R}io de {J}aneiro, 1980)}, pages 65--73. Soc. Brasil.
  Mat., Rio de Janeiro, 1980.

\bibitem{CR2006}
Claudia~Maria Chanu and Giovanni Rastelli.
\newblock Fixed energy {$R$}-separation for {S}chr\"{o}dinger equation.
\newblock {\em Int. J. Geom. Methods Mod. Phys.}, 3(3):489--508, 2006.

\bibitem{CdV1979}
Yves Colin~de Verdi\`ere.
\newblock Spectre conjoint d'op\'{e}rateurs pseudo-diff\'{e}rentiels qui
  commutent. {I}. {L}e cas non int\'{e}grable.
\newblock {\em Duke Math. J.}, 46(1):169--182, 1979.

\bibitem{CdV1980}
Yves Colin~de Verdi\`ere.
\newblock Spectre conjoint d'op\'{e}rateurs pseudo-diff\'{e}rentiels qui
  commutent. {II}. {L}e cas int\'{e}grable.
\newblock {\em Math. Z.}, 171(1):51--73, 1980.

\bibitem{DGN2016}
Thierry Daud\'e, Damien Gobin, and Fran\c{c}ois Nicoleau.
\newblock Local inverse scattering at fixed energy in spherically symmetric
  asymptotically hyperbolic manifolds.
\newblock {\em Inverse Probl. Imaging}, 10(3):659--688, 2016.

\bibitem{DKN2019a}
Thierry Daud\'e, Niky Kamran, and Fran\c{c}ois Nicoleau.
\newblock Non-uniqueness results for the anisotropic calder\'on problem with
  data measured on disjoint sets.
\newblock {\em Annales de l'Institut Fourier}, 69(1):119--170, 2019.

\bibitem{DKN2019c}
Thierry Daud\'{e}, Niky Kamran, and Fran\c{c}ois Nicoleau.
\newblock On non-uniqueness for the anisotropic calder\'on problem with partial
  data.
\newblock {\em preprint arXiv:1901.10467}, 2019.

\bibitem{DKN2019b}
Thierry Daud\'{e}, Niky Kamran, and Fran\c{c}ois Nicoleau.
\newblock On the hidden mechanism behind non-uniqueness for the anisotropic
  {C}alder\'{o}n problem with data on disjoint sets.
\newblock {\em Ann. Henri Poincar\'{e}}, 20(3):859--887, 2019.

\bibitem{DKN2019d}
Thierry Daud\'{e}, Niky Kamran, and Fran\c{c}ois Nicoleau.
\newblock Separability and symmetry operators for painlev\'{e} metrics and
  their conformal deformations.
\newblock {\em preprint arXiv:1903.10573}, 2019.

\bibitem{DKN2015}
Thierry Daud\'e, Niky Kamran, and Francois Nicoleau.
\newblock Inverse scattering at fixed energy on asymptotically hyperbolic
  {L}iouville surfaces.
\newblock {\em Inverse Problems}, 31(12):125009, 37, 2015.

\bibitem{DKN2018a}
Thierry Daud\'{e}, Niky Kamran, and Francois Nicoleau.
\newblock A survey of non-uniqueness results for the anisotropic {C}alder\'{o}n
  problem with disjoint data.
\newblock In {\em Nonlinear analysis in geometry and applied mathematics.
  {P}art 2}, volume~2 of {\em Harv. Univ. Cent. Math. Sci. Appl. Ser. Math.},
  pages 77--101. Int. Press, Somerville, MA, 2018.

\bibitem{DKN2018b}
Thierry Daud\'e, Niky Kamran, and Francois Nicoleau.
\newblock The anisotropic {C}alder\'on problem for singular metrics of warped
  product type: the borderline between uniqueness and invisibility.
\newblock {\em to appear in Journal of spectral theory}, 2019.

\bibitem{DN2011}
Thierry Daud\'e and Fran\c{c}ois Nicoleau.
\newblock Inverse scattering at fixed energy in de
  {S}itter-{R}eissner-{N}ordstr\"om black holes.
\newblock {\em Ann. Henri Poincar\'e}, 12(1):1--47, 2011.

\bibitem{DN2016}
Thierry Daud\'{e} and Fran\c{c}ois Nicoleau.
\newblock Local inverse scattering at a fixed energy for radial
  {S}chr\"{o}dinger operators and localization of the {R}egge poles.
\newblock {\em Ann. Henri Poincar\'{e}}, 17(10):2849--2904, 2016.

\bibitem{DN2017}
Thierry Daud\'e and Fran\c{c}ois Nicoleau.
\newblock Direct and inverse scattering at fixed energy for massless charged
  {D}irac fields by {K}err-{N}ewman-de {S}itter black holes.
\newblock {\em Mem. Amer. Math. Soc.}, 247(1170):iv+113, 2017.

\bibitem{DeAR1965}
Vittorio de~Alfaro and Tullio Regge.
\newblock {\em Potential scattering}.
\newblock North-Holland Publishing Co., Amsterdam; Interscience Publishers
  Division John Wiley \& Sons, Inc., New York, 1965.

\bibitem{DLU2017}
Youjun Deng, Hongyu Liu, and Gunther Uhlmann.
\newblock Full and partial cloaking in electromagnetic scattering.
\newblock {\em Arch. Ration. Mech. Anal.}, 223(1):265--299, 2017.

\bibitem{DSFKSU2009}
David Dos Santos~Ferreira, Carlos~E. Kenig, Mikko Salo, and Gunther Uhlmann.
\newblock Limiting {C}arleman weights and anisotropic inverse problems.
\newblock {\em Invent. Math.}, 178(1):119--171, 2009.

\bibitem{DSFKLS2016}
David Dos Santos~Ferreira, Yaroslav Kurylev, Matti Lassas, and Mikko Salo.
\newblock The {C}alder\'on problem in transversally anisotropic geometries.
\newblock {\em J. Eur. Math. Soc. (JEMS)}, 18(11):2579--2626, 2016.

\bibitem{DSFKLLS2017}
David Dos Santos~Ferreira, Yaroslav Kurylev, Tony Liimaitanen, Matti Lassas,
  and Mikko Salo.
\newblock The linearized {C}alder\'on problem in transversally anisotropic
  geometries.
\newblock {\em preprint arXiv:1712.04716v1}, 2017.

\bibitem{Ei1934}
Luther~Pfahler Eisenhart.
\newblock Separable systems of {S}tackel.
\newblock {\em Ann. of Math. (2)}, 35(2):284--305, 1934.

\bibitem{Ev2010}
Lawrence~C. Evans.
\newblock {\em Partial differential equations}, volume~19 of {\em Graduate
  Studies in Mathematics}.
\newblock American Mathematical Society, Providence, RI, second edition, 2010.

\bibitem{GS2000}
Fritz Gesztesy and Barry Simon.
\newblock On local {B}org-{M}archenko uniqueness results.
\newblock {\em Comm. Math. Phys.}, 211(2):273--287, 2000.

\bibitem{Go2018}
Damien Gobin.
\newblock Inverse scattering at fixed energy on three-dimensional
  asymptotically hyperbolic {S}t\"{a}ckel manifolds.
\newblock {\em Publ. Res. Inst. Math. Sci.}, 54(2):245--316, 2018.

\bibitem{GKLU2007}
Allan Greenleaf, Yaroslav Kurylev, Matti Lassas, and Gunther Uhlmann.
\newblock Full-wave invisibility of active devices at all frequencies.
\newblock {\em Comm. Math. Phys.}, 275(3):749--789, 2007.

\bibitem{GKLU2009a}
Allan Greenleaf, Yaroslav Kurylev, Matti Lassas, and Gunther Uhlmann.
\newblock Cloaking devices, electromagnetic wormholes, and transformation
  optics.
\newblock {\em SIAM Rev.}, 51(1):3--33, 2009.

\bibitem{GKLU2009b}
Allan Greenleaf, Yaroslav Kurylev, Matti Lassas, and Gunther Uhlmann.
\newblock Invisibility and inverse problems.
\newblock {\em Bull. Amer. Math. Soc. (N.S.)}, 46(1):55--97, 2009.

\bibitem{GLU2003}
Allan Greenleaf, Matti Lassas, and Gunther Uhlmann.
\newblock On nonuniqueness for {C}alder\'{o}n's inverse problem.
\newblock {\em Math. Res. Lett.}, 10(5-6):685--693, 2003.

\bibitem{GSB2009}
Colin Guillarmou and Ant\^onio S\'a~Barreto.
\newblock Inverse problems for {E}instein manifolds.
\newblock {\em Inverse Probl. Imaging}, 3(1):1--15, 2009.

\bibitem{GT2013}
Colin Guillarmou and Leo Tzou.
\newblock The {C}alder\'on inverse problem in two dimensions.
\newblock In {\em Inverse problems and applications: inside out. {II}},
  volume~60 of {\em Math. Sci. Res. Inst. Publ.}, pages 119--166. Cambridge
  Univ. Press, Cambridge, 2013.

\bibitem{HSB2015}
Raphael Hora and Ant\^{o}nio S\'{a}~Barreto.
\newblock Inverse scattering with partial data on asymptotically hyperbolic
  manifolds.
\newblock {\em Anal. PDE}, 8(3):513--559, 2015.

\bibitem{HSB2018}
Raphael Hora and Ant\^{o}nio S\'{a}~Barreto.
\newblock Inverse scattering with disjoint source and observation sets on
  asymptotically hyperbolic manifolds.
\newblock {\em Comm. Partial Differential Equations}, 43(9):1363--1376, 2018.

\bibitem{HoI2013}
Lars H\"ormander.
\newblock {\em The analysis of linear partial differential operators. {I}}.
\newblock Classics in Mathematics. Springer-Verlag, Berlin, 2003.
\newblock Distribution theory and Fourier analysis, Reprint of the second
  (1990) edition [Springer, Berlin; MR1065993 (91m:35001a)].

\bibitem{Iso2004a}
Hiroshi Isozaki.
\newblock Inverse problems and hyperbolic manifolds.
\newblock In {\em Inverse problems and spectral theory}, volume 348 of {\em
  Contemp. Math.}, pages 181--197. Amer. Math. Soc., Providence, RI, 2004.

\bibitem{Iso2004b}
Hiroshi Isozaki.
\newblock Inverse spectral problems on hyperbolic manifolds and their
  applications to inverse boundary value problems in {E}uclidean space.
\newblock {\em Amer. J. Math.}, 126(6):1261--1313, 2004.

\bibitem{Iso2007a}
Hiroshi Isozaki.
\newblock Inverse boundary value problems in the horosphere---a link between
  hyperbolic geometry and electrical impedance tomography.
\newblock {\em Inverse Probl. Imaging}, 1(1):107--134, 2007.

\bibitem{Iso2007b}
Hiroshi Isozaki.
\newblock The {$\overline{\partial}$}-theory for inverse problems associated
  with {S}chr\"{o}dinger operators on hyperbolic spaces.
\newblock {\em Publ. Res. Inst. Math. Sci.}, 43(1):201--240, 2007.

\bibitem{IK2014}
Hiroshi Isozaki and Yaroslav Kurylev.
\newblock {\em Introduction to spectral theory and inverse problem on
  asymptotically hyperbolic manifolds}, volume~32 of {\em MSJ Memoirs}.
\newblock Mathematical Society of Japan, Tokyo, 2014.

\bibitem{IKL2017}
Hiroshi Isozaki, Yaroslav Kurylev, and Matti Lassas.
\newblock Conic singularities, generalized scattering matrix, and inverse
  scattering on asymptotically hyperbolic surfaces.
\newblock {\em J. Reine Angew. Math.}, 724:53--103, 2017.

\bibitem{JSB2000}
Mark~S. Joshi and Ant\^{o}nio S\'{a}~Barreto.
\newblock Inverse scattering on asymptotically hyperbolic manifolds.
\newblock {\em Acta Math.}, 184(1):41--86, 2000.

\bibitem{KM1980}
Ernest.~G. Kalnins and Willard Miller, Jr.
\newblock Killing tensors and variable separation for {H}amilton-{J}acobi and
  {H}elmholtz equations.
\newblock {\em SIAM J. Math. Anal.}, 11(6):1011--1026, 1980.

\bibitem{KM1982}
Ernest.~G. Kalnins and Willard. Miller, Jr.
\newblock Intrinsic characterisation of orthogonal {$R$} separation for
  {L}aplace equations.
\newblock {\em J. Phys. A}, 15(9):2699--2709, 1982.

\bibitem{KM1983}
Ernest.~G. Kalnins and Willard Miller, Jr.
\newblock Conformal {K}illing tensors and variable separation for
  {H}amilton-{J}acobi equations.
\newblock {\em SIAM J. Math. Anal.}, 14(1):126--137, 1983.

\bibitem{KM1984}
Ernest.~G. Kalnins and Willard. Miller, Jr.
\newblock The theory of orthogonal {$R$}-separation for {H}elmholtz equations.
\newblock {\em Adv. in Math.}, 51(1):91--106, 1984.

\bibitem{KM1983b}
Ernest.~G. Kalnins and Willard.~and Miller, Jr.
\newblock The general theory of {$R$}-separation for {H}elmholtz equations.
\newblock {\em J. Math. Phys.}, 24(5):1047--1053, 1983.

\bibitem{KY2002}
Hyeonbae Kang and Kihyun Yun.
\newblock Boundary determination of conductivities and {R}iemannian metrics via
  local {D}irichlet-to-{N}eumann operator.
\newblock {\em SIAM J. Math. Anal.}, 34(3):719--735, 2002.

\bibitem{KS2014}
Carlos Kenig and Mikko Salo.
\newblock Recent progress in the {C}alder\'on problem with partial data.
\newblock In {\em Inverse problems and applications}, volume 615 of {\em
  Contemp. Math.}, pages 193--222. Amer. Math. Soc., Providence, RI, 2014.

\bibitem{LLS2018}
Matti Lassas, Tony Liimaitanen, and Mikko Salo.
\newblock The {P}oisson embedding approach to the {C}alder\'on problem.
\newblock {\em preprint arXiv:1806.09954v1}, 2018.

\bibitem{LTU2003}
Matti Lassas, Michael Taylor, and Gunther Uhlmann.
\newblock The {D}irichlet-to-{N}eumann map for complete {R}iemannian manifolds
  with boundary.
\newblock {\em Comm. Anal. Geom.}, 11(2):207--221, 2003.

\bibitem{LU2001}
Matti Lassas and Gunther Uhlmann.
\newblock On determining a {R}iemannian manifold from the
  {D}irichlet-to-{N}eumann map.
\newblock {\em Ann. Sci. \'Ecole Norm. Sup. (4)}, 34(5):771--787, 2001.

\bibitem{LeU1989}
John~M. Lee and Gunther Uhlmann.
\newblock Determining anisotropic real-analytic conductivities by boundary
  measurements.
\newblock {\em Comm. Pure Appl. Math.}, 42(8):1097--1112, 1989.

\bibitem{Lev1996}
Boris.~Ya. Levin.
\newblock {\em Lectures on entire functions}, volume 150 of {\em Translations
  of Mathematical Monographs}.
\newblock American Mathematical Society, Providence, RI, 1996.
\newblock In collaboration with and with a preface by Yu. Lyubarskii, M. Sodin
  and V. Tkachenko, Translated from the Russian manuscript by Tkachenko.

\bibitem{Lo1968}
Jean-Jacques Loeffel.
\newblock On an inverse problem in potential scattering theory.
\newblock {\em Ann. Inst. H. Poincar\'{e} Sect. A (N.S.)}, 8:339--447, 1968.

\bibitem{Mil1988}
Willard Miller, Jr.
\newblock Mechanisms for variable separation in partial differential equations
  and their relationship to group theory.
\newblock In {\em Symmetries and nonlinear phenomena ({P}aipa, 1988)}, volume~9
  of {\em CIF Ser.}, pages 188--221. World Sci. Publ., Teaneck, NJ, 1988.

\bibitem{New2002}
Roger~G. Newton.
\newblock {\em Scattering theory of waves and particles}.
\newblock Dover Publications, Inc., Mineola, NY, 2002.
\newblock Reprint of the 1982 second edition [Springer, New York; MR0666397
  (84f:81001)], with list of errata prepared for this edition by the author.

\bibitem{PT1987}
J\"urgen P\"oschel and Eugene Trubowitz.
\newblock {\em Inverse spectral theory}, volume 130 of {\em Pure and Applied
  Mathematics}.
\newblock Academic Press, Inc., Boston, MA, 1987.

\bibitem{Ra1999}
Alexander.~G. Ramm.
\newblock An inverse scattering problem with part of the fixed-energy phase
  shifts.
\newblock {\em Comm. Math. Phys.}, 207(1):231--247, 1999.

\bibitem{Re1959}
Tullio Regge.
\newblock Introduction to complex orbital momenta.
\newblock {\em Nuovo Cimento (10)}, 14:951--976, 1959.

\bibitem{Rob1928}
Howard.~P. Robertson.
\newblock Bemerkung \"{u}ber separierbare {S}ysteme in der {W}ellenmechanik.
\newblock {\em Math. Ann.}, 98(1):749--752, 1928.

\bibitem{SB2005}
Ant\^{o}nio S\'{a}~Barreto.
\newblock Radiation fields, scattering, and inverse scattering on
  asymptotically hyperbolic manifolds.
\newblock {\em Duke Math. J.}, 129(3):407--480, 2005.

\bibitem{Sa2013}
Mikko Salo.
\newblock The {C}alder\'on problem on {R}iemannian manifolds.
\newblock In {\em Inverse problems and applications: inside out. {II}},
  volume~60 of {\em Math. Sci. Res. Inst. Publ.}, pages 167--247. Cambridge
  Univ. Press, Cambridge, 2013.

\bibitem{Sta1893}
Paul St\"{a}ckel.
\newblock Ueber die {B}ewegung eines {P}unktes in einer {$n$}-fachen
  {M}annigfaltigkeit.
\newblock {\em Math. Ann.}, 42(4):537--563, 1893.

\bibitem{Tat1995}
Daniel Tataru.
\newblock Unique continuation for solutions to {PDE}'s; between
  {H}\"{o}rmander's theorem and {H}olmgren's theorem.
\newblock {\em Comm. Partial Differential Equations}, 20(5-6):855--884, 1995.

\bibitem{Tat2004}
Daniel Tataru.
\newblock Unique continuation problems for partial differential equations.
\newblock In {\em Geometric methods in inverse problems and {PDE} control},
  volume 137 of {\em IMA Vol. Math. Appl.}, pages 239--255. Springer, New York,
  2004.

\bibitem{Tay2011a}
Michael~E. Taylor.
\newblock {\em Partial differential equations {I}. {B}asic theory}, volume 115
  of {\em Applied Mathematical Sciences}.
\newblock Springer, New York, second edition, 2011.

\bibitem{Te2014}
Gerald Teschl.
\newblock {\em Mathematical methods in quantum mechanics}, volume 157 of {\em
  Graduate Studies in Mathematics}.
\newblock American Mathematical Society, Providence, RI, second edition, 2014.
\newblock With applications to Schr\"odinger operators.

\bibitem{Uh2009}
Gunther Uhlmann.
\newblock Electrical impedance tomography and {C}alder\'on's problem.
\newblock {\em Inverse Problems}, 25(12):123011, 39, 2009.

\bibitem{Uh2014}
Gunther Uhlmann.
\newblock Inverse problems: seeing the unseen.
\newblock {\em Bull. Math. Sci.}, 4(2):209--279, 2014.

\end{thebibliography}

\noindent \footnotesize{DEPARTEMENT DE MATHEMATIQUES. UMR CNRS 8088. UNIVERSITE DE CERGY-PONTOISE. 95302 CERGY-PONTOISE. FRANCE. \\
\emph{Email adress}: thierry.daude@u-cergy.fr \\

\noindent DEPARTMENT OF MATHEMATICS AND STATISTICS. MCGill UNIVERSITY. MONTREAL, QC, H3A 2K6, CANADA \\
\emph{Email adress}: nkamran@math.mcgill.ca \\

\noindent LABORATOIRE DE MATHEMATIQUES JEAN LERAY. UMR CNRS 6629. 2 RUE DE LA HOUSSINIERE BP 92208. F-44322 NANTES CEDEX 03 \\
\emph{Email adress}: francois.nicoleau@univ-nantes.fr}

\end{document}